\theoremstyle{definition}
\newtheorem{defi}{Definition}[section] 
\theoremstyle{plain}
\newtheorem{prop}[defi]{Proposition}
\newtheorem{theo}[defi]{Theorem}
\newtheorem{corollary}[defi]{Corollary}
\theoremstyle{remark}
\newtheorem{remark}[defi]{Remark}
\newtheorem{exemple}[defi]{Example}
\newtheorem{question}[defi]{Question}
\newcommand{\id}{\mathrm{id}}
\newcommand{\Id}{\mathrm{Id}}
\newcommand{\C}{\mathbb{C}}
\newcommand{\R}{\mathbb{R}}
\newcommand{\QQ}{\mathcal{Q}}
\newcommand{\Z}{\mathbb{Z}}
\newcommand{\N}{\mathbb{N}}
\newcommand{\F}{\mathbb{F}}
\newcommand{\NN}{\mathcal{N}}
\title[Link invariants from $L^2$-Burau maps of braids]{Link invariants from $L^2$-Burau maps of braids}
\author{Fathi Ben Aribi}
\address{
	UCLouvain, IRMP, Chemin du Cyclotron 2 \\
	1348 Louvain-la-Neuve \\
	Belgium}
\email{fathi.benaribi@uclouvain.be}
\subjclass[2020]{57K10; 57M05; 20F36; 47C15}
\keywords{$L^2$-torsion; braid groups; Fuglede--Kadison determinant; invariants of links; Markov moves; Fox calculus}
\begin{document}

\maketitle

\begin{abstract}

A previous work of A. Conway and the author introduced $L^2$-Burau maps of braids, which are generalizations of the Burau representation whose coefficients live in a more general group ring than the one of Laurent polynomials. This same work established that the $L^2$-Burau map of a braid at the group of the braid closure yields the $L^2$-Alexander torsion of the braid closure in question, as a variant of the well-known Burau--Alexander formula.

In the present paper, we generalize the previous result to $L^2$-Burau maps defined over all quotients of the group of the braid closure. The link invariants we obtain are twisted $L^2$-Alexander torsions of the braid closure, and recover more topological information, such as the hyperbolic volumes of Dehn fillings. 
The proof needs us to first generalize several fundamental formulas for $L^2$-torsions, which have their own independent interest.

We then discuss how likely we are to generalize this process to yet more groups. In particular, a detailed study of the influence of Markov moves on $L^2$-Burau maps and two explicit counter-examples to Markov invariance suggest that twisted $L^2$-Alexander torsions of links are  the only link invariants we can hope to build from $L^2$-Burau maps with the present approach.
\end{abstract}

\tableofcontents

\section{Introduction}

The $L^2$-Burau maps and reduced $L^2$-Burau maps of braids were introduced in 2016 in \cite{BAC} by A. Conway and the author. These maps generalize the Burau representation of braid groups (introduced in \cite{Bu}) and are indexed by a positive real number $t$ and a group epimorphism $\gamma$ starting at a free group $\F_n$ of finite rank. In a sense, all the $L^2$-Burau maps are contained between the Burau representation and the Artin injection of the braid group $B_n$ in the automorphism group $\mathrm{Aut}(\F_n)$ of the free group $\F_n$. Moreover, A. Conway and the author proved in \cite{BAC} that for a braid $\beta \in B_n$, its image by the reduced $L^2$-Burau map $\overline{\mathcal{B}}^{(2)}_{t,\gamma_\beta}$ associated to the epimorphism $\gamma_\beta\colon \F_n \to G_{\beta} \cong \pi_1\left ( S^3 \setminus \hat{\beta}\right )$ 
yields the \textit{$L^2$-Alexander torsion} of the braid closure $\hat{\beta}$.
The $L^2$-Alexander torsion is an invariant of links introduced by Li--Zhang and Dubois--Friedl--Lück \cite{LZ, DFL}, whose construction can be compared with those of the twisted Alexander polynomials, and  which detects various topological information. As a consequence of the main result of \cite{BAC}, the reduced $L^2$-Burau map $\overline{\mathcal{B}}^{(2)}_{t,\gamma_\beta}$ thus
contains deep topological information about $\beta$ such as the hyperbolic volume or the genus of the link $\hat{\beta}$.

It is then natural to wonder whether $L^2$-Burau maps associated to other epimorphisms can similarly provide link invariants and detect topological information of the braid, and this article provides a partial positive answer to this question.

In Section \ref{sec:markov:adm}, we introduce the notion of \textit{Markov-admissibility} of a family $\QQ$ of group epimorphisms $Q_{\beta}\colon \F_{n(\beta)} \twoheadrightarrow G_{Q_{\beta}}$ indexed by braids $\beta \in \sqcup_{n\geqslant 1} B_n$. Roughly speaking, we say that such a family is Markov-admissible when for any two braids $\alpha,\beta$ that have isotopic closures (and thus are related by Markov moves), the associated epimorphisms $Q_{\alpha}$ and $Q_{\beta}$ descend ``to the same depth'' and are related by a sequence of commutative diagrams. Markov admissibility appears to be a necessary condition in order to construct Markov functions and link invariants from general families of epimorphisms indexed by braids.

In this paper we focus on a specific candidate for being a Markov function, namely the function
$$
F_{\mathcal{Q}}:=
\begin{pmatrix}
&\sqcup_{n\geqslant 1} B_n &\to& \mathcal{F}(\R_{>0}, \R_{>0})/\{t \mapsto t^m, m \in \Z \} \\
&\beta &\mapsto & 
\left [t \mapsto 
\dfrac{
	\det^r_{G_{Q_\beta}}\left (
	\overline{\mathcal{B}}^{(2)}_{t,Q_{\beta}}
	(\beta) - \Id^{\oplus (n-1)}
	\right )
}{\max(1,t)^n}
\right ]
\end{pmatrix},
$$
where $\QQ$ is a Markov-admissible family of epimorphisms and $\det^r_{G}$ is the \textit{regular Fuglede--Kadison determinant} for the group $G$, a  version of the determinant for infinite-dimensional $G$-equivariant operators on $\ell^2(G)$ such as the $L^2$-Burau maps (see Section \ref{sec:prelim} for a definition).

The first main result of this article is the following theorem (stated here without technical details for readability):
\begin{theo}[Theorem \ref{thm:burau:alexander:twisted}, Propositions \ref{prop:first} and \ref{prop:second}]\label{thm:intro:invariance}
	Let $\QQ$ be a Markov-admissible family of epimorphisms that descends to the groups of the braid closures or deeper. 
	Then $F_{\mathcal{Q}}$
	is a Markov function, and thus defines an invariant of links.
		Moreover, this link invariant is a twisted $L^2$-Alexander torsion.
\end{theo}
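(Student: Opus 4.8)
The plan is to prove the three assertions in succession: invariance of $F_{\QQ}$ under the two Markov moves, an appeal to Markov's theorem to pass to a link invariant, and the identification of the resulting invariant with a twisted $L^2$-Alexander torsion via a Burau--Alexander-type formula. For \emph{conjugation invariance} (Proposition \ref{prop:first}), fix $\alpha,\beta\in B_n$. The Fox-calculus chain rule makes the reduced $L^2$-Burau map a crossed homomorphism, and Markov-admissibility of $\QQ$ provides the commutative diagram identifying $G_{Q_{\alpha\beta\alpha^{-1}}}$ with $G_{Q_\beta}$ compatibly with these maps; transporting everything to $\ell^2(G_{Q_\beta})$ through the induced isomorphism, $\overline{\mathcal{B}}^{(2)}_{t,Q_{\alpha\beta\alpha^{-1}}}(\alpha\beta\alpha^{-1})$ becomes conjugate to $M:=\overline{\mathcal{B}}^{(2)}_{t,Q_\beta}(\beta)$ by the invertible matrix $A$ built from $\alpha$. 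A group isomorphism induces a trace-preserving $*$-isomorphism of group von Neumann algebras and hence preserves regular Fuglede--Kadison determinants, so by multiplicativity of $\det^r$ and $A(M-\Id^{\oplus(n-1)})A^{-1}=AMA^{-1}-\Id^{\oplus(n-1)}$ we get $\det^r_{G_{Q_\beta}}\!\left(AMA^{-1}-\Id^{\oplus(n-1)}\right)=\det^r_{G_{Q_\beta}}(A)\,\det^r_{G_{Q_\beta}}\!\left(M-\Id^{\oplus(n-1)}\right)\,\det^r_{G_{Q_\beta}}(A^{-1})=\det^r_{G_{Q_\beta}}\!\left(M-\Id^{\oplus(n-1)}\right)$; since $n$ is unchanged the denominator $\max(1,t)^n$ is unchanged, so $F_{\QQ}(\alpha\beta\alpha^{-1})=F_{\QQ}(\beta)$.

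For \emph{stabilization invariance} (Proposition \ref{prop:second}), take $\beta\in B_n$ and $\beta\sigma_n^{\pm1}\in B_{n+1}$; their closures coincide, so Markov-admissibility forces $Q_{\beta\sigma_n^{\pm1}}$ to descend to (a quotient of) $\pi_1(S^3\setminus\widehat{\beta\sigma_n^{\pm1}})=\pi_1(S^3\setminus\hat\beta)$ compatibly with $Q_\beta$, whence the extra meridian $x_{n+1}$ is sent into the image of $Q_\beta$. Using the chain rule, $\overline{\mathcal{B}}^{(2)}_{t,Q_{\beta\sigma_n^{\pm1}}}(\beta\sigma_n^{\pm1})$ is the product of the inflated Burau matrix of $\beta$ with the Burau matrix of $\sigma_n^{\pm1}$, which differs from the identity only in its last $2\times2$ block; after row and column operations $\overline{\mathcal{B}}^{(2)}_{t,Q_{\beta\sigma_n^{\pm1}}}(\beta\sigma_n^{\pm1})-\Id^{\oplus n}$ becomes block-triangular with one diagonal block $\overline{\mathcal{B}}^{(2)}_{t,Q_\beta}(\beta)-\Id^{\oplus(n-1)}$ over $\ell^2(G_{Q_\beta})$ and a $1\times1$ diagonal block equal to an invertible scalar operator whose regular Fuglede--Kadison determinant is $\max(1,t)$ up to a power of $t$ (the power differing between the positive and negative stabilization). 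Multiplicativity of $\det^r$ on block-triangular operators then gives that the numerator of $F_{\QQ}(\beta\sigma_n^{\pm1})$ is the numerator of $F_{\QQ}(\beta)$ times $\max(1,t)$, up to $t^m$, which is exactly absorbed by passing from the denominator $\max(1,t)^n$ to $\max(1,t)^{n+1}$ and working in $\mathcal{F}(\R_{>0},\R_{>0})/\{t\mapsto t^m\}$ — which is precisely why that normalization and that quotient are built into $F_{\QQ}$. With both Markov moves verified, Markov's theorem shows that $F_{\QQ}$ descends to a well-defined invariant of links.

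Finally I would identify this invariant with a twisted $L^2$-Alexander torsion, which is Theorem \ref{thm:burau:alexander:twisted}. Equip $S^3\setminus\hat\beta$ with the standard braid-closure CW structure (one $0$-cell, $n$ meridional $1$-cells, $n-1$ relator $2$-cells); its cellular chain complex twisted by $(\phi,\rho)$, where $\phi\colon\pi_1(S^3\setminus\hat\beta)\to\Z$ is the abelianization and $\rho$ is the epimorphism through which $Q_\beta$ factors, has boundary operators given by the Fox Jacobian of the braid relation, that is, essentially by $\overline{\mathcal{B}}^{(2)}_{t,Q_\beta}(\beta)-\Id^{\oplus(n-1)}$ together with the $0$- and top-cell pieces. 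Its $L^2$-torsion is therefore $\det^r_{G_{Q_\beta}}\!\left(\overline{\mathcal{B}}^{(2)}_{t,Q_\beta}(\beta)-\Id^{\oplus(n-1)}\right)$ corrected by the $\max(1,t)^n$ factor coming from those extra cells, and matching this against the definition of the twisted $L^2$-Alexander torsion $\tau^{(2)}\!\left(S^3\setminus\hat\beta,\phi,\rho\right)(t)$ yields the formula (up to $\{t\mapsto t^m\}$) and, with the previous two paragraphs, identifies $F_{\QQ}$ as a twisted $L^2$-Alexander torsion. I expect this last step to be the main obstacle: the Burau--Alexander computation of \cite{BAC}, originally run over the regular epimorphism $\gamma_\beta$, must be reworked over the possibly non-injective, non-regular epimorphism $Q_\beta$, which forces one to first upgrade the foundational tools for $L^2$-torsion — simple-homotopy invariance, the change-of-group behaviour along $\rho$, and the multiplicativity and normalization formulas — to the twisted setting; this is the technical core of the paper, and also the place where the hypothesis that $\QQ$ descends at least to the groups of the braid closures is essential.
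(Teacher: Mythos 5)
Your overall architecture (invariance under both Markov moves, then identification with a twisted $L^2$-Alexander torsion after upgrading the $L^2$-torsion toolkit) matches the paper, but there is a genuine gap: you locate the use of the hypothesis that $\QQ$ descends to the groups of the braid closures only in the final torsion identification, whereas it is precisely what makes both Markov-invariance arguments work --- and without it they fail, since the identity and abelianization families are Markov-admissible yet $F_\QQ$ is not a Markov function for them (Theorems \ref{thm:contrex:abel} and \ref{thm:contrex:id}). Concretely, in the conjugation step the crossed-homomorphism formula (Proposition \ref{prop:L2burau:mult}) gives
\[
\overline{\mathcal{B}}^{(2)}_{t,\gamma}(\alpha^{-1}\beta\alpha)=\overline{\mathcal{B}}^{(2)}_{t,\gamma}(\alpha)\circ\overline{\mathcal{B}}^{(2)}_{t,\gamma\circ h_{\alpha}}(\beta)\circ\left(\overline{\mathcal{B}}^{(2)}_{t,\gamma\circ h_{\alpha^{-1}\beta\alpha}}(\alpha)\right)^{-1},
\]
so the two outer factors are Burau matrices of $\alpha$ evaluated at \emph{different} coefficient epimorphisms; this is an honest conjugation (your ``$AMA^{-1}$'') only when $\gamma\circ h_{\alpha^{-1}\beta\alpha}=\gamma$, i.e.\ only because $Q_{\alpha^{-1}\beta\alpha}$ kills the relations $h_{\alpha^{-1}\beta\alpha}(\star)\star^{-1}$; Markov-admissibility alone does not give this. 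Likewise, in the stabilization step the block-triangularity you invoke ``after row and column operations'' holds only because the off-diagonal entries $t^{j}R_{(Q_{\beta_\pm}\circ h_{\iota(\beta)})(g_j)}-t^{j}R_{Q_{\beta_\pm}(g_j)}$ vanish, which again uses $Q_{\beta_\pm}\circ h_{\beta_\pm}=Q_{\beta_\pm}$; and your claim that Markov-admissibility ``forces'' $Q_{\beta\sigma_n^{\pm1}}$ to descend to a quotient of $\pi_1(S^3\setminus\hat\beta)$ is false (same counterexamples).

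On the identification with the twisted $L^2$-Alexander torsion, your sketch computes the torsion from the presentation $2$-complex $W_P$ of $G_{\hat\beta}$ and matches it directly with the torsion of the link exterior; this requires $W_P$ to be simple homotopy equivalent to $M_{\hat\beta}$, which fails when $\hat\beta$ is split (then $W_P$ is not aspherical). The paper circumvents this by adjoining the braid axis $C_\beta$: the augmented link $L'=\hat\beta\cup C_\beta$ is never split, so $W_{P'}$ is simple homotopy equivalent to $M_{L'}$ (Proposition \ref{prop:simple}); the axis is then removed with the generalized Torres formula (Proposition \ref{prop:Torres}), which is also where the factor $\max(1,t)^n$ arises via the linking number $n$ with the axis, and $W'_*$ is compared with $W_*$ through a short exact sequence (Proposition \ref{prop:short}). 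Your final paragraph correctly identifies the needed generalizations of Section \ref{sec:appendix}, but the split case and the role of the axis must be handled for the argument to close, and the attribution of where the descent hypothesis enters must be corrected throughout.
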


Part of Theorem \ref{thm:intro:invariance} was already proven in \cite{BAC},  without discussing Markov invariance. Indeed, when $\QQ$ is the family that  descends to the groups of the braid closures $G_\beta$, \cite[Theorem 4.9]{BAC} directly linked $F_\QQ$ to the $L^2$-Alexander torsions of the braid closures, as a variant of the well-known Alexander--Burau formula \cite{Bu}; and since the $L^2$-Alexander torsions were already known link invariants, Markov invariance was thus reciprocally guaranteed, and carefully studying Markov moves was unnecessary. We generalize this approach to \textit{twisted} $L^2$-Alexander torsions in Theorem \ref{thm:burau:alexander:twisted}; to do this, we first need to establish slight generalizations of several formulas for $L^2$-torsions, in Section \ref{sec:appendix}.

Propositions \ref{prop:first} and \ref{prop:second} provide another way of proving that the twisted $L^2$-Alexander torsions are invariants of links, by studying how Markov moves on braids modify reduced $L^2$-Burau maps and by using properties of the Fuglede--Kadison determinant. Although their conclusions are redundant with the one of Theorem \ref{thm:burau:alexander:twisted}, these two propositions have independent interest for future potential constructions of link invariants from $L^2$-Burau maps.

Theorem \ref{thm:intro:invariance} is not an equivalence in the sense that $F_\QQ$ could theoretically be a Markov function for other families $\QQ$, but the specific convenient cancellations that occur in matrix coefficients in the proofs of 
Propositions \ref{prop:first} and \ref{prop:second}
 make this seem unlikely. As further evidence, we present two families $\QQ$ such that $F_\QQ$ is not a Markov function, in the second main result of this article:
\begin{theo}[Theorems \ref{thm:contrex:abel} and \ref{thm:contrex:id}]\label{thm:intro:counter}
	Let $\QQ$ be either the family of identities of the free groups or the family of abelianizations of the free groups.
	Then $F_{\mathcal{Q}}$
	is not a Markov function.
\end{theo}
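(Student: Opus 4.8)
The strategy is to produce, for each family $\QQ$, an explicit pair of braids related by a single Markov move on which $F_{\QQ}$ lands in two different classes of $\mathcal{F}(\R_{>0},\R_{>0})/\{t\mapsto t^m\}$; by definition this shows $F_{\QQ}$ is not a Markov function. It suffices to examine a stabilization $\alpha\in B_n\rightsquigarrow\alpha\sigma_n\in B_{n+1}$: one checks directly from the (anti-)multiplicativity of reduced $L^2$-Burau maps up to an automorphism twist, together with the fact that group automorphisms of $\F_n$ and of $\Z^n$ act by trace-preserving automorphisms of the relevant group von Neumann algebras, that $F_{\QQ}$ is already invariant under conjugation. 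I take the pair $\alpha=\sigma_1\in B_2$ and $\alpha\sigma_2=\sigma_1\sigma_2\in B_3$, both of which close to the unknot. Since the unknot has $L^2$-Alexander torsion $\max(1,t)^{-1}$, and a one-line $1\times 1$ computation valid for both families gives $F_{\QQ}(\sigma_1)(t)=\det^r_{G_{Q_{\sigma_1}}}\!\big(\overline{\mathcal{B}}^{(2)}_{t,Q_{\sigma_1}}(\sigma_1)-\Id\big)/\max(1,t)^2=\max(1,t)^{-1}$, the whole issue is to show that $F_{\QQ}(\sigma_1\sigma_2)$ is \emph{not} equal to $\max(1,t)^{-1}$ modulo monomials.

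Next I would spell out $\det^r_{G_Q}$ in the two cases. For $Q=$ abelianization, $G_Q=\Z^n$, $\mathcal{N}(\Z^n)=L^\infty(\mathbb{T}^n)$, and $\det^r_{\Z^n}$ of a square matrix over $\Z[\Z^n]$ is the logarithmic Mahler measure of its ordinary determinant polynomial, evaluated on the torus, the parameter $t$ already built into that polynomial through the reduced $L^2$-Burau map. Feeding the explicit generator formulas into the $2\times 2$ matrix $\overline{\mathcal{B}}^{(2)}_{t,Q}(\sigma_1\sigma_2)-\Id^{\oplus 2}$ over $\Z[\Z^3]$, one gets a determinant polynomial that, unlike the classical reduced Burau of $\sigma_1\sigma_2$ (where the $(1,1)$-entry of the reduced matrix minus identity is the collapse $0-1$ coming from $-t+t=0$), genuinely involves two independent torus variables: after a monomial change of coordinates it is a unit times $1+tu+t^2v$. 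The crucial nondegeneracy — that the degree-$1$ and degree-$2$ monomials occurring are not proportional — is automatic here, since they correspond to meridians of distinct strands along the closure cycle. For $Q=$ identity, $G_Q=\F_n$ and I would use the standard reduction principle for regular Fuglede--Kadison determinants over free groups (an operator supported on a free factor is computed inside that smaller free group), so that $\det^r_{\F_2}(\Id+tL_g)=\max(1,t)$ for $g$ of infinite order; a Schur-complement decomposition of the $2\times 2$ matrix then expresses $\det^r_{\F_3}\!\big(\overline{\mathcal{B}}^{(2)}_{t,Q}(\sigma_1\sigma_2)-\Id^{\oplus 2}\big)$, for small $t$, as a product of regular Fuglede--Kadison determinants of explicit elements of $\Z[\F_3]$ of the shape $\Id+t\,w_1+t^2\,w_2$.

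To exhibit the discrepancy I would restrict to small $t>0$, where $\max(1,t)^n=1$, so that $F_{\QQ}$ coincides with the bare regular Fuglede--Kadison determinant. Using $\log\det^r(X)=\tfrac12\tau\big(\log(X^*X)\big)$ and expanding $X^*X$ as a scalar plus a traceless part $P$ via $\tau\big(\log(c\,\Id+P)\big)=\log c-\tfrac{1}{2c^2}\tau(P^2)+O(P^3)$, one finds that the Taylor expansion of $F_{\QQ}(\sigma_1\sigma_2)(t)$ around $t=0$ begins $1+c\,t^4+O(t^6)$ with $c>0$ (for the identity family the computation gives $c=\tfrac34$), whereas $F_{\QQ}(\sigma_1)(t)\equiv 1$ for $t\leqslant 1$; since both functions tend to $1$ as $t\to 0$, they cannot differ by any monomial $t^m$, so $F_{\QQ}$ is not a Markov function. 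For the abelianization family one may alternatively evaluate at $t=1$, where the equivalence relation is trivial: $F_{\QQ}(\sigma_1)(1)=1$ while $F_{\QQ}(\sigma_1\sigma_2)(1)=\exp\big(m(1+u+v)\big)=\exp\!\big(\tfrac{3\sqrt3}{4\pi}L(\chi_{-3},2)\big)>1$ by Smyth's theorem.

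The main obstacle is exactly this nondegeneracy input that makes $c$ (respectively the Mahler measure) strictly positive. One must read off, from the explicit $L^2$-Burau formulas, that the group elements appearing in the $2\times 2$ matrix are pairwise distinct and suitably non-commuting — concretely, that the words $w_1,w_2$ produced by the Schur complement generate a rank-$2$ free subgroup of $\F_3$ — so that the traceless part $P$ of the relevant $X^*X$ satisfies $\tau(P^2)>0$. If instead everything collapsed onto a single cyclic subgroup (respectively a single torus variable), one would obtain $\det^r=\max(1,t)^2$ and hence $F_{\QQ}(\sigma_1\sigma_2)=F_{\QQ}(\sigma_1)$, and this stabilization would give no counterexample; ruling this out and then checking positivity of the surviving fourth-order term is where the genuine work lies. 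In the abelianized case this is classical, being precisely the point where Smyth's nontrivial Mahler measure enters; in the free case it reduces to a short spectral computation inside the free group factor.
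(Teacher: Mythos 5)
Your overall strategy is the paper's: exhibit one explicit stabilization pair, evaluate at $t=1$ (where the monomial ambiguity disappears) or at small $t$, and conclude from a nontrivial Mahler measure in the abelian case and a nontrivial free-group Fuglede--Kadison determinant in the identity case. Moreover your small-$t$ expansion $\det{}_{\F_2}(\Id+tR_x+t^2R_y)=1+\tfrac34 t^4+O(t^6)$ for $x,y$ free of rank $2$ is correct and would be a more self-contained substitute for the paper's citation of \cite{BA6}. The gap is in the one step you yourself flag as ``where the genuine work lies'' and then declare automatic. With the paper's convention $h_{\alpha\beta}=h_\beta\circ h_\alpha$, the Artin automorphism of your stabilized braid $\sigma_1\sigma_2\in B_3$ sends $g_1\mapsto g_3g_2^{-1}$ and $g_2\mapsto g_3g_2^{-1}g_1$, so every entry of $\overline{\mathcal{B}}^{(2)}_{t,Q}(\sigma_1\sigma_2)-\Id^{\oplus 2}$ is supported on powers of the single element $w=Q(g_3g_2^{-1})$. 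The operator therefore lies over the cyclic subgroup $\langle w\rangle\cong\Z$ for \emph{both} families, its determinant is the one-variable Mahler measure of $1+tz+t^2z^2$, namely $\max(1,t)^2$, and hence $F_{\QQ}(\sigma_1\sigma_2)=[\max(1,t)^{-1}]=F_{\QQ}(\sigma_1)$: this stabilization detects nothing. In particular your claimed value $\mathcal{M}(1+X+Y)$ at $t=1$ and your $1+ct^4$ expansion are wrong for this braid (the function is identically $1$ for $t\leqslant 1$). The ``two monomials come from meridians of distinct strands'' heuristic is not a proof and is in fact convention-sensitive: of the two words $\sigma_1\sigma_2$ and $\sigma_2\sigma_1$ (both stabilizations of $\sigma_1$, both closing to the unknot), exactly one produces the collapsed cyclic matrix, so the independence of the group elements must be read off from the explicit Fox calculus, not assumed.

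The repair is to pick a stabilization whose Fox jacobian genuinely involves two independent elements, which is what the paper does: for $\beta=\sigma_1^{-1}\in B_2$ and $\beta_+=\sigma_1^{-1}\sigma_2\in B_3$ the $2\times 2$ trick gives $\det{}_{G_{Q_{\beta_+}}}\bigl(\overline{\mathcal{B}}^{(2)}_{t,Q_{\beta_+}}(\beta_+)-\Id^{\oplus 2}\bigr)=\tfrac1t\det{}_{G_{Q_{\beta_+}}}\bigl(\Id+tR_{Q_{\beta_+}(g_1)}+t^2R_{Q_{\beta_+}(g_1g_3g_2^{-1})}\bigr)$, and $g_1,\,g_1g_3g_2^{-1}$ (resp.\ their abelianizations $z_1,\,z_1z_3$) generate a rank-$2$ free (resp.\ free abelian) subgroup. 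Then $t=1$ yields $\mathcal{M}(1+X+Y)=1.381\ldots\neq 1$ in the abelian case (the Boyd--Smyth value, Example \ref{ex:mahler:boyd}) and $2/\sqrt3\neq 1$ in the identity case by \cite{BA6} --- or, alternatively, your $\tfrac34t^4$ expansion applied to this genuinely rank-$2$ pair. A minor further point: your opening claim that $F_{\QQ}$ is conjugation-invariant for these two families is unjustified --- the paper's proof of conjugacy invariance (Proposition \ref{prop:first}) uses $Q\circ h_{\alpha^{-1}\beta\alpha}=Q$, which fails for the identity and abelianization families --- but this is harmless, since a single stabilization counterexample already shows $F_{\QQ}$ is not a Markov function.
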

Our main tools to prove Theorem \ref{thm:intro:counter} are relations between Fuglede--Kadison determinants (which are technical to define and difficult to compute), Mahler measures of polynomials (notably studied by Boyd \cite{Bo}) and combinatorics on Cayley graphs of free groups (see \cite{BA6}).

As the reader will probably agree, the initial question (how to build link invariants from $L^2$-Burau maps) is still far from answered. We restricted ourselves to studying an intuitive form of a potential Markov function, namely $F_\QQ$, and we found that twisted $L^2$-Alexander torsions appeared to be the best link invariants we could obtain with it. This last point is unsurprising considering the form of $F_\QQ$ and its natural connection with the Alexander polynomial and its variations.

However, there may well be new link invariants to discover via other functions of the $L^2$-Burau maps, and we hope that our computations of how these maps change under Markov moves can be of use for future research in this vein.

The present article arose as a part of a wider project in collaboration with C. Anghel aiming to construct new knot invariants from $L^2$-versions of the Burau and Lawrence representations of braid groups. To attain this end, studying the influence of Markov moves on $L^2$-Burau maps appears to be a natural step.
Moreover, at the time of writing, the current paper is an expansion of the first half of the preprint \cite{BA56}. The second half of \cite{BA56} has since been expanded into the separate paper \cite{BA6}.

The article is organized as follows: in Section \ref{sec:prelim} we recall preliminaries on braid groups and $L^2$-invariants; 
 in Section \ref{sec:appendix}, we cover some improvements of classical properties of $L^2$-torsions;
 in Section \ref{sec:twisted}, we state and prove Theorem \ref{thm:burau:alexander:twisted};
finally, in Section \ref{sec:other} we introduce the notion of a Markov-admissible family of group epimorphisms and we study several examples and counter-examples of Markov invariance.

\section{Preliminaries}\label{sec:prelim}

In this section, we will set some notation and recall some fundamental properties.
We will mostly follow the conventions of \cite{BAC} and \cite{Lu}. Additional details on braid groups can be found in \cite{Bi, KT}.

\subsection{Braid groups}\label{subsec:braid}

The braid group~$B_n$ can  be seen as the set of  isotopy classes of orientation-preserving homeomorphisms of the punctured disk $D_n :=D^2 \setminus \lbrace p_1,..., p_n \rbrace$ which fix the boundary pointwise. Recall that~$B_n$ admits a presentation with $n-1$ generators~$\sigma_1,\sigma_2, \dots, \sigma_{n-1}$ following the relations~$\sigma_i \sigma_{i+1} \sigma_i=\sigma_{i+1} \sigma_i \sigma_{i+1}$ for each~$i$, and~$\sigma_i \sigma_j = \sigma_j \sigma_i$ if~$|i-j|>2$. Topologically, the generator~$\sigma_i$ is the braid whose~$i$-th component passes over the~$(i+1)$-th component.

\begin{figure}[!h]
\begin{tikzpicture}
\begin{scope}[xshift=0cm,scale=1]

\draw[very thick, fill=gray!25] (0,0) circle (2);
\draw[very thick, fill=white] (-1,0) circle (0.2);
\draw[very thick, fill=white] (0,0) circle (0.2);
\draw[very thick, fill=white] (1,0) circle (0.2);

\draw (0,2.2) node {$z$};
\draw (-1,-.9) node {$x_1$};
\draw (0,-.9) node {$x_2$};
\draw (1,-.9) node {$x_3$};

\draw[thick, ->] (0,2) 	..controls +(-1,-.2) and +(-.8,0).. (-1,-0.5);
\draw[thick] (0,2) 	..controls +(-1,-.2) and +(.8,0).. (-1,-0.5);

\draw[thick, ->] (0,2) 	..controls +(-.2,-.2) and +(-.8,0).. (0,-0.5);
\draw[thick] (0,2) 	..controls +(.2,-.2) and +(.8,0).. (0,-0.5);

\draw[thick, ->] (0,2) 	..controls +(1,-.2) and +(-.8,0).. (1,-0.5);
\draw[thick] (0,2) 	..controls +(1,-.2) and +(.8,0).. (1,-0.5);

\end{scope}

\begin{scope}[xshift=5cm,scale=1]

\draw[very thick, fill=gray!25] (0,0) circle (2);
\draw[very thick, fill=white] (-1,0) circle (0.2);
\draw[very thick, fill=white] (0,0) circle (0.2);
\draw[very thick, fill=white] (1,0) circle (0.2);

\draw (0,2.2) node {$z$};
\draw (-.5,-.5) node {$g_1$};
\draw (0.55,-.9) node {$g_2$};
\draw (.65,-1.6) node {$g_3$};

\draw[thick, ->] (0,2) 	..controls +(-1,-.2) and +(-.8,0).. (-1,-0.5);
\draw[thick] (0,2) 	..controls +(-1,-.2) and +(.8,0).. (-1,-0.5);

\draw[thick, ->] (0,2) 	..controls +(-1.5,-.2) and +(-2.5,0).. (0,-1);
\draw[thick] (0,2) 	..controls +(.8,-.2) and +(.8,0).. (0,-1);

\draw[thick,->] (0,2) arc (90:270:1.75);
\draw[thick] (0,2) arc (90:-90:1.75);

\end{scope}

\end{tikzpicture}
	\caption{Two families of loops on the punctured disk~$D_3$. }
	\label{fig:DiskTwisted}
\end{figure}

Fix a base point~$z$ of~$D_n$ and denote by~$x_i$ the simple loop based at~$z$ turning once around~$p_i$ counterclockwise for~$i=1,2,\dots, n$ (see Figure \ref{fig:DiskTwisted}). The group~$\pi_1(D_n,z)$ can then be identified with the free group~$\F_n$ on the~$x_i.$ 
If~$H_\beta$ is a homeomorphism of~$D_n$ representing a braid~$\beta$, then the induced automorphism~$h_{\beta}$ of the free group~$\F_n$  depends only on~$\beta$. It follows from the way we compose braids that~$h_{\alpha \beta}=h_{\beta}\circ h_{\alpha}$, and the resulting \textit{right} action of~$B_n$ on~$\F_n$ (named the \textit{Artin action}) can be explicitly described by
\[
h_{\sigma_i}(x_j)=
\begin{cases}
x_i x_{i+1} x_i^{-1}  & \mbox{if }  j=i, \\
x_i                            & \mbox{if }  j=i+1, \\ 
x_j                             & \mbox{otherwise, } \\ 
\end{cases} 
\hspace{1cm}
h_{\sigma_i^{-1}}(x_j)=
\begin{cases}
 x_{i+1}   & \mbox{if }  j=i, \\
x_{i+1}^{-1} x_i  x_{i+1}                          & \mbox{if }  j=i+1, \\ 
x_j                             & \mbox{otherwise. } \\ 
\end{cases} 
\] 

In this paper we will also use a second set of generators of $\F_n$, namely
$$g_1 := x_1, \  g_2 := x_1 x_2, \ \ldots, \ g_n := x_1 \ldots x_n.$$
Looking at Figure \ref{fig:DiskTwisted}, $g_i$ represents the class of the loop that circles the first $i$ punctures. On these generators, $B_n$ acts in the following way:
\[
h_{\sigma_i}(g_j)=
\begin{cases}
g_{i+1} g_i^{-1} g_{i-1}  & \mbox{if }  j=i, \\
g_j                             & \mbox{otherwise, } \\ 
\end{cases} 
\hspace{1cm}
h_{\sigma_i^{-1}}(g_j)=
\begin{cases}
g_{i-1} g_i^{-1} g_{i+1}   & \mbox{if }  j=i, \\
g_j                             & \mbox{otherwise,} \\ 
\end{cases} 
\] 
where we use the convention $g_0:=1$.

\subsection{Fox calculus}

Denoting by~$\F_n$ the free group on~$x_1,x_2,\dots,x_n$, and for $i \in \{1,\ldots,n\}$, the $i$-th \textit{Fox derivative} 
$\dfrac{\partial}{\partial x_i}: \mathbb{Z}[\F_n] \rightarrow \mathbb{Z}[\F_n]~$ 
(first introduced in \cite{Fox})
 is the linear extension of the map defined on $\F_n$ by:
 $\forall i,j \in \{1,\ldots,n\}, \forall u,v \in \F_n,$
$$
\frac{\partial x_j}{\partial x_i}=\delta_{i,j}, \ \ \ \ \ \ \ \ \ \frac{\partial x_j^{-1}}{\partial x_i}=-\delta_{i,j} x_j^{-1}, \ \ \ \ \ \ \ \ \  \frac{\partial (uv)}{\partial x_i}=\frac{\partial u}{\partial x_i}+u\frac{\partial v}{\partial x_i}.
$$

The following formula is often called the \textit{fundamental formula of Fox calculus}:

\begin{prop}\label{prop:fox}
Let $u \in \Z \F_n$ and $\epsilon\colon \Z \F_n \to \Z$ the ring epimorphism defined by $\epsilon\colon x_i \mapsto 1$ for all $i \in \{1,\ldots,n\}$. Then:
$$
u - \epsilon(u) \cdot  1 = \sum_{i=1}^n\dfrac{\partial u}{\partial x_i}  \cdot  (x_i -1)
.$$
\end{prop}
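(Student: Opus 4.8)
The plan is to reduce the statement to single group elements by linearity and then argue by induction on word length. Both sides of the claimed identity are $\Z$-linear in $u$: the left-hand side because $\epsilon$ is $\Z$-linear, and the right-hand side because each Fox derivative $\partial/\partial x_i$ is $\Z$-linear by construction. Since $\F_n$ generates $\Z\F_n$ as an abelian group, it suffices to prove the formula when $u=w\in\F_n$; in that case $\epsilon(w)=1$, so the claim becomes
$$
w-1=\sum_{i=1}^n\frac{\partial w}{\partial x_i}\,(x_i-1).
$$

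Next I would write $w$ as a word $w=s_1\cdots s_k$ with each $s_\ell$ equal to some $x_j^{\pm 1}$, and induct on $k$. For $k=0$ we have $w=1$ and both sides vanish. For $k=1$, a direct computation from the defining relations $\frac{\partial x_j}{\partial x_i}=\delta_{i,j}$ and $\frac{\partial x_j^{-1}}{\partial x_i}=-\delta_{i,j}x_j^{-1}$ gives, for $w=x_j$, $\sum_i\delta_{i,j}(x_i-1)=x_j-1$, and for $w=x_j^{-1}$, $\sum_i(-\delta_{i,j}x_j^{-1})(x_i-1)=-x_j^{-1}(x_j-1)=x_j^{-1}-1$; in both cases this equals $w-1$. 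It is important to record the inverse-generator case here, since those elements also occur among the $s_\ell$.

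For the inductive step I would split $w=vs$ with $v=s_1\cdots s_{k-1}$ and $s=s_k$, apply the product rule $\frac{\partial(vs)}{\partial x_i}=\frac{\partial v}{\partial x_i}+v\frac{\partial s}{\partial x_i}$ from the definition of the Fox derivatives, and compute
$$
\sum_{i=1}^n\frac{\partial(vs)}{\partial x_i}\,(x_i-1)
=\sum_{i=1}^n\frac{\partial v}{\partial x_i}\,(x_i-1)+v\sum_{i=1}^n\frac{\partial s}{\partial x_i}\,(x_i-1)
=(v-1)+v(s-1)=vs-1,
$$
where the second equality uses the inductive hypothesis for $v$ (a word of length $k-1$) together with the $k=1$ case for $s$. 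Since $vs=w$ and $\epsilon(w)=1$, this is exactly the desired identity, completing the induction and hence the proof.

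There is no genuine obstacle here: the argument is a routine structural induction. The only points demanding a little care are the legitimacy of the reduction to group elements (which relies on additivity of both sides, and in particular does not need $\epsilon$ to be multiplicative) and the verification of the base case for $x_j^{-1}$ as well as $x_j$, since the induction builds up words by right-multiplication by arbitrary generators and their inverses.
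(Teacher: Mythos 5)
Your proof is correct. The paper itself does not prove this statement: it is quoted as the classical \emph{fundamental formula of Fox calculus}, with the reference to Fox's original article, so there is no in-paper argument to compare against. Your argument — reduce to a single group element $w$ by $\Z$-linearity of both sides, then induct on the length of a word expressing $w$, with the two base cases $x_j$ and $x_j^{-1}$ checked from the defining relations and the inductive step given by the product rule — is the standard proof and is complete. The only micro-detail left implicit is the base case $k=0$: that $\frac{\partial 1}{\partial x_i}=0$ is itself a consequence of the product rule applied to $1=1\cdot 1$, which is worth one line but poses no difficulty.
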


\subsection{Fuglede--Kadison determinant}

In this section we will give  short definitions of the von Neumann trace and the Fuglede--Kadison determinant.
More details can be found in \cite{Lu} and \cite{BAC}.

Let $G$ be a finitely generated group.
The Hilbert space $\ell^2(G)$ is the completion of the group algebra $\C G$, and the space of bounded operators on it is denoted $B(\ell^2(G))$. We will focus on \textit{right-multiplication operators} $R_w \in B(\ell^2(G))$, where $R_{\cdot}$ denotes the right regular action of $G$ on $\ell^2(G)$ extended to the group ring $\C G$ (and further extended to the rings of matrices $M_{p,q}(\C G)$).

For any element $w = a_0 \cdot 1_G + a_{1} g_1 \ldots + a_{r} g_r \in \C G$, the \textit{von Neumann trace} $\mathrm{tr}_G$ of the associated right multiplication operator is defined as
$$\mathrm{tr}_G(R_w) = \mathrm{tr}_G\left (
a_0 \Id_{\ell^2(G)} + a_{1} R_{g_1} \ldots + a_{r} R_{g_r} \right ) := a_0,$$
and the von Neumann trace for a finite square matrix over $\C G$ is given as the sum of the traces of the diagonal coefficients.

Now the most concise definition of the \textit{Fuglede--Kadison determinant} $\det_G(A)$ of a  right-multiplication operator $A$ is probably 
$$
\det{}_G(A) := 
\lim_{\varepsilon \to 0^+}
\left (\exp \circ \left (\dfrac{1}{2} \mathrm{tr}_G\right ) \circ  \ln \right )
\left (
(A_\perp)^*(A_\perp) + \varepsilon \Id \right )  \ \geqslant 0,$$
where $A_\perp$ is the restriction of $A$ to a supplementary of its kernel, $*$ is the adjunction and $\ln$ the logarithm of an operator in the sense of the holomorphic functional calculus. Compare with \cite[Theorem 3.14]{Lu} and Proposition \ref{prop:detFK:properties} below. We call the operator $A$ \textit{of determinant class} if $\det_G(A) \neq 0$.

The following properties concern the classical Fuglede--Kadison determinant $\det_G$ described in \cite[Chapter 3]{Lu}, which is not always multiplicative  if one deals with non injective operators. Moreover, this determinant forgets about the influence of the spectral value $0$, which surprisingly makes it take the value $1$ for the zero operator. More recent articles have used the \textit{regular Fuglede--Kadison determinant} $\det^r_G$ instead, which is defined for square injective operators, is zero for non injective operators, and is always multiplicative. In this paper we will work with both types of determinants, but the reader should be reassured that most of the statements we will make remain unchanged while replacing one determinant with the other (up to assumptions on injectivity usually). Similarly, the statements of the following Proposition \ref{prop:detFK:properties} admit immediate variants with $\det^r_G$. All statements of Proposition \ref{prop:detFK:properties} follow from \cite[Section 3]{Lu}, except for (6), which directly follows from the others.

\begin{prop}[\cite{Lu}]\label{prop:detFK:properties}
Let~$G$ be a countable discrete group and let $$A,B,C,D \in \sqcup_{p,q \in \N} R_{M_{p,q}(\C G)}$$ be general right multiplication operators. The Fuglede--Kadison determinant satisfies the following properties:
\begin{enumerate}
\item (multiplicativity) If $A, B$ are injective, square  and of the same size, then 
$$\det{}_G(A \circ B) = \det{}_G(A) \det{}_G(B).$$
\item (block triangular case) If $A, B$ are injective and square, then
$$\det{}_G
\begin{pmatrix}
A & C \\ 0& B
\end{pmatrix} 
= \det{}_G(A) \det{}_G(B)= \det{}_G
\begin{pmatrix}
A & 0 \\ D& B
\end{pmatrix},$$
where $C,D$ have the appropriate dimensions.
\item  (induction) If $\iota\colon G \hookrightarrow H$ is a group monomorphism, then 
$$\det{}_H (\iota (A)) 
= \det{}_G(A).$$
\item  (relation with the von Neumann trace) If $A$ is a positive operator, then
$$\det{}_G(A) = \left (\exp \circ \mathrm{tr}{}_G \circ \ln\right )(A).$$
\item (simple case) If~$g \in G$ is of infinite order, then for all~$t \in \C$ the operator $ \Id - t R_g$ is injective and~$$
\mathrm{det}_{G} ( \Id - t R_g) = \max ( 1 , |t|).$$
\item ($2\times 2$ trick) For all $A,B,C,D \in N(G)$ such
that $B$ is invertible, 
$\begin{pmatrix}
A & B \\ C& D
\end{pmatrix}$ is injective if and only if 
$D B^{-1} A - C$ is injective, and in this case
one has:
$$\det{}_G
\begin{pmatrix}
A & B \\ C& D
\end{pmatrix}
= \det{}_G(B) \det{}_G(D B^{-1} A - C).$$
\item (relation with Mahler measure) Let $G= \Z^d$, and $P\in \C[X_1^{\pm 1},\ldots,X_d^{\pm 1}]$ denote the  Laurent polynomial associated to the operator $A \in R_{\C \Z^d}$. Then
$$
\mathrm{det}_{\Z^d} (A) = \mathcal{M}(P) = \exp\left (\dfrac{1}{(2 \pi)^d} \int_0^{2\pi} \ldots \int_0^{2\pi} \ln\left (\left |P(e^{i \theta_1}, \ldots, e^{i \theta_d})\right |\right )d\theta_1 \ldots d\theta_d \right ),$$
where $\mathcal{M}$ is the Mahler measure.
\item (limit of positive operators) If $A$ is injective, then
$$\det{}_{G}(A)=\det{}_{G}(A^*) = \lim_{\varepsilon \to 0^+}
\sqrt{\det{}_{G}(A^* A + \varepsilon \Id)}.$$
\item (dilations) Let $\lambda \in \C^*$. Then:
$$\det{}_G\left (\lambda \ \Id^{\oplus n}\right ) = |\lambda|^n.$$
\end{enumerate}
\end{prop}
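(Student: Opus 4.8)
My plan is to treat this proposition as a compilation. Properties (1)--(5) and (7)--(9) I would simply record with pointers to \cite[Section 3]{Lu}: the algebraic identities (1) (multiplicativity), (2) (block triangular case) and (3) (induction) are part of \cite[Theorem 3.14]{Lu}; the relation (4) with the von Neumann trace is a direct consequence of the definition of $\det_G$ recalled above; (5) and (7) are the standard computations of $\det_G$ for $G=\Z$ and $G=\Z^d$ carried out there; and (8) is the approximation formula already used in that definition. For (9) I would moreover note that it follows from (4): since $|\lambda|\,\Id^{\oplus n}$ is positive with $\mathrm{tr}_G\bigl(\ln(|\lambda|\,\Id^{\oplus n})\bigr)=n\ln|\lambda|$, one gets $\det_G(|\lambda|\,\Id^{\oplus n})=|\lambda|^{n}$, and then $\det_G(\lambda\,\Id^{\oplus n})=\det_G(|\lambda|\,\Id^{\oplus n})$ because $(\lambda\,\Id)^{*}(\lambda\,\Id)=(|\lambda|\,\Id)^{*}(|\lambda|\,\Id)$, using (8). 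Thus the only item calling for a genuine argument is the $2\times2$ trick (6), which I would deduce from (1), (2) and (9) by a Schur-complement reduction.

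For (6), I would set $M:=\begin{pmatrix} A & B \\ C & D\end{pmatrix}$ and introduce the two invertible operators $L:=\begin{pmatrix} \Id & 0 \\ -DB^{-1} & \Id\end{pmatrix}$ and $P:=\begin{pmatrix} 0 & \Id \\ \Id & 0\end{pmatrix}$. Property (2) gives $\det_G(L)=1$, and $\det_G(P)=1$ because $\det_G(P)^{2}=\det_G(P^{2})=\det_G(\Id)=1$ and $\det_G\geqslant 0$. A direct block computation then yields
\[
LMP=\begin{pmatrix} B & A \\ 0 & C-DB^{-1}A\end{pmatrix},
\]
which is block upper triangular with invertible $(1,1)$-entry $B$. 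Since $L$ and $P$ are invertible, $M$ is injective if and only if $LMP$ is; and a block upper triangular operator with invertible $(1,1)$-entry $B$ is injective if and only if its $(2,2)$-entry $C-DB^{-1}A$ is, hence if and only if $DB^{-1}A-C$ is (passing to the opposite does not change the kernel). This settles the injectivity equivalence.

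Finally, assuming this common injectivity, I would conclude by chaining three facts: multiplicativity (1) applied to $M=L^{-1}(LMP)P^{-1}$ gives $\det_G(M)=\det_G(LMP)$; the block triangular case (2) gives $\det_G(LMP)=\det_G(B)\,\det_G(C-DB^{-1}A)$; and (9) with $\lambda=-1$, together with (1), gives $\det_G(C-DB^{-1}A)=\det_G(DB^{-1}A-C)$. I do not expect a real obstacle here; the only point that needs care is to check, at each use of (1) and (2) above, that the operators involved are injective, since the non-regular determinant $\det_G$ is multiplicative only on injective square operators and artificially equals $1$ on operators with nontrivial kernel. For the regular determinant $\det^{r}_G$ the same chain of equalities holds verbatim, both sides vanishing in the non-injective case.
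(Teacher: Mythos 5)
Your proposal is correct and matches the paper's treatment: the paper likewise disposes of items (1)--(5) and (7)--(9) by citing \cite[Section 3]{Lu} and notes only that (6) ``directly follows from the others.'' Your Schur-complement argument with $L$, $P$ and the factorization $LMP=\begin{pmatrix} B & A \\ 0 & C-DB^{-1}A\end{pmatrix}$ is exactly a correct way of filling in that remark, with the injectivity checks needed for multiplicativity properly accounted for.
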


\begin{exemple}[\cite{Bo}, Section 4]\label{ex:mahler:boyd}
	The two-variable polynomial $1+X+Y \in \C[X,Y]$ has Mahler measure 
	$\mathcal{M}(1+X+Y) = e^{
		\frac{1}{\pi}\Im \mathrm{Li_2}\left (e^{i \pi/3}\right )
	}
	= 1.38135...$
	
	Thus, it follows from Proposition \ref{prop:detFK:properties} (7) that for $\Z^2=\langle x,y|xy=yx\rangle$,
	$$\det{}_{\Z^2}(\Id + R_x+R_y)=\mathcal{M}(1+X+Y)
	= 1.38135...$$
\end{exemple}

Example  \ref{ex:mahler:boyd} will be used later in the proof of Theorem \ref{thm:contrex:abel}.

\begin{remark}\label{rem:atiyah}
If the group $G$ satisfies the \textit{strong Atiyah conjecture} (see \cite[Chapter 10]{Lu}), then the right multiplication operator by any non-zero element of $\C G$ is injective, which makes it convenient to apply some parts of Proposition \ref{prop:detFK:properties}. Note that free groups and free abelian groups satisfy the strong Atiyah conjecture.
\end{remark}

\subsection{$L^2$-Burau maps on braids}
The $L^2$-Burau maps of braids were defined and studied in \cite{BAC}. Before this, a specific $L^2$-Burau map had been introduced in \cite{HL}, where it was called a quantization of the Burau representation.

Let $n\in \N^*$, $t>0$,  let $\Phi_{n}\colon \F_n \twoheadrightarrow \Z$ denote the projection that sends all free generators to $1$, and let $\gamma\colon \F_n \twoheadrightarrow G$ denote an epimorphism such that $\Phi_n$ factors through $\gamma$. 
Let
$\kappa(t,\Phi_{n},\gamma): \Z\F_n \to \R G$ denote the ring homomorphism that sends $g \in \F_n$ to $t^{\Phi_n(g)} \gamma(g) \in \R G$.

Then, following \cite{BAC}, the associated \textit{$L^2$-Burau map} on $B_n$ is
$$\mathcal{B}^{(2)}_{t,\gamma}\colon B_n \ni \beta \ \mapsto \ 
R_{\kappa(t,\Phi_{n},\gamma)(J)} \in B\left (\ell^2(G)^{\oplus n}\right ),
$$
where $J= \left ( \dfrac{\partial h_\beta(x_j)}{\partial x_i}\right )_{1 \leqslant i,j \leqslant n}$ is the Fox jacobian of $h_{\beta}$ for the base of the $x_i$.

The \textit{reduced $L^2$-Burau map} on $B_n$ (associated to the same parameters $t,\gamma$) is
$$\overline{\mathcal{B}}^{(2)}_{t,\gamma}\colon B_n \ni \beta \ \mapsto \ 
R_{\kappa(t,\Phi_{n},\gamma)(J')} \in B\left (\ell^2(G)^{\oplus (n-1)}\right ),
$$
where $J'= \left ( \dfrac{\partial h_\beta(g_j)}{\partial g_i}\right )_{1 \leqslant i,j \leqslant n-1}$ is the Fox jacobian of $h_{\beta}$ for the base of the $g_i$.

In the remainder of this article we will focus on reduced $L^2$-Burau maps.

Observe that $L^2$-Burau maps (reduced or not) can also be defined as maps over a certain homology of a cover of the punctured disk (see \cite{BAC} for details). Although these homological definitions may be more natural and  useful for further generalizations, the current article will only use the previous definitions via Fox jacobians.

Let us now state an (anti-)multiplication formula for the reduced $L^2$-Burau maps. 

\begin{prop}[\cite{BAC}]\label{prop:L2burau:mult}
For any $n, t, \gamma$ as above and any  braids $\alpha, \beta \in B_n$, we have:
$$
\overline{\mathcal{B}}^{(2)}_{t,\gamma }(\alpha \beta) =
\overline{\mathcal{B}}^{(2)}_{t,\gamma }(\beta) \circ 
\overline{\mathcal{B}}^{(2)}_{t,\gamma \circ h_{\beta}}(\alpha).
$$
\end{prop}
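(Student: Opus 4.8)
The plan is to reduce the claimed formula to a chain-rule identity for Fox derivatives, then transport that identity through the ring homomorphism $\kappa(t,\Phi_n,\gamma)$ and the right-regular representation. First I would recall the Fox-calculus chain rule: if $\varphi,\psi$ are endomorphisms of $\F_n$, then for every $w\in\F_n$ one has
\[
\frac{\partial (\psi\circ\varphi)(w)}{\partial x_i}
=\sum_{k=1}^n \psi\!\left(\frac{\partial \varphi(w)}{\partial x_k}\right)\frac{\partial \psi(x_k)}{\partial x_i},
\]
which follows from the product and inverse rules by induction on word length. Applying this with $w$ running over the generators $g_j$ (so that we work directly with the matrix $J'$ of the reduced map), and using the Artin anti-homomorphism property $h_{\alpha\beta}=h_\beta\circ h_\alpha$ recalled in Section \ref{subsec:braid}, gives an identity of matrices over $\Z\F_n$: the reduced Fox jacobian of $h_{\alpha\beta}$ factors as the product of the reduced Fox jacobian of $h_\alpha$ with the $h_\beta$-image of the reduced Fox jacobian of $h_\beta$ applied to the $g$-generators — i.e. $J'(\alpha\beta)=h_\beta(J'(\alpha))\,J'(\beta)$ in the appropriate order. (One must check that the reduced index range $1\le i,j\le n-1$ is preserved; this works because $h_{\sigma_i^{\pm1}}$ fixes $g_n$ and the matrices are block-compatible, a point worth stating carefully.)

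Next I would apply the ring homomorphism $\kappa(t,\Phi_n,\gamma)$ coefficient-wise. The key compatibility is that $\kappa(t,\Phi_n,\gamma)\circ h_\beta=\kappa(t,\Phi_n,\gamma\circ h_\beta)$ as ring maps $\Z\F_n\to\R G$: indeed $\Phi_n\circ h_\beta=\Phi_n$ since braid automorphisms preserve the total exponent sum, so the $t$-power part is unchanged, while the group-ring part changes exactly by precomposition with $h_\beta$, which is the definition of the twisted epimorphism $\gamma\circ h_\beta$. Hence applying $\kappa(t,\Phi_n,\gamma)$ to $J'(\alpha\beta)=h_\beta(J'(\alpha))\,J'(\beta)$ yields
$\kappa(t,\Phi_n,\gamma)(J'(\alpha\beta))=\kappa(t,\Phi_n,\gamma\circ h_\beta)(J'(\alpha))\cdot\kappa(t,\Phi_n,\gamma)(J'(\beta))$
as matrices over $\R G$.

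Finally I would pass to right-multiplication operators on $\ell^2(G)^{\oplus(n-1)}$. Since $R_{(\cdot)}$ is an anti-homomorphism on $\R G$ (right multiplication by a product reverses order) extended entrywise to matrices, applying $R$ to the matrix identity above reverses the order of the two factors, producing exactly
$\overline{\mathcal B}^{(2)}_{t,\gamma}(\alpha\beta)=\overline{\mathcal B}^{(2)}_{t,\gamma}(\beta)\circ\overline{\mathcal B}^{(2)}_{t,\gamma\circ h_\beta}(\alpha)$,
as claimed. The main obstacle is bookkeeping rather than conceptual: one must keep consistent conventions for whether braids compose on the left or right, whether $R$ is a homomorphism or anti-homomorphism, and whether matrices act on row or column vectors, since two order-reversals are in play and a sign error in either convention flips the statement. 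Since the paper already fixes $h_{\alpha\beta}=h_\beta\circ h_\alpha$ and works with right-multiplication operators, I would follow those conventions throughout and verify the composition order on the single generator case $\alpha=\sigma_i$, $\beta=\sigma_j$ as a sanity check. (This result is from \cite{BAC}, so one may alternatively simply cite it, but the above is how I would reconstruct the argument.)
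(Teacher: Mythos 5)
Your reconstruction is correct and follows essentially the argument of the cited source \cite{BAC} (the present paper gives no proof of its own, simply quoting the result): the Fox-calculus chain rule applied in the $g_i$-basis, the reduction to the $(n-1)\times(n-1)$ block via $h_\beta(g_n)=g_n$, the compatibility $\kappa(t,\Phi_n,\gamma)\circ h_\beta=\kappa(t,\Phi_n,\gamma\circ h_\beta)$ coming from $\Phi_n\circ h_\beta=\Phi_n$, and the anti-multiplicativity of right-multiplication operators are exactly the ingredients used there. The only loose end is the matrix-ordering bookkeeping you explicitly flag, which is a matter of fixing conventions (and is settled by your proposed check on generators), not a mathematical gap.
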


Remark that the unreduced $L^2$-Burau maps satisfy an identical formula. 

\begin{remark}
As Proposition \ref{prop:L2burau:mult} illustrates, $L^2$-Burau maps are \textit{crossed homomorphisms} from braid groups to certain groups of operators on Hilbert spaces. In this sense, $L^2$-Burau maps could (should?) rather be called \textit{$L^2$-Burau (crossed) representations} of the braid groups. However, as the current paper is a direct sequel of \cite{BAC}, we will keep calling them  \textit{$L^2$-Burau maps} for the reader's convenience.
\end{remark}

It follows from Proposition \ref{prop:L2burau:mult} that a reduced $L^2$-Burau map can be computed for any braid via knowing the values on the generators $\sigma_i$ of the braid group. For the reader's convenience and since they will be used in the remainder of this article, we now provide the images of the generators $\sigma_i$:
\begin{align*}
\overline{\mathcal{B}}_{t,\gamma}^{(2)}(\sigma_1)
&=
\begin{pmatrix}
-tR_{\gamma(g_{2}g_1^{-1})}  & 0 \\
\Id & \Id
\end{pmatrix}
\oplus \Id^{\oplus (n-3) }, \\
\overline{\mathcal{B}}_{t,\gamma}^{(2)}(\sigma_i)
&=\Id^{\oplus(i-2)} \oplus 
\begin{pmatrix}
\Id & tR_{\gamma(g_{i+1}g_i^{-1})} & 0 \\
0 & -tR_{\gamma(g_{i+1}g_i^{-1})}  & 0 \\
0 & \Id & \Id
\end{pmatrix}
\oplus \Id^{\oplus(n-i-2)} \ for \ 1<i<n-1,\\
\overline{\mathcal{B}}_{t,\gamma}^{(2)}(\sigma_{n-1})
&=\Id^{\oplus (n-3)} \oplus 
\begin{pmatrix}
\Id & tR_{\gamma(g_{n}g_{n-1}^{-1})}  \\
0 & -tR_{\gamma(g_{n}g_{n-1}^{-1})}   \\
\end{pmatrix},
\end{align*}
and the images of the inverses $\sigma_i^{-1}$ of the generators:
\begin{align*}
\overline{\mathcal{B}}_{t,\gamma}^{(2)}(\sigma_1^{-1})
&=
\begin{pmatrix}
-\frac{1}{t} R_{\gamma(g_1^{-1})}  & 0 \\
\frac{1}{t} R_{\gamma(g_1^{-1})} & \Id
\end{pmatrix}
\oplus \Id^{\oplus (n-3) }, \\
\overline{\mathcal{B}}_{t,\gamma}^{(2)}(\sigma_i^{-1})
&=\Id^{\oplus(i-2)} \oplus 
\begin{pmatrix}
\Id & \Id & 0 \\
0 & - \frac{1}{t} R_{\gamma(g_{i-1}g_i^{-1})}  & 0 \\
0 & \frac{1}{t} R_{\gamma(g_{i-1}g_i^{-1})} & \Id
\end{pmatrix}
\oplus \Id^{\oplus(n-i-2)} \ for \ 1<i<n-1,\\
\overline{\mathcal{B}}_{t,\gamma}^{(2)}(\sigma_{n-1}^{-1})
&=\Id^{\oplus (n-3)} \oplus 
\begin{pmatrix}
\Id & \Id  \\
0 & -\frac{1}{t} R_{\gamma(g_{n-2}g_{n-1}^{-1})}   \\
\end{pmatrix}.
\end{align*}

\begin{remark}\label{rem:det:l2burau}
It follows from what precedes and from Proposition \ref{prop:detFK:properties} (2) and (9) that for all $t>0$, we have
$\det_G\left (\overline{\mathcal{B}}_{t,\gamma}^{(2)}(\sigma_i^{\pm 1}) \right )= t^{\pm 1}$.
\end{remark}

\subsection{$L^2$-torsions}

This section covers some necessary definitions to state the results in Section \ref{sec:appendix}, which in turn will be used to prove Theorem \ref{thm:burau:alexander:twisted}. We refer to \cite{Lu} and \cite{BA2} for more details.

A \textit{finitely generated Hilbert $\mathcal{N}(G)$-module} is an Hilbert space $V$ on which there is a left $G$-action by isometries, and 
such that there exists a positive integer $m$ and an embedding $\phi$ of $V$ into $\bigoplus_{i=1}^m \ell^2(G)$ (in this paper, such spaces $V$ will always be of the form $\ell^2(G)^{\oplus n}$ for $n\in \N$). 

For $U$ and $V$ two finitely generated Hilbert $\mathcal{N}(G)$-modules, we will call $ f\colon U \rightarrow V$ a \textit{morphism of finitely generated Hilbert $\mathcal{N}(G)$-modules} if $f$ is a linear $G$-equivariant map, bounded for the respective scalar products of $U$ and $V$ (in this paper, these morphisms will simply be right multiplication operators).

A \textit{finite Hilbert $\NN(G)$-chain complex} $C_*$ is a sequence of morphisms of finitely generated Hilbert $\NN(G)$-modules
$$C_* = 0 \to C_n \overset{\partial_n}{\longrightarrow} C_{n-1} 
\overset{\partial_{n-1}}{\longrightarrow} \ldots
\overset{\partial_2}{\longrightarrow} C_1 \overset{\partial_1}{\longrightarrow} C_0 \to 0$$
such that $\partial_p \circ \partial_{p+1} =0$ for all $p$ (in this paper, $n$ will be at most $3$).

The \textit{$p$-th $L^2$-homology of $C_*$}  
$H_p^{(2)}(C_*) := Ker(\partial_p) / \overline{Im(\partial_{p+1})}$ is a finitely generated Hilbert $\NN(G)$-module. 
We say that $C_*$ is \textit{weakly acyclic} if its $L^2$-homology is trivial. We say that $C_*$ is of \textit{determinant class} if all the operators $\partial_p$ are of determinant class.

	Let $C_*$ be a finite Hilbert $\NN(G)$-chain complex as above. Its \textit{$L^2$-torsion} is
	$$T^{(2)}(C_*) := \prod_{i=1}^n \det {}_{\NN(G)}(\partial_i)^{(-1)^i} \in \R_{>0}$$
if $C_*$ is weakly acyclic and of determinant class, and is  $T^{(2)}(C_*):=0$ otherwise.

Let $\pi$ be a group and $\phi\colon \pi \to \Z$, $\gamma\colon \pi \to G$ two group homomorphisms.
We say that $(\pi,\phi,\gamma)$ \textit{forms an admissible triple} if $\phi \colon  \pi \to \Z$ factors 
through $\gamma$.
For $X$ a CW-complex, we say that $(X,\phi\colon \pi_1(X) \to \Z,\gamma\colon \pi_1(X) \to G)$ \textit{forms an admissible triple} if $(\pi_1(X),\phi,\gamma)$ forms one.
Let $(X,\phi,\gamma)$ be such an admissible triple, $\pi= \pi_1(X)$ and $t>0$. We define a ring homomorphism
$$\kappa(\pi, \phi, \gamma, t)\colon \begin{pmatrix}
& \Z[\pi]& \longrightarrow & \R[G] \\
& \sum_{j=1}^r m_j g_j & \longmapsto & \sum_{j=1}^r m_j t^{\phi(g_j)} \gamma(g_j)
\end{pmatrix}
$$
and we also denote $\kappa(\pi, \phi, \gamma, t)$ its induction over the $M_{p,q}(\Z[\pi])$.

Assume $X$ is compact. The cellular chain complex of $\widetilde{X}$ denoted
$C_*(\widetilde{X},\Z) = $ \\
$\left (\ldots \to \bigoplus_i \Z[\pi] \widetilde{e}_i^k \to \ldots\right )$
is a chain complex of left $\Z[\pi]$-modules.
Here the $\widetilde{e}_i^k$ are lifts of the cells $e_i^k$ of $X$.
The group $\pi$ acts on the right on $\ell^2(G)$ by $g \mapsto R_{\kappa(\pi, \phi, \gamma, t)(g)}$, an action which induces a structure of right $\Z[\pi]$-module on $\ell^2(G)$.
Let
$$C_*^{(2)}(X,\phi,\gamma,t) = \ell^2(G) \otimes_{\Z[\pi]}  C_*(\widetilde{X},\Z)$$
denote the finite Hilbert $\NN(G)$-chain complex obtained by tensor product via these left- and right-actions; we call $C_*^{(2)}(X,\phi,\gamma,t)$ \textit{a $\NN(G)$-cellular chain complex of $X$}.

	If $C_*^{(2)}(X,\phi,\gamma,t)$ is a $\NN(G)$-cellular chain complex of $X$, then denote
	$$ T^{(2)}(X,\phi,\gamma)(t) = T^{(2)}\left (C_*^{(2)}(X,\phi,\gamma,t)\right )$$
	the \textit{$L^2$-Alexander torsion of $(X,\phi,\gamma)$ at $t>0$}. It is non-zero if and only if $C_*^{(2)}(X,\phi,\gamma,t)$ is weakly acyclic and of determinant class.
	
For any two CW-complex structures $X, X'$ on a compact $3$-manifold $M$, and $\phi, \gamma$ as above, we have the following property (see for instance \cite{DFL}):
$$ \exists n \in \Z, \ \forall t>0, \ T^{(2)}(X,\phi,\gamma)(t) = t^n \cdot  T^{(2)}(X',\phi,\gamma)(t).$$
We therefore define the equivalence relation $\doteq$ on selfmaps of $\R_{\geqslant 0}$ by 
$$(t \mapsto f(t)) \doteq (t \mapsto g(t)) \ \Leftrightarrow \
\exists n \in \Z, \ \forall t>0, \ f(t)= t^n g(t),$$
and we sometimes abbreviate it $f(t)\doteq g(t)$. Hence we define the $L^2$-Alexander torsion of $M$ as the equivalence class
$T^{(2)}(M,\phi,\gamma)(t) := [T^{(2)}(X,\phi,\gamma)(t)]_{\doteq}$ for any CW-complex structure $X$ on  $M$, and we sometimes write 
$T^{(2)}(M,\phi,\gamma)(t) \doteq T^{(2)}(X,\phi,\gamma)(t)$ instead.
	
Let $L = L_1 \cup \ldots \cup L_c$ be a link in $S^3$, $M_L$ its exterior and $\alpha_L \colon G_L=\pi_1(M_L) \to \Z^c$ the abelianization of its group. Any homomorphism $\phi\colon G_L \to \Z$ factors through $\alpha_L$ and thus is written
$\phi = (n_1, \ldots,n_c) \circ\alpha_L$
where $n_1, \ldots,n_c \in \Z$.
Any admissible triple $(M_L,\phi,\gamma)$ can thus be written 
$(M_L,(n_1, \ldots,n_c) \circ\alpha_L,\gamma)$, and we will  denote
$$T^{(2)}_{L,(n_1, \ldots,n_c)}(\gamma)(t)
:= T^{(2)}(M_L,(n_1, \ldots,n_c) \circ\alpha_L,\gamma)(t)$$
the \textit{twisted $L^2$-Alexander torsion} associated to $L$,  to the coefficients $(n_1, \ldots,n_c)$, and to the morphism $\gamma$ (the \textit{twist}), at the value $t$. We  omit the \textit{twisted} when $\gamma=\id$.

Let us conclude this section by mentioning three important properties of $L^2$-torsions and $L^2$-Alexander torsions.

\begin{prop}\cite[Lemma 3.5]{BAC}\label{prop:irred:zero}
Let $M$ be a compact connected oriented $3$-manifold with empty or toroidal boundary. Let $\phi \in H^1(M,\Z)$. The following are equivalent:
\begin{enumerate}
\item $M$ is reducible (i.e. has a separating sphere),
\item $T^{(2)}(M)=0$,
\item $(t \mapsto T^{(2)}(M,\phi)(t))$ is the zero map.
\end{enumerate}
\end{prop}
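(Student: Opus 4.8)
The plan is to prove the three-way equivalence by establishing a cycle of implications, $(1) \Rightarrow (2) \Rightarrow (3) \Rightarrow (1)$, relying on the multiplicative and connected-sum behaviour of $L^2$-torsion together with the classical fact that lens-space-like summands (and in particular $S^1 \times S^2$) have vanishing $L^2$-torsion.

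First I would handle $(1) \Rightarrow (2)$. Assume $M$ has an essential separating sphere, so $M = M_1 \# M_2$ with neither $M_i$ a ball; enlarging the decomposition if necessary one may assume one summand is a prime non-trivial piece. Using the sum formula for $L^2$-torsion over a sphere (the Mayer--Vietoris / gluing formula for $L^2$-torsions, as in Lück's book, noting that the fundamental group of $S^2$ is trivial so the relevant correction term is controlled), the torsion of $M$ factors through the torsions of the pieces and a term coming from $S^2$. The key point is that $C_*^{(2)}$ of the $2$-sphere is \emph{not} weakly acyclic — $H_*^{(2)}(S^2)$ over the trivial group is just ordinary homology, which is nonzero in degrees $0$ and $2$ — so the gluing forces $T^{(2)}(M) = 0$. (Equivalently: a reducible $3$-manifold has a $\pi_1$ that splits as a free product or has a $\Z$ free factor coming from an $S^1\times S^2$ summand, and in all cases the $L^2$-Betti numbers of $M$ fail to vanish, so $C_*^{(2)}(M)$ is not weakly acyclic and the torsion is $0$ by definition.)

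Next, $(2) \Rightarrow (3)$ is essentially definitional together with the untwisting relation. If $T^{(2)}(M) = T^{(2)}(M,0)=0$, I want to deduce that $T^{(2)}(M,\phi)(t) \equiv 0$ for every $\phi$. The cleanest route is to observe that weak acyclicity and determinant class of $C_*^{(2)}(M,\phi,\id,t)$ at \emph{some} $t$ is what controls non-vanishing, and that reducibility is detected at the level of $L^2$-Betti numbers, which are independent of the twisting cocycle $t^{\phi}$ over the trivial group: if $b_*^{(2)}(M) \neq 0$ then $b_*^{(2)}(M;\phi,t)\neq 0$ for all $t$ as well, because passing from the trivial representation to the $1$-dimensional representation $g \mapsto t^{\phi(g)}$ cannot create acyclicity when the sphere obstruction is still present (the $H_0$ and top-dimensional obstructions survive). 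Hence $C_*^{(2)}(M,\phi,t)$ is never weakly acyclic, so $T^{(2)}(M,\phi)(t) = 0$ for all $t>0$, i.e. the map is identically zero.

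Finally $(3) \Rightarrow (1)$ I would prove by contraposition: if $M$ is irreducible (hence, being compact connected oriented with toroidal-or-empty boundary, either a prime $3$-manifold covered by the geometrization picture), then its universal cover is not $S^2 \times \R$ and one invokes the theorem (Lück, building on work around the Atiyah conjecture and $L^2$-torsion of aspherical manifolds; for the specific statement see the references to \cite{Lu} and \cite{DFL}) that for such $M$ the $L^2$-Alexander torsion $T^{(2)}(M,\phi)(t)$ is a nonzero real number for at least one value of $t$ — indeed for $t$ near $1$ it is positive — so the map $t \mapsto T^{(2)}(M,\phi)(t)$ is not identically zero. The main obstacle is this last implication: it is the only direction that is not soft and genuinely uses deep input (irreducible $\Rightarrow$ $L^2$-acyclic and determinant class, at least generically in $t$), so in the write-up I would quote it carefully from \cite{Lu}/\cite{DFL} rather than reprove it, and merely check that the hypotheses (compact, connected, oriented, toroidal or empty boundary) match.
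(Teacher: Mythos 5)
The paper gives no proof of this proposition (it is quoted from \cite[Lemma 3.5]{BAC}), so your argument has to stand on its own, and as written it has two genuine gaps, both in the substantive directions. First, in $(1)\Rightarrow(2)$ the deduction ``$C^{(2)}_*(S^2)$ is not weakly acyclic, so the gluing forces $T^{(2)}(M)=0$'' is a non sequitur: in a short exact sequence of Hilbert $\NN(G)$-chain complexes, non-acyclicity of the intersection piece does not propagate to the total complex (Proposition \ref{prop:short} only transfers acyclicity when two of the three complexes are acyclic; the $L^2$-homology of the sphere piece can be absorbed by the punctured summands in the long exact sequence). Your fallback claim that ``in all cases the $L^2$-Betti numbers of $M$ fail to vanish'' is false: for $M=\mathbb{R}P^3\#\mathbb{R}P^3$ (closed, oriented, reducible) one has $\pi_1=\Z/2\ast\Z/2$ and, by the Lott--L\"uck computation, $b_1^{(2)}(M)=(r-1)-\sum_j 1/|\pi_1(M_j)|=1-\tfrac12-\tfrac12=0$, so all $L^2$-Betti numbers vanish; in fact this manifold is doubly covered by $S^1\times S^2$ and its $L^2$-torsion equals $1\neq 0$. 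So $(1)\Rightarrow(2)$ cannot be obtained the way you propose: the honest route is the Lott--L\"uck formula for $L^2$-Betti numbers of reducible $3$-manifolds, which is positive in all cases except precisely this one closed manifold --- harmless for the link exteriors to which the paper applies the proposition (there the summands have infinite fundamental groups), but a point any proof in this generality must confront rather than assert away.

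Second, your middle implication $(2)\Rightarrow(3)$ is not proved: you argue from ``some $L^2$-Betti number of $M$ is nonzero,'' but hypothesis $(2)$ only says $T^{(2)}(M)=0$, which could a priori occur through failure of determinant class while weak acyclicity holds; you are implicitly invoking $(2)\Rightarrow(1)$, which you have not established, so your cycle does not close. Note the cheap direction is the reverse one: $(3)\Rightarrow(2)$ is immediate by evaluating at $t=1$, since $T^{(2)}(M,\phi)(1)=T^{(2)}(M)$. A sound architecture is: $(3)\Rightarrow(2)$ trivially; $(2)\Rightarrow(1)$ by contraposition from L\"uck--Schick (Theorem \ref{thm:LS}), exactly as you correctly do for $(3)\Rightarrow(1)$; and then the real content is $(1)\Rightarrow(3)$, where besides the Betti-number computation you must justify that the one-dimensional deformation $g\mapsto t^{\phi(g)}$ cannot create weak acyclicity for $t\neq1$ --- your phrase ``cannot create acyclicity when the sphere obstruction is still present'' asserts exactly the nontrivial point, which requires the twisting results of \cite{DFL} (or an explicit argument), not just the untwisted $L^2$-Betti numbers.
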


\begin{theo}[Lück--Schick \cite{LS}]\label{thm:LS}
	Let $M$ be a compact connected oriented irreducible $3$-manifold with empty or toroidal boundary, and denote $\mathrm{vol}(M) \in \R_{\geqslant 0}$ its simplicial volume. Then
	$$T^{(2)}(M)=\exp\left (\dfrac{\mathrm{vol}(M)}{6\pi}\right ).$$
\end{theo}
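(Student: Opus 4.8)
The plan is to follow the strategy of Lück--Schick: reduce, via the geometric decomposition of $M$, to the case of a single finite-volume hyperbolic piece, and there compute the $L^2$-torsion analytically on $\mathbb{H}^3$. By Gromov's theorem the simplicial volume is, up to the universal normalising constant $v_3$ (the volume of the regular ideal hyperbolic tetrahedron), the sum of the hyperbolic volumes of the hyperbolic pieces in the decomposition, and both sides of the claimed identity are additive along incompressible tori; so the reduction will be legitimate once we have a gluing formula.

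\textbf{Step 1 (gluing and vanishing).} If $T\subset M$ is an incompressible torus and $M=M_1\cup_T M_2$, I would use that $T^2$ is weakly acyclic with $T^{(2)}(T^2)=1$ (Proposition \ref{prop:detFK:properties}(5), applied to the two $\Z$-factors) together with the weakly-exact Mayer--Vietoris sequence for $L^2$-homology and multiplicativity of the Fuglede--Kadison determinant (Proposition \ref{prop:detFK:properties}(1)) to get $\log T^{(2)}(M)=\log T^{(2)}(M_1)+\log T^{(2)}(M_2)$ when all pieces are of determinant class; the reducible case is covered by Proposition \ref{prop:irred:zero}, where both sides vanish. Iterating over the geometric decomposition of a prime $M$, it remains to treat the geometric pieces. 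For a Seifert-fibered, $\mathrm{Sol}$-, $\mathrm{Nil}$- or Euclidean piece $N$, the fundamental group contains, possibly after passing to a finite cover (which rescales $\log T^{(2)}$ and $\mathrm{vol}$ by the same factor), an infinite normal cyclic subgroup, so $T^{(2)}(N)=1$ by the standard vanishing theorem for the $L^2$-torsion of aspherical spaces with a normal infinite amenable subgroup; and $\mathrm{vol}(N)=0$. Hence the statement reduces to $N$ finite-volume hyperbolic.

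\textbf{Step 2 (the hyperbolic case).} For $N=\mathbb{H}^3/\Gamma$ I would pass to the $L^2$-analytic (Ray--Singer) torsion $T^{(2)}_{\mathrm{an}}(N)$. Its zeta-regularised definition makes sense because the $L^2$-Betti numbers of $N$ vanish and the heat traces $\mathrm{tr}_{\mathcal{N}(\Gamma)}(e^{-t\Delta_p})$ decay exponentially as $t\to\infty$ (positivity of Novikov--Shubin invariants). I would then invoke the $L^2$ analogue of the Cheeger--Müller theorem — available here because $3$-manifold groups are residually finite, so the form Laplacians are of determinant class — to identify $T^{(2)}_{\mathrm{an}}(N)$ with the combinatorial torsion $T^{(2)}(N)$ of the statement. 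Finally, by the proportionality principle (the heat kernels on $\mathbb{H}^3$ are $\mathrm{Isom}(\mathbb{H}^3)$-equivariant, so the analytic $L^2$-torsion is a universal density times the volume form) one gets $\log T^{(2)}_{\mathrm{an}}(N)=c\cdot\mathrm{vol}(N)$; an explicit heat-kernel computation on $\mathbb{H}^3$, via the Plancherel formula for the spherical functions on $1$-forms, pins down $c=1/(6\pi)$. The cusped case (nonempty toroidal boundary) I would either treat directly on the complete finite-volume manifold, or by a limiting argument over hyperbolic Dehn fillings $N(p/q)$: these are closed hyperbolic with $\mathrm{vol}(N(p/q))\to\mathrm{vol}(N)$, the glued-in solid tori are weakly acyclic with trivial torsion, and continuity of Fuglede--Kadison determinants gives $T^{(2)}(N(p/q))\to T^{(2)}(N)$.

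\textbf{Main obstacle.} The analytic heart is Step 2: establishing the $L^2$ Cheeger--Müller equality requires controlling the spectral density of $\Delta_p$ near $0$ (determinant class and Novikov--Shubin positivity) in order to push the interpolation between the analytic and combinatorial torsions through, and the precise heat-kernel computation on $\mathbb{H}^3$ fixing the constant $6\pi$ is a nontrivial special-function calculation. Everything in Step 1, by contrast, is essentially formal once the gluing formula for $L^2$-torsion along incompressible tori is in place.
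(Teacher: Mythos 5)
This statement is not proved in the paper at all: it is quoted as a known result of L\"uck--Schick \cite{LS} (combined with the Cheeger--M\"uller-type identification of analytic and combinatorial $L^2$-torsion in \cite{Lu}), so there is no internal proof to compare against. Your outline is essentially a faithful sketch of that literature proof: sum formula for $L^2$-torsion along the incompressible tori of the geometric decomposition, vanishing on Seifert/graph pieces, and the analytic computation on $\mathbb{H}^3$ giving the constant $1/(6\pi)$, with the identification of analytic and cellular $L^2$-torsion resting on determinant class (via residual finiteness and L\"uck's approximation) — and you correctly identify Step 2 as the analytic heart.

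One caveat on your proposed shortcut for the cusped case: the ``limiting argument over hyperbolic Dehn fillings'' is not available, because Fuglede--Kadison determinants (and hence $L^2$-torsions) are not continuous under this kind of geometric limit — one only has semicontinuity-type statements in general, and convergence $T^{(2)}(N(p/q))\to T^{(2)}(N)$ is in fact a consequence of the theorem itself together with Thurston--J\o rgensen volume convergence, so invoking it here would be circular. This is precisely why \cite{LS} treat complete finite-volume hyperbolic manifolds directly, via a careful analysis of analytic torsion on the cusped pieces, which is the route you list as your first option and the one you should keep. A second, cosmetic point: with $\mathrm{vol}(M)$ read literally as the simplicial volume, the formula should carry the normalisation by $v_3$ (Gromov's proportionality); in the paper, as in much of this literature, $\mathrm{vol}(M)$ is shorthand for the sum of the hyperbolic volumes of the hyperbolic pieces, which is how you use it.
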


\begin{theo}[Liu \cite{Liu}]\label{thm:liu}
	Let $M$ be a compact connected oriented irreducible $3$-manifold with empty or toroidal boundary, and let $\phi \in H^1(M,\Z)$. Then we can extract the Thurston norm $x_M(\phi)$ from the function $(t \mapsto T^{(2)}(M,\phi)(t))$ (as a difference of asymptotical degrees).
\end{theo}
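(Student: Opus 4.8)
The plan is to show that a representative $f(t):=T^{(2)}(M,\phi)(t)$ of the $L^2$-Alexander torsion is an everywhere-positive, continuous and \emph{multiplicatively convex} function of $t\in\R_{>0}$ (that is, $s\mapsto \ln f(e^s)$ is convex), so that the two asymptotic slopes
$$
\deg_+ f:=\lim_{t\to+\infty}\frac{\ln f(t)}{\ln t},\qquad
\deg_- f:=\lim_{t\to 0^+}\frac{\ln f(t)}{\ln t}
$$
are well defined in $\R$; and then to identify the \emph{degree} $\deg_+ f-\deg_- f$ with the Thurston norm $x_M(\phi)$. Since $f$ is only defined up to multiplication by some $t^m$ (the relation $\doteq$), this difference of slopes is exactly the part of $f$ that does not depend on the chosen CW-structure, hence a legitimate quantity to extract.

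First I would establish the analytic properties. Positivity is where three-manifold topology enters: as $M$ is irreducible we cannot conclude directly from Proposition \ref{prop:irred:zero}, but combining Agol's virtual fibering theorem (and the Wise machinery behind it), residual finiteness of three-manifold groups, the strong Atiyah conjecture (cf.\ Remark \ref{rem:atiyah}) and L\"uck's approximation theorem shows that $C_*^{(2)}(M,\phi,\id,t)$ is weakly acyclic and of determinant class for every $t>0$, hence $f(t)>0$. Continuity of $f$ and multiplicative convexity of $\ln f(e^s)$ then follow from a general property of Fuglede--Kadison determinants: if $A(t)$ is a square matrix whose entries are the images under $\kappa(\pi,\phi,\id,t)$ of fixed elements of $\Z\pi$, then $t\mapsto \ln\det{}_G(A(t))$ is continuous and multiplicatively convex --- morally because, through the Mahler-measure/spectral picture of Proposition \ref{prop:detFK:properties} (7)--(8), $\det{}_G(A(t))$ is an average of terms $\ln|\,\cdot\,|$ of Laurent-polynomial-like functions of $t$, each of which is multiplicatively convex. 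Convexity then guarantees that $\deg_\pm f$ exist.

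It remains to compute the degree; one may assume $\phi$ primitive. The inequality $\deg_+ f-\deg_- f\le x_M(\phi)$ is the ``elementary'' half (essentially the bound already obtained by Friedl and L\"uck): choose a CW-structure on $M$ adapted to a Thurston-norm-minimizing surface $S$ Poincar\'e-dual to $\phi$, i.e.\ build $M$ from a product region $S\times[-1,1]$ by attaching $1$-, $2$- and $3$-handles that meet $S$ in a controlled way. In the resulting $\NN(G)$-chain complex the boundary operators are matrices over $\R G$ whose entries are $\R_{>0}$-combinations of group elements with $t$-exponents bounded by those intersection numbers; multiplicativity together with the estimate $\det{}_G(\Id-tR_g)=\max(1,|t|)$ of Proposition \ref{prop:detFK:properties} (5) then controls both slopes of the alternating product, and an optimal handle structure pushes the bound down to $-\chi(S)=x_M(\phi)$. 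The reverse inequality $\deg_+ f-\deg_- f\ge x_M(\phi)$ is the deep half: here I would invoke Gabai's theorem that $S$ can be taken as the first decomposing surface of a taut sutured manifold hierarchy of $M$, together with a surface-decomposition (gluing) formula expressing $T^{(2)}(M,\phi)(t)$ through the $L^2$-torsions of the pieces cut along the hierarchy; running the hierarchy down to product sutured pieces --- on which the $L^2$-torsion is computed directly and contributes exactly Euler-characteristic terms --- makes the contributions telescope and forces the degree to be at least $-\chi(S)$. (Alternatively one can bypass the hierarchy by virtual fibering: pass to a finite cover $\widehat M\to M$ in which the pulled-back class $\widehat\phi$ is fibered, where $T^{(2)}(\widehat M,\widehat\phi)$ is the torsion of a mapping torus and has degree exactly $x_{\widehat M}(\widehat\phi)$ by a Reidemeister--Milnor--Wang-type computation, and then descend using the behaviour of both $L^2$-torsion and the Thurston norm under finite covers.)

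The hardest step will be the gluing/decomposition formula for the $L^2$-Alexander torsion: unlike ordinary Reidemeister torsion it is multiplicative under a gluing only when the associated Mayer--Vietoris $L^2$-homology vanishes, so the real work is to propagate weak acyclicity and determinant class simultaneously through every stage of the sutured hierarchy (or, in the virtual-fibering route, to control the interaction of the torsion \emph{function} with passage to finite covers). By comparison, the analytic input is a formal consequence of Proposition \ref{prop:detFK:properties}, and the ``elementary'' upper bound is a routine if delicate exercise in handle calculus.
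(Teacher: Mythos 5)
The paper does not prove this statement at all: it is quoted as background, with the proof deferred entirely to Liu's paper, so the only meaningful comparison is with Liu's actual argument — and as a proof your sketch has genuine gaps exactly where the theorem is hard. The most serious one is your claim that continuity and multiplicative convexity of $t\mapsto \det{}_G(A(t))$ are ``a formal consequence of Proposition \ref{prop:detFK:properties}'': item (7) is an identity for $G=\Z^d$ only, and for the non-abelian group $\pi_1(M)$ there is no Mahler-measure/integral representation of the Fuglede--Kadison determinant, so the ``average of $\ln|\cdot|$ of Laurent-polynomial-like functions'' picture does not exist. Establishing continuity, multiplicative convexity, and positivity (weak acyclicity plus determinant class for every $t>0$) of the torsion function is precisely the central technical content of Liu's paper, carried out via delicate approximation estimates for determinants of matrices over $\R\pi$; note also that L\"uck-type approximation in its standard form concerns matrices with coefficients in $\Z G$ (or algebraic coefficients), so the presence of the real parameter $t$ already blocks the ``residual finiteness $+$ Atiyah $+$ approximation'' shortcut you propose for positivity.

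The lower bound $\deg \geqslant x_M(\phi)$ is also not obtainable by the route you sketch: there is no gluing formula in the literature for $L^2$-Alexander torsion \emph{functions} along the decomposing surfaces of a sutured hierarchy with the required propagation of weak acyclicity and determinant class (you flag this as the hardest step but supply no argument, and Proposition \ref{prop:short}/\ref{prop:gluing} of the present paper only help once two of the three complexes are already known to be weakly acyclic and of determinant class, which is exactly what one cannot assume along the hierarchy). Liu's actual proof goes through Agol's RFRS/virtual fibering together with the convexity and continuity statements and the exact computation for fibered classes; your alternative virtual-fibering route is closer to that, but the descent step still needs a precise covering formula relating $T^{(2)}(\widehat M,p^*\phi)$ to data on $M$, the behaviour of the Thurston norm under finite covers (Gabai), and care with the fact that $p^*\phi$ need not be primitive or fibered in the cover, only quasi-fibered — none of which is supplied. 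In short, the architecture you describe (positivity and multiplicative convexity, upper bound by handle calculus, lower bound by virtual fibering) matches Liu's, but the steps you treat as formal or routine are the theorem.
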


\section{Some useful properties of $L^2$-torsions}\label{sec:appendix}

In this section we recall and generalize several natural properties of 
$L^2$-torsions, that will be used in the proof of Theorem \ref{thm:burau:alexander:twisted}.

The following Proposition \ref{prop:short} is a rephrasing of several results in \cite{Lu}, which concern short exact sequences of finite Hilbert $\NN(G)$-chain complexes. 
Recall that $0\to C_* \overset{\eta_*}{\to} D_* \overset{\rho_*}{\to} E_* \to 0$ is a \textit{short exact sequence of finite Hilbert $\NN(G)$-chain complexes} if $0\to C_p \overset{\eta_p}{\to} D_p \overset{\rho_p}{\to} E_p \to 0$ is exact for every $p$ and if $\eta_*, \rho_*$ commute with the boundary operators of $C_*,D_*, E_*$.

\begin{prop}[\cite{Lu}]\label{prop:short}
	Let $0\to C_* \overset{\eta_*}{\to} D_* \overset{\rho_*}{\to} E_* \to 0$ be a short exact sequence of finite Hilbert $\NN(G)$-chain complexes, such that for every $p \in \Z$, $\eta_p$ and $\rho_p$ are of determinant class. Then the following hold:
	\begin{enumerate}
		\item If two among $C_*,D_*, E_*$ are weakly acyclic, then the third one is as well.
		\item If  $C_*,D_*, E_*$ are all weakly acyclic, and if two of them are of determinant class, then the third one is as well.
		\item If  either $C_*,D_*, E_*$ are all weakly acyclic and of determinant class, or if $D_*$ is not, then the $L^2$-torsions satisfy
		$$T^{(2)}(D_*) \cdot \left (\prod_{p \in \Z} \left (\dfrac{\det_G(\rho_p)}{\det_G(\eta_p)}\right )^{(-1)^p}\right )
		=T^{(2)}(C_*) \cdot T^{(2)}(E_*).$$
	\end{enumerate}
\end{prop}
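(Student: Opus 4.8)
The plan is to deduce all three statements from two standard tools for finite Hilbert $\NN(G)$-chain complexes, both due to Lück: the long weakly exact $L^2$-homology sequence attached to a short exact sequence of such complexes, and the mapping-cone (equivalently, ``based sum'') formula for the $L^2$-torsion. The role of the hypothesis that each $\eta_p$ and $\rho_p$ is of determinant class will be to ensure that the auxiliary complexes and chain maps produced along the way are themselves of determinant class, so that every torsion that appears is either a well-defined positive real or genuinely equal to $0$.

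For statement (1), which in fact does not use the determinant-class hypotheses, I would invoke the long weakly exact sequence
$$\cdots \longrightarrow H_p^{(2)}(C_*) \longrightarrow H_p^{(2)}(D_*) \longrightarrow H_p^{(2)}(E_*) \longrightarrow H_{p-1}^{(2)}(C_*) \longrightarrow \cdots$$
in which the image of each arrow is dense in the kernel of the next. If $C_*$ and $E_*$ are weakly acyclic, then $H_p^{(2)}(D_*)$ is squeezed between two trivial modules and hence vanishes. If $C_*$ and $D_*$ are weakly acyclic, then the arrow $H_p^{(2)}(D_*)\to H_p^{(2)}(E_*)$ has trivial source and dense image while the next arrow $H_p^{(2)}(E_*)\to H_{p-1}^{(2)}(C_*)$ has trivial target, so weak exactness forces $\{0\}$ to be dense in $H_p^{(2)}(E_*)$; a Hilbert space admitting a dense trivial subspace is trivial, so $H_p^{(2)}(E_*)=0$. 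The case ``$D_*$ and $E_*$ weakly acyclic'' is symmetric.

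For (2) and (3) I would first assume that $C_*$, $D_*$ and $E_*$ are all weakly acyclic. Picking a bounded section of each $\rho_p$ identifies $D_p$ with $C_p\oplus E_p$, under which the differential $\partial^D_p$ becomes block upper-triangular with diagonal blocks $\partial^C_p$ and $\partial^E_p$ together with off-diagonal ``connecting'' operators. Keeping track of how the product $\prod_i\det_G(\partial_i)^{(-1)^i}$ transforms under this triangularization — the chosen sections are not isometric, and the resulting discrepancy at level $p$ is exactly accounted for by $\det_G(\eta_p)$ and $\det_G(\rho_p)$ — produces a master identity of the form
$$T^{(2)}(D_*)\cdot\prod_{p\in\Z}\left(\frac{\det_G(\rho_p)}{\det_G(\eta_p)}\right)^{(-1)^p} = T^{(2)}(C_*)\cdot T^{(2)}(E_*)\cdot T^{(2)}(\mathrm{LHS}_*),$$
where $\mathrm{LHS}_*$ denotes the long $L^2$-homology sequence above, regarded as a finite Hilbert $\NN(G)$-chain complex; the same identity can be obtained by applying the mapping-cone torsion formula to $\eta_*\colon C_*\to D_*$ together with the natural comparison between that cone and $E_*$. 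Since all three complexes are weakly acyclic, $\mathrm{LHS}_*$ is the zero complex and $T^{(2)}(\mathrm{LHS}_*)=1$, so the identity becomes the formula claimed in (3). Because $\det_G(\eta_p)$ and $\det_G(\rho_p)$ are finite and nonzero by hypothesis, the left-hand side is finite and nonzero if and only if $T^{(2)}(D_*)\neq 0$, while the right-hand side is if and only if both $T^{(2)}(C_*)\neq 0$ and $T^{(2)}(E_*)\neq 0$; this is exactly statement (2). It remains to treat the last case of (3): if $D_*$ is weakly acyclic but not of determinant class, or not weakly acyclic, then $C_*$ and $E_*$ cannot both be weakly acyclic and of determinant class, for otherwise (1) would force $D_*$ weakly acyclic and (2) would then force it of determinant class; hence $T^{(2)}(C_*)\cdot T^{(2)}(E_*)=0=T^{(2)}(D_*)$ and both sides of the formula in (3) vanish.

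The hard part will be the bookkeeping behind the master identity: one has to set up the levelwise splittings (or, equivalently, the mapping cone and its comparison map) with care, check that the determinant-class hypotheses on the $\eta_p$ and $\rho_p$ propagate through every auxiliary complex so that no ill-defined expression of the form ``$0\cdot\infty$'' ever occurs, and pin down the exponents $(-1)^p$ and the placement of $\det_G(\eta_p)$ versus $\det_G(\rho_p)$ exactly — this is where the suspension shift in the mapping-cone description and the $G$-equivariant polar-decomposition definition of $\det_G$ on non-square operators must both be handled precisely. Once the identity is in place, everything else is a formal consequence of weak exactness of the homology sequence and the multiplicativity of $\det_G$.
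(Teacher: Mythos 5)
Your plan follows essentially the same route as the paper. Part (1) is the paper's argument verbatim in spirit: both use the long weakly exact $L^2$-homology sequence of \cite[Theorem 1.21]{Lu} and the squeezing/weak-exactness observation. For (2) and (3), the paper simply cites Lück's additivity theorem \cite[Theorem 3.35 (1)]{Lu} together with the remark that the triviality of $LHS_*$ makes it of determinant class with $T^{(2)}(LHS_*)=1$; your ``master identity'' is exactly that theorem, and your plan to re-derive it via level-wise bounded splittings of $\rho_p$ (equivalently mapping cones) is the standard proof of it, so the substance agrees, with the difference that you re-prove the key lemma where the paper cites it, and that you spell out the degenerate case of (3) and the formal deduction of (2), which the paper leaves implicit.

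One caveat in the logical order of your deduction of (2): you propose to establish the master identity assuming only that $C_*,D_*,E_*$ are weakly acyclic, and then to read off determinant-class propagation from non-vanishing of the two sides. With the convention that $T^{(2)}=0$ for a weakly acyclic complex that is not of determinant class, the identity is false precisely in the situation that (2) rules out (for instance $C_*,E_*$ of determinant class and $D_*$ not), so it cannot be proved ``for all weakly acyclic complexes'' first and (2) extracted afterwards; the determinant-class conclusion for the third complex has to be produced inside the splitting/mapping-cone argument itself. This is how Lück's Theorem 3.35 (1) is organized (determinant class of two of the three complexes, of $LHS_*$, and of the level maps implies determinant class of the third, together with the formula), and it is also where the block-triangular determinant manipulations need injectivity/determinant-class input that is not automatic for the non-injective differentials of a weakly acyclic complex. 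Once the argument is phrased that way, your treatment of (1), of (2), and of the vanishing case of (3) is correct and coincides with what the paper obtains by citation.
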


\begin{proof}
	Let us prove (1). If two among $C_*,D_*, E_*$ are weakly acyclic, then the long weakly exact homology sequence of finite Hilbert $\NN(G)$-modules
	$$LHS_* =  \ \ldots \to H^{(2)}_{n+1}(E_*) \to H^{(2)}_{n}(C_*) \to  H^{(2)}_{n}(D_*) \to H^{(2)}_{n}(E_*) \to \ldots $$
	of \cite[Theorem 1.21]{Lu} is trivial, and thus $C_*,D_*, E_*$ are all weakly acyclic.
	
	Now, (2) and (3) follow from \cite[Theorem 3.35 (1)]{Lu}, the assumptions  on $\iota_*$, $\rho_*$, and the fact that $LHS_*$ is trivial (which implies that $LHS_*$ is of determinant class and that its $L^2$-torsion is equal to $1$).
\end{proof}

The following proposition is a slight generalization of \cite[Theorem 2.12]{BA}, and concerns the invariance under simple homotopy equivalence.

\begin{prop}\label{prop:simple}
	Let $f: X \to Y$ be a simple homotopy equivalence between two finite CW-complexes inducing the group isomorphism $f_\sharp\colon \pi_1(X) \to \pi_1(Y)$. The triple $(Y, \phi, \gamma)$ is an
	admissible triple if and only if $(X, \phi \circ f_{\sharp} , \gamma \circ f_{\sharp})$ is one. Moreover, for all $t>0$:
	\begin{enumerate}
		\item $C^{(2)}_*(X, \phi \circ f_{\sharp} , \gamma \circ f_{\sharp}, t)$ is weakly acyclic if and only if $C^{(2)}_*(Y, \phi, \gamma, t)$ is,
		\item $C^{(2)}_*(X, \phi \circ f_{\sharp} , \gamma \circ f_{\sharp}, t)$ is weakly acyclic and of determinant class if and only if $C^{(2)}_*(Y, \phi, \gamma, t)$ is,
		\item $T^{(2)}(X, \phi \circ f_{\sharp} , \gamma \circ f_{\sharp})(t) \ \doteq \ T^{(2)}(Y, \phi, \gamma)(t).$
	\end{enumerate}
\end{prop}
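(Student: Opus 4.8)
The plan is to reduce everything to the algebraic statement that simple homotopy equivalences do not change $L^2$-torsion up to the $\doteq$-ambiguity, which is essentially \cite[Theorem 2.12]{BA}; the only novelty here is keeping track of the twist $\gamma$ in addition to $\phi$, so the work is to check that the constructions are natural in $\gamma$. First I would verify the admissibility equivalence: since $f_\sharp$ is an isomorphism, $\phi$ factors through $\gamma$ on $\pi_1(Y)$ precisely when $\phi\circ f_\sharp$ factors through $\gamma\circ f_\sharp$ on $\pi_1(X)$ (compose/precompose with $f_\sharp^{\pm 1}$). Next I would observe that $\kappa(\pi_1(X),\phi\circ f_\sharp,\gamma\circ f_\sharp,t) = \kappa(\pi_1(Y),\phi,\gamma,t)\circ f_\sharp$ as ring homomorphisms $\Z[\pi_1(X)]\to\R[G]$, because both $\phi$ and $\gamma$ get precomposed with $f_\sharp$ while the $t$-power and group-element data are unchanged; hence the right $\Z[\pi_1(X)]$-module structure on $\ell^2(G)$ used to build $C^{(2)}_*(X,\phi\circ f_\sharp,\gamma\circ f_\sharp,t)$ is exactly the restriction of scalars along $f_\sharp$ of the one used for $Y$.

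With that identification in place, the core input is the classical fact that a simple homotopy equivalence $f\colon X\to Y$ induces, after lifting to universal covers and choosing cellular approximations, a chain homotopy equivalence $C_*(\widetilde X,\Z)\to C_*(\widetilde Y,\Z)$ of finite free $\Z[\pi_1]$-complexes whose Whitehead torsion vanishes; more precisely $C_*(\widetilde Y,\Z)$ is obtained from $C_*(\widetilde X,\Z)$ (after the identification of fundamental groups via $f_\sharp$) by a finite sequence of elementary expansions/collapses and simple isomorphisms. Tensoring the whole sequence over $\Z[\pi_1(X)]$ with $\bigl(\ell^2(G),\,R_{\kappa(\dots)}\bigr)$ turns it into a sequence of operations on finite Hilbert $\NN(G)$-chain complexes — stabilization by short based acyclic complexes of the form $\ell^2(G)\xrightarrow{\Id}\ell^2(G)$, and changes of basis realized by invertible right-multiplication operators with Fuglede--Kadison determinant $1$. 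I would then invoke Proposition \ref{prop:short} (applied to the relevant short exact sequences with the elementary complexes) together with multiplicativity and the behaviour on $\Id$ from Proposition \ref{prop:detFK:properties} to conclude that weak acyclicity is preserved, the determinant-class condition is preserved, and the $L^2$-torsions agree up to a factor that is a Fuglede--Kadison determinant of a monomial right-multiplication operator $R_{t^{m}g}$, which by Proposition \ref{prop:detFK:properties}(5) is $\max(1,|t|)^{m}$ — i.e.\ agree up to $\doteq$ after also absorbing the $t^{\phi}$-twist in the usual way; this gives (1), (2) and (3) simultaneously.

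The main obstacle I expect is purely bookkeeping rather than conceptual: one must make sure the proof of \cite[Theorem 2.12]{BA} goes through verbatim with the extra morphism $\gamma$ present, i.e.\ that every elementary move in the simple-homotopy sequence, and every basis change, is still ``simple'' after applying the twisted functor $\ell^2(G)\otimes_{\Z[\pi_1],\,\kappa(\dots,\gamma,t)}(-)$. Since $\kappa$ is a ring homomorphism sending group elements to norm-one-up-to-$t^{\phi}$ elements of $\R G$ that are units, it carries $\Z[\pi_1]$-bases to $\NN(G)$-bases and elementary $\Z[\pi_1]$-matrices to operators of Fuglede--Kadison determinant equal to a power of $t$, so no new phenomenon arises; the $\doteq$ in (3) is exactly what accommodates these $t$-powers. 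I would therefore present the argument as: (i) the admissibility and $\kappa$-compatibility lemma above, then (ii) ``now the proof of \cite[Theorem 2.12]{BA} applies mutatis mutandis, the only modification being the presence of $\gamma$, which is handled by the fact that $\kappa(\pi_1(X),\phi\circ f_\sharp,\gamma\circ f_\sharp,t)=\kappa(\pi_1(Y),\phi,\gamma,t)\circ f_\sharp$,'' citing Propositions \ref{prop:short} and \ref{prop:detFK:properties} for the torsion computation.
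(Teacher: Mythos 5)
Your proposal is correct and follows essentially the same route as the paper: reduce to elementary expansions/collapses, relate $C^{(2)}_*(X,\phi\circ f_\sharp,\gamma\circ f_\sharp,t)$ and $C^{(2)}_*(Y,\phi,\gamma,t)$ by a short exact sequence involving the elementary acyclic complexes, and conclude with Proposition \ref{prop:short} and the determinant properties, exactly as in the cited adaptation of \cite[Theorem 2.12]{BA}. One small slip: the Fuglede--Kadison determinant of the monomial operator $t^{m}R_g$ is $t^{m}$ (by Proposition \ref{prop:detFK:properties} (3) and (9)), not $\max(1,t)^{m}$ via (5) — harmless, since a power of $t$ is precisely what $\doteq$ absorbs.
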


\begin{proof}
	The proof is almost exactly as the one of \cite[Theorem 2.12]{BA}: we study the case where $f$ is an elementary expansion, and we relate $C^{(2)}_*(X, \phi \circ f_{\sharp} , \gamma \circ f_{\sharp}, t)$ and $C^{(2)}_*(Y, \phi, \gamma, t)$ through an exact sequence. Then we apply Proposition \ref{prop:short}.
\end{proof}

The following Proposition \ref{prop:gluing} is a slight generalization of the gluing formula of \cite[Theorem 3.1]{BA} and \cite[Proposition 3.5]{BA2} for $L^2$-Alexander torsions (which only stated that (1) and (2) together imply (3)).

\begin{prop}\label{prop:gluing} 
	Let $X,A,B,V$ be finite CW-complexes such that  $X = A \cup B$ and $V= A \cap B$. We denote the various inclusions (which are assumed to be cellular) and their inductions on fundamental groups as in the following diagrams:
	
	\begin{tikzpicture}
	[description/.style={fill=white,inner sep=2pt}] 
	\matrix(a)[matrix of math nodes, row sep=2em, column sep=2.5em, text height=1.5ex, text depth=0.25ex] 
	{ & A\\ V & & X\\ & B\\}; 
	\path[->](a-2-1) edge node[below]{$I_B$} (a-3-2); 
	\path[->](a-2-1) edge node[above]{$I_A$} (a-1-2);  
	\path[->](a-1-2) edge node[above]{$J_A$} (a-2-3); 
	\path[->](a-3-2) edge node[below]{$J_B$} (a-2-3); 
	\path[->](a-2-1) edge node[above]{$I$} (a-2-3);  
	
	\begin{scope}[xshift=6cm,rotate=0,scale=1]
	[description/.style={fill=white,inner sep=2pt}] 
	\matrix(a)[matrix of math nodes, row sep=2em, column sep=2.5em, text height=1.5ex, text depth=0.25ex] 
	{ & \pi_1(A)\\ \pi_1(V) & & \pi_1(X) & G\\ & \pi_1(B) & & \Z\\}; 
	\path[->](a-2-1) edge node[below]{$i_B$} (a-3-2); 
	\path[->](a-2-1) edge node[above]{$i_A$} (a-1-2);  
	\path[->](a-1-2) edge node[above]{$j_A$} (a-2-3); 
	\path[->](a-3-2) edge node[below]{$j_B$} (a-2-3); 
	\path[->](a-2-1) edge node[above]{$i$} (a-2-3);
	\path[->](a-2-3) edge node[above]{$\gamma$} (a-2-4);
	\path[->](a-2-3) edge node[below]{$\phi$} (a-3-4);
	\path[->, dotted](a-2-4) edge  (a-3-4);
	\end{scope}
	\end{tikzpicture}
	
	Let $(\pi_1(X),\phi\colon \pi_1(X) \to \Z,\gamma\colon \pi_1(X) \to G)$ be an admissible triple, and $t>0$. 
	If any two of the following properties are satisfied,
	\begin{enumerate}
		\item $C^{(2)}_*(V,\phi \circ i, \gamma \circ i,t)$  is weakly acyclic (resp. weakly acyclic and of determinant class),
		\item $C^{(2)}_*(A,\phi \circ j_A, \gamma \circ j_A,t)$ and $C^{(2)}_*(B,\phi \circ j_B, \gamma \circ j_B,t)$ are weakly acyclic (resp. weakly acyclic and of determinant class),
		\item $C^{(2)}_*(X,\phi, \gamma,t)$ is weakly acyclic (resp. weakly acyclic and of determinant class),
	\end{enumerate}
	then the third property is satisfied as well, and we have
	$$T^{(2)}(X,\phi, \gamma)(t)
	\ \dot{=} \ \dfrac{T^{(2)}(A,\phi \circ j_A, \gamma \circ j_A)(t) \cdot 
		T^{(2)}(B,\phi \circ j_B, \gamma \circ j_B)(t)}{T^{(2)}(V,\phi \circ i, \gamma \circ i)(t)}.
	$$
\end{prop}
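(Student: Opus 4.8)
The plan is to set up the Mayer--Vietoris short exact sequence of cellular chain complexes and then feed it into Proposition \ref{prop:short}. Since $X = A \cup B$ and $V = A \cap B$ with all inclusions cellular, the cellular chain complex of $\widetilde{X}$ decomposes, and after tensoring with $\ell^2(G)$ over $\Z[\pi_1(X)]$ via $\kappa(\pi_1(X),\phi,\gamma,t)$ one obtains a short exact sequence of finite Hilbert $\NN(G)$-chain complexes
$$0 \to C^{(2)}_*(V,\phi\circ i,\gamma\circ i,t) \overset{\eta_*}{\longrightarrow} C^{(2)}_*(A,\phi\circ j_A,\gamma\circ j_A,t) \oplus C^{(2)}_*(B,\phi\circ j_B,\gamma\circ j_B,t) \overset{\rho_*}{\longrightarrow} C^{(2)}_*(X,\phi,\gamma,t) \to 0,$$
where $\eta_*$ is induced by $(I_A,I_B)$ (with a sign on one component) and $\rho_*$ by $J_A - J_B$. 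The first thing I would check carefully is that this is genuinely a short exact sequence: exactness on the nose at chain level follows because each cell of $X$ is a cell of exactly one of $A$, $B$ (cells of $V$ being counted in both), so in each degree $p$ one has $\Z[\pi_1(X)]$-module bases that match up; tensoring with $\ell^2(G)$ preserves this exactness because the relevant modules are free. I also need the compatibility of the admissible-triple structures: $\phi$ restricts along $i, j_A, j_B$ to the homomorphisms appearing in the statement, and likewise for $\gamma$, so the boundary maps and the $\kappa$-twists are all compatible and $\eta_*,\rho_*$ commute with the differentials.

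Next I would verify the determinant-class hypothesis on $\eta_p$ and $\rho_p$ needed to apply Proposition \ref{prop:short}. These maps are, in suitable bases, block matrices that are either identity blocks or zero blocks (inclusion-induced maps on cellular chains send basis cells to basis cells), so the associated right-multiplication operators are, up to reordering of coordinates, block-diagonal with identity and zero blocks; hence they are of determinant class and in fact $\det_G(\eta_p) = \det_G(\rho_p)$ for every $p$ — more precisely, the contributions that are not identity cancel in the ratio $\det_G(\rho_p)/\det_G(\eta_p)$. This is the point where one must be slightly careful, because $\eta_*$ carries signs coming from the Mayer--Vietoris convention; but signs do not affect Fuglede--Kadison determinants (Proposition \ref{prop:detFK:properties}(9) with $|\lambda| = 1$), so $\prod_p (\det_G(\rho_p)/\det_G(\eta_p))^{(-1)^p} = 1$.

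With the short exact sequence and the determinant-class conditions on $\eta_*,\rho_*$ in hand, Proposition \ref{prop:short}(1) gives the weak-acyclicity part: if two of $C^{(2)}_*(V,\ldots)$, $C^{(2)}_*(A,\ldots)\oplus C^{(2)}_*(B,\ldots)$, $C^{(2)}_*(X,\ldots)$ are weakly acyclic then so is the third, and weak acyclicity of the direct sum is equivalent to weak acyclicity of both summands. Proposition \ref{prop:short}(2) promotes this to the ``weakly acyclic and of determinant class'' version, using that the $L^2$-torsion of a direct sum is the product of the $L^2$-torsions and that a direct sum is of determinant class iff both summands are. Finally Proposition \ref{prop:short}(3), together with the computation that the correction factor $\prod_p(\det_G(\rho_p)/\det_G(\eta_p))^{(-1)^p}$ equals $1$, yields
$$T^{(2)}(X,\phi,\gamma)(t) \cdot T^{(2)}(V,\phi\circ i,\gamma\circ i)(t) \ \dot{=}\ T^{(2)}(A,\phi\circ j_A,\gamma\circ j_A)(t)\cdot T^{(2)}(B,\phi\circ j_B,\gamma\circ j_B)(t),$$
which rearranges to the stated formula; the $\dot{=}$ (rather than equality) absorbs the ambiguity already present in the $L^2$-Alexander torsion of a manifold, though at the level of fixed CW-structures one in fact gets genuine equality. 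The main obstacle I anticipate is purely bookkeeping: being scrupulous about the Mayer--Vietoris signs and the ordering of basis cells so that $\eta_p$ and $\rho_p$ are literally the block forms claimed, and confirming that no genuine (non-unit) determinant survives in the correction factor — once that is nailed down, everything else is a direct invocation of Proposition \ref{prop:short}.
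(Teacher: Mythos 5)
Your proposal is correct and follows essentially the same route as the paper: the paper also forms the Mayer--Vietoris short exact sequence $0\to V_*\to A_*\oplus B_*\to X_*\to 0$ of finite Hilbert $\NN(G)$-chain complexes (citing the classical identification of the induced coefficient systems from \cite[Theorem 3.1]{BA}), notes the horizontal maps are of determinant class, and then applies Proposition \ref{prop:short} together with the computation of the Fuglede--Kadison determinants of $\eta_p,\rho_p$, whose contributions cancel exactly as you observe.
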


\begin{proof}
	The proof works similarly as the one of \cite[Theorem 3.1]{BA}. Let us now sketch the modified arguments.
	Let $V_*, X_*$ denote the finite Hilbert $\NN(G)$-chain complexes of properties (1) and (3), and $C_*$ the direct sum of the two finite Hilbert $\NN(G)$-chain complexes in property (2). Observe that property (2) can be rephrased as $C_*$ being weakly acyclic (resp. weakly acyclic and of determinant class).
	As explained in \cite[Theorem 3.1]{BA} (via classical arguments), we have an exact sequence of finite Hilbert $\NN(G)$-chain complexes $0\to V_* \to C_* \to X_* \to 0$, where the horizontal operators are of determinant class. The result then follows from Proposition \ref{prop:short} and the computation of Fuglede--Kadison determinants of the horizontal operators.
\end{proof}

The following Proposition \ref{prop:Torres} is a slight generalization of the $L^2$-Torres formula of \cite[Theorem 4.4]{BA2} (which only stated that (1) implies (2) instead of their equivalence).

\begin{prop}\label{prop:Torres}
	Let $L=L_1\cup\ldots \cup L_c$ be a $c$-component link, and $L'=L \cup L_{c+1}$ a $(c+1)$-component link  admitting $L$ as a sublink. Let $M_L, M_{L'}$ denote the exteriors of $L$ and $L'$. Let $Q\colon \pi_1(M_{L'}) \twoheadrightarrow \pi_1(M_L)$ denote the group epimorphism induced by removing the component $L_{c+1}$. Let $\lambda \in \pi_1(M_{L'})$ denote the class of a preferred longitude of $L_{c+1}$.
	
	Let $\phi: \pi_1(M_{L}) \to \Z$ and $\gamma: \pi_1(M_{L}) \to G$ be group homomorphisms such that $(\pi_1(M_{L}),\phi,\gamma)$ forms an admissible triple.
	We can write $\phi = (n_1, \ldots, n_{c}) \circ \alpha_{L}$ and thus 
	$\phi \circ Q = (n_1, \ldots, n_{c}, 0) \circ \alpha_{L'}$ for some non zero vector $(n_1, \ldots, n_{c}) \in \Z^{c}$.
	
	Assume that $(\gamma \circ Q)(\lambda)$ is of infinite order in $G$. Then for all $t>0$, the following are equivalent:
	\begin{enumerate}
		\item $C_*^{(2)}(M_{L'},(n_1, \ldots, n_{c}, 0) \circ \alpha_{L'},\gamma \circ Q)(t)$ is weakly acyclic (resp. weakly acyclic and of determinant class),
		\item $C_*^{(2)}(M_{L},(n_1, \ldots, n_{c}) \circ \alpha_{L},\gamma)(t)$ is weakly acyclic (resp. weakly acyclic and of determinant class).
	\end{enumerate}
	Moreover, we have
	$$T^{(2)}_{L,(n_1, \ldots, n_{c})}(\gamma)(t)  \ \dot{=} \ 
	\dfrac{T^{(2)}_{L',(n_1, \ldots, n_{c}, 0)}(\gamma \circ Q)(t)}
	{\max(1,t)^{|\mathrm{lk}(L_1,L_{c+1}) n_1 + \ldots + \mathrm{lk}(L_{c},L_{c+1}) n_{c}|}}.$$
\end{prop}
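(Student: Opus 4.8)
The plan is to deduce the $L^2$-Torres formula from the gluing formula of Proposition~\ref{prop:gluing}. The geometric input is standard: since $L'$ admits $L$ as a sublink, the exterior $M_L$ is obtained from $M_{L'}$ by Dehn-filling the boundary torus $T_{c+1}$ corresponding to $L_{c+1}$ along the meridian $\mu_{c+1}$. Concretely, write $M_L = M_{L'} \cup_{T_{c+1}} (S^1 \times D^2)$, where the solid torus is glued so that $\{pt\} \times \partial D^2$ maps to $\mu_{c+1}$. I would choose finite CW-structures realizing this decomposition, so that in the notation of Proposition~\ref{prop:gluing} one has $X = M_L$, $A = M_{L'}$, $B = S^1 \times D^2$, and $V = A \cap B = T_{c+1}$, a $2$-torus. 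The map $Q\colon \pi_1(M_{L'}) \twoheadrightarrow \pi_1(M_L)$ is then exactly $j_A$ on $\pi_1$-level (it kills the normal closure of $\mu_{c+1}$), and $\phi, \gamma$ on $\pi_1(M_L)$ pull back along $j_A = Q$ to give the admissible triple on $M_{L'}$ with $\phi \circ Q = (n_1,\ldots,n_c,0)\circ\alpha_{L'}$, as claimed in the statement.

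Next I would compute the two "building block" torsions attached to $B$ and $V$. For $B = S^1 \times D^2$, its fundamental group is $\Z = \langle \lambda \rangle$ generated by the core circle, which is isotopic to the longitude $\lambda$ of $L_{c+1}$; its $L^2$-Alexander torsion at the twist $\gamma\circ Q \circ j_B$ is computed from the standard two-cell CW-structure, giving a chain complex essentially $\ell^2(G) \xrightarrow{\partial} \ell^2(G)$ with $\partial = R_{\kappa(\ldots)(\lambda - 1)} = R_{t^{\phi(\lambda)}(\gamma Q)(\lambda) - 1}$. Since $\phi(\lambda) = \sum_i \mathrm{lk}(L_i, L_{c+1}) n_i$ (the $\phi$-value of a longitude is the sum of linking numbers weighted by the $\phi$-coefficients) and $(\gamma Q)(\lambda)$ has infinite order in $G$ by hypothesis, Proposition~\ref{prop:detFK:properties}(5) (the simple case, in its $\det_G$ or induced form, after an induction via Proposition~\ref{prop:detFK:properties}(3)) gives $\det_G(\partial) = \max(1, t^{\phi(\lambda)}) = \max(1,t)^{|\sum_i \mathrm{lk}(L_i,L_{c+1})n_i|}$, and hence $T^{(2)}(B,\ldots)(t) \doteq \max(1,t)^{|\sum_i \mathrm{lk}(L_i,L_{c+1})n_i|}$; in particular $B$-complex is weakly acyclic of determinant class. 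For $V = T_{c+1}$, I would use that a $2$-torus has vanishing $L^2$-Alexander torsion unless... precisely, its chain complex is the tensor square of the circle complex: one of the two $\Z$-directions (say the meridian $\mu_{c+1}$) is killed in $\pi_1(M_L)$, so one must be careful — the relevant twist on $V$ is $\gamma \circ i$, and since $\mu_{c+1}$ may map to a torsion or trivial element, I'd instead argue as in \cite[Theorem 4.4]{BA2} that the torus contributes trivially, i.e.\ $C^{(2)}_*(V,\phi\circ i,\gamma\circ i,t)$ is weakly acyclic of determinant class with $T^{(2)}(V,\ldots)(t) \doteq 1$, using that $T^{(2)}$ of $S^1 \times S^1$ with appropriate coefficients equals $1$ (the two boundary circles' contributions cancel in the product $\prod \det_G(\partial_i)^{(-1)^i}$, when one of the generators acts with infinite order; when not, a direct argument via \cite[Theorem 3.35]{Lu} or the fact that $T^{(2)}$ of a torus is $1$).

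With these in hand, Proposition~\ref{prop:gluing} applied to $X = A \cup B$, $V = A\cap B$ yields the equivalence of weak acyclicity / determinant class between $X$-complex and $A$-complex (since $V$- and $B$-complexes are always weakly acyclic of determinant class here), which is exactly the claimed equivalence of (1) and (2) — noting that $C^{(2)}_*(M_{L'},(n_1,\ldots,n_c,0)\circ\alpha_{L'},\gamma\circ Q)(t)$ is simple-homotopy invariant, so the choice of CW-structure for $M_{L'}$ does not matter, by Proposition~\ref{prop:simple}. Moreover, the quantitative part of Proposition~\ref{prop:gluing} gives
$$T^{(2)}(M_L,\phi,\gamma)(t) \doteq \frac{T^{(2)}(M_{L'},\phi\circ Q,\gamma\circ Q)(t)\cdot T^{(2)}(B,\ldots)(t)}{T^{(2)}(V,\ldots)(t)} \doteq T^{(2)}_{L',(n_1,\ldots,n_c,0)}(\gamma\circ Q)(t)\cdot \max(1,t)^{|\sum_i \mathrm{lk}(L_i,L_{c+1})n_i|},$$
which rearranges to the stated formula after dividing through. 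I expect the main obstacle to be the careful bookkeeping around $V = T_{c+1}$: one must confirm that its $L^2$-Alexander torsion is $\doteq 1$ and that it is weakly acyclic of determinant class \emph{regardless} of whether the images of $\mu_{c+1}$ and $\lambda$ in $G$ have finite or infinite order — the infinite-order hypothesis on $(\gamma Q)(\lambda)$ is precisely what rescues this, via the simple case of the Fuglede--Kadison determinant, but one has to set up the torus chain complex explicitly and verify the cancellation, and also check that the gluing maps between CW-complexes can be arranged to be cellular and induce the claimed $\pi_1$-diagram. The rest is assembling Propositions~\ref{prop:short}, \ref{prop:simple}, \ref{prop:gluing} and Proposition~\ref{prop:detFK:properties}(3),(5).
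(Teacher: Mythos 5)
Your geometric setup is exactly the paper's intended route (the paper's own proof simply says to argue as in \cite[Section 4]{BA2}, replacing the old gluing formula by the upgraded Proposition \ref{prop:gluing}): decompose $M_L = M_{L'} \cup_{T_{c+1}} (S^1\times D^2)$, note that the gluing torus $V=T_{c+1}$ contains $\lambda$, so the infinite-order hypothesis on $(\gamma\circ Q)(\lambda)$ makes $C^{(2)}_*(V,\cdot,\cdot,t)$ weakly acyclic, of determinant class, with torsion $\doteq 1$; compute the solid-torus block, whose core is isotopic to $\lambda$; then Proposition \ref{prop:gluing} gives both the equivalence of (1) and (2) (since the $V$- and $B$-complexes always satisfy the hypotheses) and the multiplicative formula, with Proposition \ref{prop:simple} disposing of the choice of CW-structures. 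All of that is correct and is what the paper does.

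There is, however, a concrete error in your computation of the solid-torus torsion, and it propagates into a final formula that contradicts the statement. With the paper's convention $T^{(2)}(C_*)=\prod_i \det_{\NN(G)}(\partial_i)^{(-1)^i}$, the complex of $B\simeq S^1$ is concentrated in degrees $1$ and $0$, so its torsion is $\det_G(\partial_1)^{-1}$, i.e. $T^{(2)}(B,\ldots)(t)\doteq \max(1,t)^{-\left|\sum_i \mathrm{lk}(L_i,L_{c+1})n_i\right|}$, not the positive power you wrote (you computed $\det_G(\partial_1)$ itself, which is the correct determinant but the inverse of the torsion). Plugging the correct value into $T(X)\doteq T(A)\,T(B)/T(V)$ yields the stated formula on the nose; plugging in your value yields $T_L \doteq T_{L'}\cdot \max(1,t)^{+\left|\sum_i \mathrm{lk}(L_i,L_{c+1})n_i\right|}$, which differs from the claim by $\max(1,t)^{2\left|\sum_i \mathrm{lk}(L_i,L_{c+1})n_i\right|}$ and cannot be repaired by ``rearranging'' or ``dividing through'': $T_L$ and $T_{L'}$ sit on fixed sides of the identity, and $\max(1,t)^{m}\not\doteq\max(1,t)^{-m}$ unless $m=0$. (A cosmetic point: $\phi$ is defined on $\pi_1(M_L)$, so the exponent should be written $(\phi\circ Q)(\lambda)$.) Once the solid-torus torsion is corrected, your argument goes through and coincides with the paper's proof.
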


\begin{proof}
	The proof is similar to \cite[Section 4]{BA2}. Here we use the generalized gluing formula of Proposition \ref{prop:gluing} instead of the weaker version of \cite[Proposition 3.5]{BA2}.
\end{proof}

\section{$L^2$-Burau maps of braids and twisted $L^2$-Alexander torsions of links}\label{sec:twisted}

For each $n \geqslant 1$ and each braid $\beta \in B_n$, let us denote
\begin{itemize}
	\item  $h_\beta$ the (Artin) group automorphism on $\F_{n(\beta)}$,
	\item $\gamma_\beta\colon \F_{n(\beta)} \twoheadrightarrow G_{\beta}$ the quotient by all relations of the form $\star = h_{\beta}(\star)$,
	\item  $\hat{\beta}$ the closure of $\beta$, which is a link in $S^3$,
	\item  $G_{\hat{\beta}}=\pi_1\left (S^3\setminus \hat{\beta}\right )$ the group of the link $\hat{\beta}$,
	\item  $\Psi_\beta\colon  G_\beta \overset{\sim}{\to}  G_{\hat{\beta}}$ the classical isomorphism,
	\item  $\Phi_{n}\colon \F_n \twoheadrightarrow \Z$ the epimorphism which sends the $n$ generators to $1$.
\end{itemize}

We can now state the main result of this paper.

\begin{theo}\label{thm:burau:alexander:twisted}
	Let  $n \geqslant 1$ and $\beta \in B_n$.
	Let $\psi_\beta\colon G_\beta \twoheadrightarrow \Gamma_{\psi_{\beta}}$ denote an epimorphism such that $\Phi_n$ factors through $\psi_\beta \circ  \gamma_\beta$.  Then we have the following relation between functions of $t>0$:
	$$\dfrac{
		\det^r_{\Gamma_{\psi_\beta}}\left (
		\overline{\mathcal{B}}^{(2)}_{t,\psi_\beta \circ  \gamma_\beta}
		(\beta) - \Id^{\oplus (n-1)}
		\right )
	}{\max(1,t)^n} \doteq T^{(2)}_{\hat{\beta},(1,\ldots,1)}\left (\psi_\beta \circ (\Psi_\beta)^{-1}\right )(t).$$ 
	In other words, the $L^2$-Alexander torsion of the link $\hat{\beta}$ twisted by the epimorphism 
	$\psi_\beta \circ (\Psi_\beta)^{-1}$ can be recovered from a specific $L^2$-Burau map of the braid $\beta$.
\end{theo}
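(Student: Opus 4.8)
The plan is to reduce the statement, by the same three-step strategy used for the untwisted case in \cite{BAC}, to an explicit Fox-calculus computation, carrying the twist $\psi_\beta$ along throughout and replacing the classical properties of $L^2$-torsions invoked there by the sharper versions of Section \ref{sec:appendix}. Write $\rho := \psi_\beta\circ(\Psi_\beta)^{-1}\colon G_{\hat\beta}\to\Gamma_{\psi_\beta}$ and $\phi := (1,\ldots,1)\circ\alpha_{\hat\beta}\colon G_{\hat\beta}\to\Z$. The hypothesis that $\Phi_n$ factors through $\psi_\beta\circ\gamma_\beta$ translates, via $\Psi_\beta$, into $\phi$ factoring through $\rho$, so $(M_{\hat\beta},\phi,\rho)$ is an admissible triple; in particular $\rho(\mu)$ has infinite order whenever $\phi(\mu)\neq 0$. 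Let $A$ be the braid axis, an unknot for which $\hat\beta\cup A$ is a $(c+1)$-component link, and let $Q\colon\pi_1(M_{\hat\beta\cup A})\twoheadrightarrow\pi_1(M_{\hat\beta})$ be the epimorphism induced by filling $A$ back in. A preferred longitude $\lambda$ of $A$ satisfies $\phi(Q(\lambda))=\sum_{i=1}^{c}\mathrm{lk}(L_i,A)=n$, hence $(\rho\circ Q)(\lambda)$ has infinite order, and $(M_{\hat\beta\cup A},\phi\circ Q,\rho\circ Q)$ is admissible with $\phi\circ Q=(1,\ldots,1,0)\circ\alpha_{\hat\beta\cup A}$. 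Applying the generalized $L^2$-Torres formula (Proposition \ref{prop:Torres}) with this data gives, with matching weak acyclicity and determinant class,
$$T^{(2)}_{\hat\beta,(1,\ldots,1)}(\rho)(t)\ \doteq\ \frac{T^{(2)}_{\hat\beta\cup A,(1,\ldots,1,0)}(\rho\circ Q)(t)}{\max(1,t)^{n}}.$$

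Next I would compute the numerator from a mapping-torus model. The exterior $M_{\hat\beta\cup A}$ is the mapping torus of a homeomorphism $H_\beta$ of $D_n$ realizing $\beta$, hence is simple homotopy equivalent (the relevant Whitehead groups vanish, as in \cite{BAC}) to the $2$-complex $W_\beta$ with one $0$-cell, $1$-cells $g_1,\dots,g_n,u$ (using that the $g_i$ form a free basis of $\F_n=\pi_1(D_n)$), and $2$-cells $S_1,\dots,S_n$ with $S_i=u\,g_i\,u^{-1}h_\beta(g_i)^{-1}$, so $\pi_1(W_\beta)=\langle g_1,\dots,g_n,u\mid ug_iu^{-1}=h_\beta(g_i)\rangle$. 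Under this identification $u$ is the meridian of $A$, so $(\rho\circ Q)(u)=1$; moreover $(\rho\circ Q)$ restricted to the fibre group $\F_n$ is exactly $\psi_\beta\circ\gamma_\beta$ and $\phi\circ Q$ restricted to $\F_n$ is $\Phi_n$. By Proposition \ref{prop:simple} (with matching acyclicity/determinant class), $T^{(2)}_{\hat\beta\cup A,(1,\ldots,1,0)}(\rho\circ Q)(t)\doteq T^{(2)}(W_\beta,\phi\circ Q,\rho\circ Q)(t)$. Writing $\theta$ for $\kappa(\pi_1(W_\beta),\phi\circ Q,\rho\circ Q,t)$, setting $\bar g_i:=(\psi_\beta\circ\gamma_\beta)(g_i)$ and $a_i:=\theta(g_i)=t^{i}\bar g_i$, the twisted cellular chain complex is $0\to\ell^2(\Gamma_{\psi_\beta})^{n}\overset{R_{\theta(\partial_2)}}{\longrightarrow}\ell^2(\Gamma_{\psi_\beta})^{n+1}\overset{R_{\theta(\partial_1)}}{\longrightarrow}\ell^2(\Gamma_{\psi_\beta})\to 0$, and a Fox-calculus computation — using $\partial S_i/\partial g_j=u\delta_{ij}-\partial h_\beta(g_i)/\partial g_j$, $\partial S_i/\partial u=1-h_\beta(g_i)$, the identities $\gamma_\beta\circ h_\beta=\gamma_\beta$ and $h_\beta(g_n)=g_n$, and the fundamental formula of Fox calculus (Proposition \ref{prop:fox}) — yields $R_{\theta(\partial_1)}=(t R_{\bar g_1}-\Id,\dots,t^{n}R_{\bar g_n}-\Id,0)$ and, in block form with columns split as $(S_1,\dots,S_{n-1}),(S_n)$ and rows split as $(g_1,\dots,g_{n-1}),(g_n),(u)$,
$$R_{\theta(\partial_2)}=\begin{pmatrix}\Id^{\oplus(n-1)}-\overline{\mathcal{B}}^{(2)}_{t,\psi_\beta\circ\gamma_\beta}(\beta)&0\\ \ast&0\\ \ast&\Id-t^{n}R_{\bar g_n}\end{pmatrix},$$
where I am using $\overline{\mathcal{B}}^{(2)}_{t,\psi_\beta\circ\gamma_\beta}(\beta)=R_{\kappa(J')}$, the zero entries in the last column coming from $h_\beta(g_n)=g_n$ and the $u$-row being minus the $g$-part of $R_{\theta(\partial_1)}$ by Proposition \ref{prop:fox}.

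Finally I would extract the torsion. Since $\bar g_n$ has infinite order (because $\phi$ factors through $\rho$ and $\phi(Q(g_n))=n\neq 0$), the operator $R_{a_n-\Id}=t^{n}R_{\bar g_n}-\Id$ is injective of determinant class with $\det_{\Gamma_{\psi_\beta}}(R_{a_n-\Id})=\max(1,t)^{n}$ by Proposition \ref{prop:detFK:properties}(5) and (9). The standard computation of the $L^2$-torsion of a based weakly acyclic complex from its boundary matrices — delete from $R_{\theta(\partial_2)}$ the row corresponding to the $1$-cell $g_n$ and divide by $\det_{\Gamma_{\psi_\beta}}(R_{a_n-\Id})$, with the weak acyclicity/determinant-class statements preserved — then reduces the problem to the square block-triangular operator $\bigl(\begin{smallmatrix}\Id^{\oplus(n-1)}-\overline{\mathcal{B}}^{(2)}_{t,\psi_\beta\circ\gamma_\beta}(\beta)&0\\ \ast&\Id-t^{n}R_{\bar g_n}\end{smallmatrix}\bigr)$, which by Proposition \ref{prop:detFK:properties}(2) has determinant $\doteq\det_{\Gamma_{\psi_\beta}}(\Id^{\oplus(n-1)}-\overline{\mathcal{B}}^{(2)}_{t,\psi_\beta\circ\gamma_\beta}(\beta))\cdot\max(1,t)^{n}$ and is injective of determinant class iff $\Id^{\oplus(n-1)}-\overline{\mathcal{B}}^{(2)}_{t,\psi_\beta\circ\gamma_\beta}(\beta)$ is. Chaining the displayed relations makes the two factors $\max(1,t)^{n}$ cancel, and Proposition \ref{prop:detFK:properties}(9) ($\det_{\Gamma_{\psi_\beta}}(-\Id^{\oplus(n-1)})=1$) together with multiplicativity lets one flip the sign inside and pass to $\det^r$, producing exactly the asserted equivalence. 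The main obstacle is not any single computation but propagating the weak-acyclicity/determinant-class dichotomy through every step — which is precisely why the \emph{equivalences} in Propositions \ref{prop:simple}, \ref{prop:gluing} and \ref{prop:Torres}, rather than the one-way implications available in the literature, are needed — together with the bookkeeping of the identifications $\Psi_\beta$, $Q$ and $\rho\circ Q|_{\F_n}=\psi_\beta\circ\gamma_\beta$ and of the linking numbers $\mathrm{lk}(L_i,A)$; the untwisted skeleton of the argument is already \cite[Theorem 4.9]{BAC}.
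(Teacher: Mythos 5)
Your proposal is correct and takes essentially the same route as the paper: augment $\hat\beta$ by the braid axis, apply the generalized Torres formula (Proposition \ref{prop:Torres}) with the longitude argument, pass by simple homotopy equivalence (Proposition \ref{prop:simple}) to the presentation/mapping-torus $2$-complex of the augmented link group, and identify its Fox-calculus boundary operator with the reduced $L^2$-Burau map before the determinant bookkeeping. The only cosmetic difference is the final evaluation: the paper compares $W'_*$ with the smaller presentation complex $W_*$ of $G_\beta$ through a short exact sequence involving the two-term complex $D_*$ (Proposition \ref{prop:short}), whereas you compute $T^{(2)}(W'_*)$ directly by the standard delete-a-row formula for weakly acyclic $2$-complexes --- a lemma the paper itself uses implicitly when writing down $T^{(2)}(W_*)$.
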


Remark that  the case when $\psi_{\beta}$ is an isomorphism in Theorem \ref{thm:burau:alexander:twisted} was covered by the main result of \cite{BAC}.

\begin{proof}[Proof of Theorem \ref{thm:burau:alexander:twisted}]
	Let $L$ be a link in $S^3$, of exterior $M_L = S^3 \setminus \nu L$. Let $n\geqslant 1$ and $\beta \in B_n$ such that $L = \hat{\beta}$. We will mostly follow the way of the proof of \cite[Theorem 4.9]{BAC}, except for the fact that $\psi_\beta$ is now a general epimorphism and not the identity. This is why we  need the generalized properties of Section \ref{sec:appendix}. We will skip over some details already covered in \cite{BAC}.
	
	Recall that 
	$$P = \langle g_1, \ldots, g_n | r_1:= h_\beta(g_1)g_1^{-1}, \ldots, r_{n-1} := h_\beta(g_{n-1})g_{n-1}^{-1}\rangle$$
	is a presentation of $G_{\beta}$, and also of $G_{L}$, through the isomorphism $\Psi_\beta: G_{\beta} \overset{\sim}{\to} G_L$. Let $W_P$ be the $2$-dimensional CW-complex constructed from $P$. Recall that $W_P$ has a single $0$-cell, one $1$-cell for each generator of $P$, and one $2$-cell for each relator of $P$, each $2$-cell being glued on the wedge of circles that is the $1$-skeleton following the word in the generators formed by the relator in question.
	
	Now denote $L' = L \cup C_{\beta}$, where $C_{\beta}$ is the boundary circle of $D_n$ not coming from one of the punctures, when drawing $L$ as the closure of  $\beta$. Then
	$$P' = \langle g_1, \ldots, g_n,y | r'_1:= h_\beta(g_1)y g_1^{-1}y^{-1}, \ldots, r'_n := h_\beta(g_{n})y g_{n}^{-1}y^{-1}\rangle$$
	is a presentation of $G_{L'}$, with $y$ a meridian of $C_{\beta}$.
	Let $W_{P'}$ be the $2$-dimensional CW-complex constructed from $P'$.
	Since $L'$ is not split, $W_{P'}$ and $M_{L'}$ are therefore $K(G_{L'},1)$ spaces, and since the Whitehead group of $G_{L'}$ is trivial, we have that $W_{P'}$ is simple homotopy equivalent to $M_{L'}$.
	
	To simplify notations, let us denote $G:= \Gamma_{\psi_{\beta}}$, $\psi := \psi_\beta \circ (\Psi_\beta)^{-1}:G_{L} \twoheadrightarrow G$ and $\phi_L:= (1,\ldots,1) \circ \alpha_L\colon G_L \twoheadrightarrow \Z$. Let $t>0$. Let $Q\colon \pi_1(M_{L'}) \twoheadrightarrow \pi_1(M_L)$ denote the group epimorphism induced by removing the component $C_{\beta}$ (a specific instance of Dehn filling on $M_{L'}$). 
	Let us also denote as follows the five finite Hilbert $\NN(G)$-chain complexes we will use in the proof:
	\begin{itemize}
		\item $E_* := C^{(2)}_*(M_L,\phi_L,\psi,t)$,
		\item $E'_* := C^{(2)}_*(M_{L'},\phi_L\circ Q,\psi \circ Q,t)$,
		\item 
		$W_* := C^{(2)}_*(W_P,\phi_L,\psi,t)=\\$
		$$\bigoplus_{j=1}^{n-1} \ell^2(G)\widetilde{r_j}  \overunderset{\partial_2}{\begin{pmatrix} 
	\overline{\mathcal{B}}^{(2)}_{t,\psi}(\beta) -\Id^{n-1} 
				\\ * 	\end{pmatrix}}{\xrightarrow{\hspace{2.5cm}}}
		\bigoplus_{i=1}^{n} \ell^2(G)\widetilde{g_i}  \overunderset{\partial_1}{R_{(t\psi(g_1)-1,\ldots,t^n\psi(g_n)-1)}}{\xrightarrow{\hspace{2.5cm}}}
		\ell^2(G),$$
		\item $W'_* := C^{(2)}_*(W_{P'},\phi_L\circ Q,\psi \circ Q,t)=$\\
		\begin{align*}
		&\bigoplus_{j=1}^{n} \ell^2(G)\widetilde{r'_j}  \overunderset{\partial_2}{\begin{pmatrix}
				\overline{\mathcal{B}}^{(2)}_{t,\psi}(\beta) -\Id^{n-1} & 0
				\\ *  & 0 \\
				tR_{\psi(h_{\beta}(g_1))}-\Id \ \ldots	& t^n R_{\psi(g_n)}-\Id 	\end{pmatrix}}{\xrightarrow{\hspace{5.5cm}}}\\
		&\bigoplus_{i=1}^{n} \ell^2(G)\widetilde{g_i} \ \oplus \ \ell^2(G) \widetilde{y}  \overunderset{\partial_1}{R_{(t\psi(g_1)-1,\ldots,t^n\psi(g_n)-1,0)}}{\xrightarrow{\hspace{2.5cm}}}
		\ell^2(G),
	\end{align*}
		\item $D_* := 0 \to \ell^2(G) \widetilde{r'_n} \overunderset{\partial_2}{\Id - t^n R_{\psi(g_n)}}{\xrightarrow{\hspace{2cm}}} \ell^2(G) \widetilde{y} \overunderset{\partial_1}{0}{\longrightarrow} 0 \to 0$.
	\end{itemize}
	The forms of $W_*$ and $W'_*$ follow from the fact that we can use Fox calculus to describe the boundary operators in cellular chain complexes associated to the universal covers $\widetilde{W_P}$ and $\widetilde{W'_P}$.

	Observe that we have $T^{(2)}(W_*)=\dfrac{
		\det^r_{\Gamma_{\psi_\beta}}\left (
		\overline{\mathcal{B}}^{(2)}_{t,\psi_\beta \circ  \gamma_\beta}
		(\beta) - \Id^{\oplus (n-1)}
		\right )
	}{\max(1,t)^n}$. 
	Moreover, we have $T^{(2)}(E_*) \doteq T^{(2)}_{L,(1,\ldots,1)}(\psi)(t)$ by definition.
	Thus we only need to prove that $T^{(2)}(W_*) \doteq T^{(2)}(E_*)$. Let us state the two following facts.
	
	\underline{Fact 1:}  For $\lambda$  the class of a preferred longitude of $C_{\beta}$ in $G_{L'}$,  $\psi(Q(\lambda))$ has infinite order in $G$.
	This  is a consequence of  $\phi_L$ factoring through $\psi$ and $\phi(Q(\lambda))=n \neq 0$.

	\underline{Fact 2:}  There exists 
	$$0\to W_* \overset{\eta_*}{\longrightarrow} W'_* \overset{\rho_*}{\longrightarrow} D_* \to 0,$$ 
	a short exact sequence of finite Hilbert $\NN(G)$-chain complexes, with $\eta_2,\eta_1$ the obvious inclusions, $\eta_0=0$, $\rho_2,\rho_1$ the obvious projections, and $\rho_0=\Id$.
	This Fact follows from comparing the cells of $W_P$
	and $W_{P'}$, and from the definitions of $L^2$-Burau maps with Fox calculus.

	We can now establish the following equalities of $L^2$-torsions: 
	$$ T^{(2)}(E_*)  \ \dot{=} \ \dfrac{T^{(2)}(E'_*)}{\max(1,t)^n}  \ \dot{=} \ \dfrac{T^{(2)}(W'_*)}{\max(1,t)^n}  \ \dot{=} \ T^{(2)}(W_*),$$
	where the first equality follows from Proposition \ref{prop:Torres} and Fact 1, the second one follows from Proposition \ref{prop:simple}, and the third one follows from Proposition \ref{prop:short}, Fact 2 and Proposition \ref{prop:detFK:properties} (5) and (8).
	
	Observe that if any one of  $E_*, E'_*, W'_*, W_*$ is not weakly acyclic (resp. is weakly acyclic but not of determinant class), then no one is (resp. they all are), and in this case their $L^2$-torsions are all $0$  (and the previous equalities still stand).	
	
	This conludes the proof.	
\end{proof}

\begin{remark}
	In the previous proof, the exact sequence in Fact 2 is reversed from the one in the proof of \cite[Theorem 4.9]{BAC},  the latter being incorrect. Our proof of Theorem \ref{thm:burau:alexander:twisted}, which generalizes the one of \cite[Theorem 4.9]{BAC}, can thus be considered as an erratum of this mistake.
\end{remark}

\begin{remark}
	The proof of Theorem \ref{thm:burau:alexander:twisted} can be shortened if $L$ is non split, directly by using the simple homotopy equivalence between $M_L$ and $W_P$, like in the proof of  \cite[Theorem 4.9]{BAC}.
\end{remark}

As a consequence of Theorem \ref{thm:burau:alexander:twisted}, for a given braid $\beta \in B_n$, we conclude that the $L^2$-Burau maps of $\beta$ at the level of the group $G_\beta$ detects various topological information (possibly up to going to a lower level by applying an epimorphism $\psi_{\beta}$):

\begin{corollary}\label{cor:topo:info}
The reduced $L^2$-Burau maps $\left (t>0 \mapsto \overline{\mathcal{B}}^{(2)}_{t,\gamma_\beta}(\beta)\right )$ of a braid $\beta \in B_n$ detect the following topological information:
\begin{enumerate}
\item the simplicial volumes of the link exterior $M_{\hat{\beta}}$ and all its Dehn fillings,
\item the genus of the link $\hat{\beta}$ and the  Thurston norms  of $0$-surgeries on the link exterior $M_{\hat{\beta}}$,
\item the splitness of the link $\hat{\beta}$ and the irreducibility of all  Dehn fillings of the link exterior $M_{\hat{\beta}}$.
\end{enumerate}
\end{corollary}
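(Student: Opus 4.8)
The plan is to show that each piece of topological information listed is already known to be extractable from a twisted $L^2$-Alexander torsion of $\hat\beta$, and then invoke Theorem \ref{thm:burau:alexander:twisted} to express that torsion as a function of the reduced $L^2$-Burau maps of $\beta$. The key observation is that Theorem \ref{thm:burau:alexander:twisted} applies to \emph{every} epimorphism $\psi_\beta\colon G_\beta\twoheadrightarrow \Gamma_{\psi_\beta}$ through which $\Phi_n$ factors (after composing with $\gamma_\beta$), and that each such $\psi_\beta$ is obtained from $\overline{\mathcal{B}}^{(2)}_{t,\gamma_\beta}(\beta)$ purely formally by post-composing the coefficients with the ring map induced by $\psi_\beta$; hence the whole family $\{\psi_\beta\circ(\Psi_\beta)^{-1}\}$ of twists, and in particular the untwisted case $\psi_\beta=\id$, is determined by $\left(t\mapsto\overline{\mathcal{B}}^{(2)}_{t,\gamma_\beta}(\beta)\right)$.

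First I would handle (1). For $\psi_\beta=\id_{G_\beta}$ we recover, up to $\doteq$, the untwisted $L^2$-Alexander torsion $T^{(2)}_{\hat\beta,(1,\ldots,1)}(\id)(t)$ of the link exterior $M_{\hat\beta}$; evaluating at $t=1$ gives (up to the $\max(1,t)^n$ normalization, which is $1$ at $t=1$) the full $L^2$-torsion $T^{(2)}(M_{\hat\beta})$, and Theorem \ref{thm:LS} of Lück--Schick converts this into $\mathrm{vol}(M_{\hat\beta})$. For the Dehn fillings one uses the twisted versions: a Dehn filling of $M_{\hat\beta}$ along a slope is captured by an epimorphism $\psi_\beta$ onto the fundamental group of the filled manifold (the admissibility condition that $\Phi_n$ factors through $\psi_\beta\circ\gamma_\beta$ must be checked, which is where a small argument is needed), and then Theorem \ref{thm:burau:alexander:twisted} together with Theorem \ref{thm:LS} applied to the filled manifold yields its simplicial volume. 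Step (3) is analogous and lighter: Proposition \ref{prop:irred:zero} says reducibility (splitness for a link) is equivalent to the vanishing of the relevant $L^2$-torsion, so the splitness of $\hat\beta$ is read off from whether $T^{(2)}(W_*)\doteq 0$ for $\psi_\beta=\id$, and the irreducibility of the Dehn fillings is read off from the corresponding twisted torsions being nonzero.

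For (2), the genus of $\hat\beta$ and the Thurston norms of $0$-surgeries: here I would invoke Theorem \ref{thm:liu} of Liu, which extracts the Thurston norm $x_M(\phi)$ as a difference of asymptotic degrees of $t\mapsto T^{(2)}(M,\phi)(t)$. Applied to $M=M_{\hat\beta}$ with $\phi=\phi_L=(1,\ldots,1)\circ\alpha_L$ this gives the Thurston norm of that class, which for a knot is $2g-1$ and more generally controls the genus via the standard relation between Thurston norm and Seifert genus; for the $0$-surgeries one again passes to the appropriate twist/quotient and applies Theorem \ref{thm:liu} to the surgered manifold. The main obstacle in all three parts is the bookkeeping for the Dehn fillings and surgeries: one must verify that each such filling corresponds to a legitimate epimorphism $\psi_\beta$ satisfying the admissibility hypothesis of Theorem \ref{thm:burau:alexander:twisted} (i.e.\ that $\Phi_n$ still factors through $\psi_\beta\circ\gamma_\beta$), and that the irreducibility hypotheses of Theorems \ref{thm:LS} and \ref{thm:liu} hold for the filled/surgered manifold (or else that the torsion is $0$ and Proposition \ref{prop:irred:zero} handles that case). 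Once this is set up, each item is a one-line application of the cited theorem to the function of $t$ produced by Theorem \ref{thm:burau:alexander:twisted}.
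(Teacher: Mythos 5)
Your overall strategy --- feed Theorem \ref{thm:burau:alexander:twisted} with the various epimorphisms $\psi_\beta$ obtained by pushing down the coefficients of $\overline{\mathcal{B}}^{(2)}_{t,\gamma_\beta}(\beta)$, then apply Theorem \ref{thm:LS}, Theorem \ref{thm:liu} and Proposition \ref{prop:irred:zero} --- is the same as the paper's. However, there is a genuine gap in your treatment of general Dehn fillings in items (1) and (3). You write that the admissibility condition (that $(1,\ldots,1)\circ\alpha_L$ factors through the filling epimorphism $Q\colon G_{\hat{\beta}}\twoheadrightarrow\pi_1(N)$) ``must be checked, which is where a small argument is needed''. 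In fact this condition \emph{fails} for every filling that is not a $0$-surgery: a filling curve of nonzero slope is not killed by $(1,\ldots,1)\circ\alpha_L$, so no $\phi_N$ with $(1,\ldots,1)\circ\alpha_L=\phi_N\circ Q$ exists, and Theorem \ref{thm:burau:alexander:twisted}, as a statement about functions of $t$, cannot be invoked for such twists. This is precisely why the corollary claims Thurston norms only for $0$-surgeries in (2), while for arbitrary fillings it claims only simplicial volume and irreducibility. The paper's resolution, missing from your plan, is to specialize to $t=1$: at $t=1$ the homomorphism $\phi$ plays no role in the twisted chain complex, so no factorization is needed, and one recovers the plain $L^2$-torsion twisted by $Q$; this suffices for Theorem \ref{thm:LS} and Proposition \ref{prop:irred:zero}, giving (1) and (3) for all fillings.

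A second, smaller gap: Theorem \ref{thm:burau:alexander:twisted} outputs twisted $L^2$-Alexander torsions of the link \emph{exterior} $M_{\hat{\beta}}$, not torsions of the filled manifold $N$ itself. To convert the torsion of $M_{\hat{\beta}}$ twisted by $Q$ into $T^{(2)}(N,\phi_N)(t)$ (or $T^{(2)}(N)$ at $t=1$) one must account for the filling solid tori, i.e.\ use the surgery formula of \cite[Proposition 4.2]{BA2} (which generalizes Proposition \ref{prop:Torres}); the paper cites it for exactly this purpose. Without that step, applying Theorems \ref{thm:LS} and \ref{thm:liu} ``to the filled manifold'' is not justified. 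Your treatment of $M_{\hat{\beta}}$ itself (genus via Theorem \ref{thm:liu}, volume via Theorem \ref{thm:LS}, splitness via Proposition \ref{prop:irred:zero}) matches the paper and is fine.
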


\begin{proof}
From Theorem \ref{thm:burau:alexander:twisted} and the general surgery formula of \cite[Proposition 4.2]{BA2} (which generalizes Proposition \ref{prop:Torres}), we conclude that the reduced $L^2$-Burau maps $\left (t>0 \mapsto \overline{\mathcal{B}}^{(2)}_{t,\gamma_\beta}(\beta)\right )$ of a given braid $\beta \in B_n$ can recover the $L^2$-Alexander torsion of the link exterior $M_{\hat{\beta}}$ (with coefficients $(1,\ldots,1)$) and certain $L^2$-Alexander torsions of Dehn fillings $N$ of $M_{\hat{\beta}}$. 

More precisely, let $L= \hat{\beta}$ and let $N$ be obtained by Dehn fillings on $M_L$, with the induced epimorphism $Q: G_{L} \twoheadrightarrow \pi_1(N)$. Then two cases can occur:
\begin{enumerate}
\item[(a)] Either  $(1,\ldots,1)\circ \alpha_L$ factors through $Q$, which happens exactly when $N$ is obtained by one or more \textit{$0$-surgeries} (i.e. Dehn fillings with slope zero),
\item[(b)] Or  $(1,\ldots,1)\circ \alpha_L$ does not factor through $Q$, which happens in most cases.
\end{enumerate}

In case (a), we can still define a $L^2$-Alexander torsion $T^{(2)}(N,\phi_N)(t)$ (with $\phi_N$ such that $(1,\ldots,1)\circ \alpha_L=\phi_N \circ Q$) for any $t>0$, and express it from the $L^2$-Burau map
$\overline{\mathcal{B}}^{(2)}_{t,Q \circ \gamma_\beta}(\beta)$ via Theorem \ref{thm:burau:alexander:twisted}. Thus, by Theorem \ref{thm:liu}, the Thurston norms of $(M_L,(1,\ldots,1)\circ \alpha_L)$ (i.e. the genus of $L$) and of the $(N,\phi_N)$ can also be recovered, which proves (2).

In cases (a) and (b), we can still define the $L^2$-Burau maps and the $L^2$-Alexander torsions for the specific case of $t=1$. The $L^2$-Alexander torsion of $M_L$ (resp. $N$) is then simply its \textit{$L^2$-torsion}, which is known to recover the simplicial volume by Theorem \ref{thm:LS}. This proves (1).

Again in cases (a) and (b), Proposition \ref{prop:irred:zero} (which is an immediate generalization of \cite[Lemma 3.5]{BAC}) establishes that a $3$-manifold is reducible if and only if its $L^2$-torsion is zero. Since $M_L$ is reducible if and only if $L$ is split, this proves (3).
\end{proof}

The classical Burau representation of $B_n$ is not faithful for $n \geqslant 5$. On the other hand, the 
highest $L^2$-Burau map (i.e. the Fox jacobian of the Artin action on the free group $\F_n$) is known to be injective.
A natural question is thus: 

\begin{question}\label{qu:lowest:map}
For a given braid $\beta \in B_n$, what are the lowest $L^2$-Burau maps that distinguish $\beta$ from the trivial braid?
\end{question}

Corollary \ref{cor:topo:info} (see also \cite[Corollary 4.11]{BAC}) suggests that the $L^2$-Burau maps at the level of the group of the braid closure are promising candidates for Question \ref{qu:lowest:map}, as they detect various kinds of strong topological information.

\section{How to find other link invariants}\label{sec:other}

In this section, we discuss how likely we are to generalize the process of Theorem \ref{thm:burau:alexander:twisted} in order to construct more invariants of links from $L^2$-Burau maps of braids.

 In Section \ref{sec:markov:adm} we introduce the notion of Markov-admissibility of a family of epimorphisms, a necessary property for hoping to construct link invariants; in Section \ref{sec:invariance} we study Markov moves on $L^2$-Burau maps; in Section \ref{sec:counter:ex} we find two examples of families that cannot yields link invariants. This suggests that, regarding constructions of link invariants from $L^2$-Burau maps, Theorem \ref{thm:burau:alexander:twisted} is the best we can hope for with currently known formulas.

However, different formulas with $L^2$-Burau maps might satisfy Markov invariance and provide new link invariants. We hope that the following detailed descriptions of the actions of Markov moves can help for such future quests for new link invariants.

\subsection{Markov admissibility of a family of epimorphisms}\label{sec:markov:adm}

In this section we introduce the notion of Markov-admissibility for a family of epimorphisms indexed by braids, and we discuss several  examples of such families.

For each $n \geqslant 1$ and each braid $\beta \in B_n$, let us recall and fix some notation:
\begin{itemize}
\item  $n(\beta):=n$ the number of strands,
\item  $h_\beta$ the (Artin) group automorphism on $\F_{n(\beta)}$ (recall that $\beta \mapsto h_{\beta}$ is anti-multiplicative),
\item $\gamma_\beta\colon \F_{n(\beta)} \twoheadrightarrow G_{\beta}$ the quotient by all relations of the form $\star = h_{\beta}(\star)$,
\item  $\hat{\beta}$ the closure of $\beta$, a link in $S^3$,
\item  $G_{\hat{\beta}}=\pi_1\left (S^3\setminus \hat{\beta}\right )$ the group of the link $\hat{\beta}$,
\item  $\Phi_{n}\colon \F_n \twoheadrightarrow \Z$ the epimorphism which sends the $n$ generators to $1$,
\item $\iota_n \colon \F_n \hookrightarrow \F_{n+1}$ the group inclusion sending the $n$ generators of $\F_n$ to the first $n$ generators of $\F_{n+1}$.
\end{itemize}

\begin{defi}
A family $\mathcal{Q}$ of group epimorphisms of the form
$$\mathcal{Q} =\left \{ Q_{\beta}\colon \F_{n(\beta)}\twoheadrightarrow G_{Q_{\beta}}  \ | \ \beta \in \sqcup_{n\geqslant 1} B_n \right \}
$$
is called \textit{Markov-admissible} if it satisfies the following conditions:
\begin{enumerate}
\item For all $n\geqslant 1$ and all $\alpha, \beta \in B_n$, there exists a group homomorphism \\
$\chi^\mathcal{Q}_{\beta,\alpha}\colon G_{Q_{\beta}} \to G_{Q_{\alpha^{-1}\beta \alpha}}$
such that $Q_{\alpha^{-1}\beta \alpha} \circ h_\alpha = \chi^\mathcal{Q}_{\beta,\alpha} \circ Q_\beta$. Observe that this  implies that each $\chi^\mathcal{Q}_{\beta,\alpha}$ is uniquely defined and is an isomorphism.
\item For all $n\geqslant 1$, $\beta \in  B_n$ and $\varepsilon \in \{\pm 1\}$, there exists a group monomorphism
$\sigma^\mathcal{Q}_{\beta,\varepsilon}\colon G_{Q_{\beta}}
\hookrightarrow
G_{Q_{\sigma_n^{\varepsilon}\beta }}$ such that 
$Q_{\sigma_n^{\varepsilon}\beta} \circ \iota_n = \sigma^\mathcal{Q}_{\beta,\varepsilon} \circ Q_\beta$. Observe that each $Q_{\sigma_n^{\varepsilon}\beta}$ is uniquely defined.
\end{enumerate}
These two conditions are illustrated in Figure \ref{fig:markov:adm}.
\end{defi}

\begin{figure}[!h]

\begin{tikzpicture}

\begin{scope}[xshift=0cm]

\node at (0, 0)    {$G_{Q_{\beta}}$};

\draw[->,color=blue] (0.5,0)--(2.1,0);
\node[color=blue] at (1.3,0.3)    {$\chi^\QQ_{\beta,\alpha}$};
\node[color=blue] at (1.3,-0.2)    {$\sim$};

\node at (3, 0)    {$G_{Q_{\alpha^{-1}\beta\alpha }}$};

\node at (0,2)    {$\F_n$};

\draw[->>] (0,1.7)--(0,0.4);
\node at (0.3,1)    {$Q_\beta$};

\draw[->] (0.3,2)--(2.6,2);
\node at (1.3,2.3)    {$h_{\alpha}$};
\node at (1.3,2-0.2)    {$\sim$};

\node at (3,2)    {$\F_n$};

\draw[->>] (3,1.7)--(3,0.4);
\node at (3.7,1)    {$Q_{\alpha^{-1}\beta \alpha}$};

\end{scope}


\begin{scope}[xshift=7cm]

\node at (0, 0)    {$G_{Q_{\beta}}$};

\node[color=blue] at (0.5, -0.01)    {$\lhook\joinrel$};
\draw[->,color=blue] (0.55,0)--(2.3,0);
\node[color=blue] at (1.3,0.3)    {$\sigma^\QQ_{\beta,\varepsilon}$};

\node at (3, 0)    {$G_{Q_{\sigma_n^{\varepsilon}\beta }}$};

\node at (0,2)    {$\F_{n}$};

\draw[->>] (0,1.7)--(0,0.4);
\node at (0.3,1)    {$Q_\beta$};

\draw[->] (0.45,2)--(2.4,2);
\node at (0.4, 2-0.01)    {$\lhook\joinrel$};
\node at (1.3,2.3)    {$\iota_n$};

\node at (3,2)    {$\F_{n+1}$};

\draw[->>] (3,1.7)--(3,0.4);
\node at (3.5,1)    {$Q_{\sigma_n^{\varepsilon}\beta }$};

\end{scope}

\end{tikzpicture}

\caption{Conditions for $\mathcal{Q}$ to be Markov-admissible}
\label{fig:markov:adm}
\end{figure}

Roughly speaking, the epimorphisms in a Markov-admissible family will be compatible in a way that lets us hope to compute ($L^2$-)knot invariants by using them. Note that less restrictive definitions may be preferred in the future if we find better ways of computing $L^2$-objects such as Fuglede--Kadison determinants.

We now present examples of Markov-admissible families, which are all displayed in Figure \ref{fig:epimorphisms} for clarity. Moreover, Figure \ref{fig:epimorphisms} summarizes the results of Sections \ref{sec:invariance} and \ref{sec:counter:ex} concerning Markov invariance ({\color{green} Y} standing for yes and {\color{red} N} for no).

\begin{figure}[!h]
	
	\begin{tikzpicture}
	
	\node at (0, 4)    {$\F_n$};
	
	\node at (0.3, 4.2)    {\color{red} N};
	
	\draw[->>] (0,3.7)--(0,0.3);
	\node at (0.3, 2)    {$\Phi_n$};
	
	\node at (0, 0)    {$\Z$};
	
	\node at (0.5, -0.1)    {\color{green} Y};
	
	\draw[->>] (0.3,3.7)--(1.7,2.3);
	\node at (1.2, 3.2)    {$\gamma_\beta$};
	
	\draw[->>,dashed] (1.7,1.7)--(0.3,0.3);
	
	\node at (2,2)    {$G_{\beta}$};
	
	\node at (2.4,2.4)    {\color{green} Y};
	
	\draw[->] (2.3,2)--(4.7,2);
	\node at (3.5,2.2)    {$\Psi_\beta$};
	\node at (3.5,2-0.2)    {$\sim$};
	
	\node at (5,2)    {$G_{\hat{\beta}}$};
	
	\node at (5.5, 2)    {\color{green} Y};
	
	\draw[->>] (2.3,1.9)--(3.6,1.3);
	\node at (2.8, 1.4)    {$\psi_\beta$};
	
	\draw[->>,dashed] (3.6,0.8)--(0.3,0.1);
	
	\node at (4,1)    {$\Gamma_{\psi_\beta}$};
	
	\node at (4.5, 1)    {\color{green} Y};
	
	\draw[->>] (-0.2, 3.8)--(-1.8,2.2);
	\node at (-1.2, 3.2)    {$\varphi_n$};
	
	\draw[->>,dashed] (-1.8,1.7)--(-0.3,0.3);
	
	\node at (-2,2)    {$\Z^n$};
	
	\node at (-2.5, 2)    {\color{red} N};
	
	\end{tikzpicture}
	\label{fig:epimorphisms}
	\caption{When does $\overline{\mathcal{B}}^{(2)}_{t,\gamma}$ yield a map on braids which is invariant under Markov moves, for 
		$\gamma\colon\F_n \twoheadrightarrow G$ ?}
\end{figure}

\begin{exemple}
The family of identity morphisms $\mathcal{Q} =\left \{ \id_{\F_{n(\beta)}} \right \}$ is Markov-admissible, with $\chi^\QQ_{\beta,\alpha}=\id_{\F_{n(\beta)}}$ and $\sigma^\QQ_{\beta,\varepsilon} = \iota_{n(\beta)}$.
\end{exemple}

\begin{exemple}
The family $\mathcal{Q} =\left \{ \Phi_{n(\beta)} \right \}$ is Markov-admissible, with ${\chi^\QQ_{\beta,\alpha}=\sigma^\QQ_{\beta,\varepsilon} = \id_\Z}$.
\end{exemple}

\begin{exemple}\label{ex:ab}
The family of abelianizations $\mathcal{Q} =\left \{ \varphi_{n(\beta)}\colon \F_{n(\beta)} \twoheadrightarrow \Z^{n(\beta)} \right \}$ is Markov-admissible, with
$\sigma^\QQ_{\beta,\varepsilon}$ the inclusion $\Z^{n(\beta)} \hookrightarrow \Z^{n(\beta)+1}$ induced by $\iota_{n(\beta)}$, and
 $\chi^\QQ_{\beta,\alpha}$ the permutation on the canonical generators of $\Z^n$ corresponding to the permutation of $n(\alpha)$ strands induced by $\alpha \in B_n$. 
\end{exemple}

The following proposition is an elementary result in group theory, but is stated for the reader's convenience.

\begin{prop}\label{prop:quotient}
Let $f\colon G \to H$ be a group homomorphism, and $N$ a normal subgroup of $G$. If 
$f$ is surjective and $Ker(f) \subset N$ (in particular if $f$ is an isomorphism),
then $f$ induces an isomorphism between $G/N$ and $H/f(N)$.
\end{prop}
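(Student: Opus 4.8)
The statement to prove is Proposition \ref{prop:quotient}: if $f\colon G \to H$ is a surjective group homomorphism with $\ker(f) \subseteq N$ for $N \trianglelefteq G$, then $f$ induces an isomorphism $G/N \cong H/f(N)$.

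The plan is to build the map directly using the universal property of quotients and then check it is a well-defined isomorphism. First I would note that $f(N)$ is a normal subgroup of $H$: since $f$ is surjective, any $h \in H$ is $f(g)$ for some $g \in G$, and $h f(N) h^{-1} = f(gNg^{-1}) = f(N)$ because $N$ is normal in $G$; hence the quotient $H/f(N)$ makes sense. Next, consider the composite $G \xrightarrow{f} H \xrightarrow{\pi} H/f(N)$, where $\pi$ is the canonical projection; this is a surjective homomorphism (composite of surjections). Its kernel consists of all $g \in G$ with $f(g) \in f(N)$, i.e. $f(g) = f(n)$ for some $n \in N$, equivalently $gn^{-1} \in \ker(f) \subseteq N$, equivalently $g \in N$. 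So $\ker(\pi \circ f) = N$. By the first isomorphism theorem, $\pi \circ f$ descends to an isomorphism $\overline{f}\colon G/N \xrightarrow{\sim} H/f(N)$, given by $gN \mapsto f(g)f(N)$.

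The only subtle point is the computation $\ker(\pi \circ f) = N$: one direction, $N \subseteq \ker(\pi\circ f)$, is immediate from $f(N) \subseteq f(N)$; the reverse inclusion is exactly where the hypothesis $\ker(f) \subseteq N$ is used, via the argument $gn^{-1} \in \ker(f) \subseteq N \Rightarrow g \in N$. I do not expect any real obstacle here — this is a standard diagram chase — but I would take care to state explicitly that the induced map is $gN \mapsto f(g)f(N)$, since it is this explicit formula (compatibility of $f$ with the quotient projections) that is actually invoked later when applying the proposition to the morphisms $\chi^\QQ_{\beta,\alpha}$ and $\Psi_\beta$ in the context of Markov-admissible families.
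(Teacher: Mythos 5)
Your proof is correct; the paper itself gives no argument for this proposition (it is stated as an elementary group-theoretic fact), and your route --- checking that $f(N)$ is normal in $H$ via surjectivity, computing $\ker(\pi\circ f)=N$ using $\ker(f)\subseteq N$, and applying the first isomorphism theorem to get the induced map $gN\mapsto f(g)f(N)$ --- is exactly the standard argument the paper implicitly relies on, including the explicit form of the induced isomorphism used later for the $\chi^{\mathcal{Q}}_{\beta,\alpha}$.
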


Let us now consider epimorphisms that descend to the fundamental groups of the braid closure complements.

\begin{prop}\label{prop:gamma}
The family $\QQ=\{ \gamma_\beta\colon \F_{n(\beta)}\twoheadrightarrow G_{\beta} \}$ is Markov-admissible. Moreover the monomorphisms $\sigma^\mathcal{Q}_{\beta,\varepsilon}$ are isomorphisms.
\end{prop}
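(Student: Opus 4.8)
The plan is to verify the two conditions of Markov-admissibility directly from the definition of $\gamma_\beta$ as the quotient of $\F_{n(\beta)}$ by the relations $\star = h_\beta(\star)$, and to keep track of the induced maps using Proposition \ref{prop:quotient}. Throughout, write $N_\beta := \langle\langle \{ w^{-1} h_\beta(w) : w \in \F_{n(\beta)}\}\rangle\rangle$ for the normal closure, so that $G_\beta = \F_{n(\beta)}/N_\beta$ and $\gamma_\beta$ is the canonical projection. It is convenient to note first that since $h_\beta$ is a group automorphism, $N_\beta$ is also the normal closure of $\{ h_\beta(w)^{-1} w \}$ and, more usefully, $w N_\beta = h_\beta(w) N_\beta$ for every $w \in \F_{n(\beta)}$; equivalently $\gamma_\beta \circ h_\beta = \gamma_\beta$. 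This last identity is the workhorse for condition (1).

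For condition (1), fix $\alpha,\beta \in B_n$. I want a homomorphism $\chi^\QQ_{\beta,\alpha} \colon G_{Q_\beta} \to G_{Q_{\alpha^{-1}\beta\alpha}}$ with $Q_{\alpha^{-1}\beta\alpha}\circ h_\alpha = \chi^\QQ_{\beta,\alpha}\circ Q_\beta$. By Proposition \ref{prop:quotient} applied to the isomorphism $h_\alpha\colon \F_n \to \F_n$ and the normal subgroup $N_\beta$, the map $h_\alpha$ induces an isomorphism $\F_n/N_\beta \to \F_n / h_\alpha(N_\beta)$. So the key computation is $h_\alpha(N_\beta) = N_{\alpha^{-1}\beta\alpha}$. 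Using anti-multiplicativity of $\delta\mapsto h_\delta$ (so $h_{\alpha^{-1}\beta\alpha} = h_\alpha \circ h_\beta \circ h_{\alpha^{-1}} = h_\alpha \circ h_\beta \circ h_\alpha^{-1}$), one checks that $h_\alpha$ sends the generating set $\{ w^{-1}h_\beta(w)\}$ of $N_\beta$ to $\{ h_\alpha(w)^{-1}\, h_\alpha(h_\beta(w))\} = \{ v^{-1}\, h_{\alpha^{-1}\beta\alpha}(v) \}$ (substituting $v = h_\alpha(w)$, which ranges over all of $\F_n$), and since $h_\alpha$ is an automorphism it sends normal closures to normal closures; hence $h_\alpha(N_\beta) = N_{\alpha^{-1}\beta\alpha}$. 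Therefore $h_\alpha$ induces an isomorphism $\chi^\QQ_{\beta,\alpha}\colon G_\beta \to G_{\alpha^{-1}\beta\alpha}$, and the commutation $Q_{\alpha^{-1}\beta\alpha}\circ h_\alpha = \chi^\QQ_{\beta,\alpha}\circ Q_\beta$ holds by construction. Uniqueness and the isomorphism property are automatic since $Q_\beta$ is surjective (as the excerpt already observes).

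For condition (2), fix $\beta \in B_n$ and $\varepsilon \in \{\pm1\}$; I must produce a monomorphism $\sigma^\QQ_{\beta,\varepsilon}\colon G_\beta \hookrightarrow G_{\sigma_n^\varepsilon\beta}$ with $Q_{\sigma_n^\varepsilon\beta}\circ\iota_n = \sigma^\QQ_{\beta,\varepsilon}\circ Q_\beta$, and show it is in fact an isomorphism. Here $\iota_n\colon\F_n\hookrightarrow\F_{n+1}$ and $\sigma_n^\varepsilon\beta \in B_{n+1}$ (one thinks of $\sigma_n^{\pm1}$ as the positive/negative stabilization performing a Markov move). The strategy: first check that $\iota_n$ descends to a well-defined homomorphism $G_\beta \to G_{\sigma_n^\varepsilon\beta}$, i.e. that $\iota_n(N_\beta) \subseteq N_{\sigma_n^\varepsilon\beta}$; this follows because the Artin action of $\sigma_n^{\pm1}$ on the first $n-1$ generators agrees with the trivial extension, so $h_{\sigma_n^\varepsilon\beta}\circ\iota_n$ and $\iota_n\circ h_\beta$ agree on $\F_n$ in $\F_{n+1}$ modulo the extra relation, and the relations defining $N_\beta$ push forward to relations in $N_{\sigma_n^\varepsilon\beta}$. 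Then, to see the induced map is bijective, exhibit the inverse: in $G_{\sigma_n^\varepsilon\beta}$ the relation coming from $r'_n$ (in the notation $h_{\sigma_n^\varepsilon\beta}(g_n) = g_n$-type relations, which in the $x$-generators expresses $x_{n+1}$ as a word in $x_1,\dots,x_n$, compare the presentations $P$ and $P'$ in the proof of Theorem \ref{thm:burau:alexander:twisted}) shows that $x_{n+1}$ is redundant, so the inclusion $\F_n\hookrightarrow\F_{n+1}$ induces a surjection on the quotients; combined with injectivity (which follows from a retraction $\F_{n+1}\to\F_n$ killing $x_{n+1}$, compatible with the relations), $\sigma^\QQ_{\beta,\varepsilon}$ is an isomorphism.

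The main obstacle I anticipate is the bookkeeping in condition (2): one must be careful about which set of generators ($x_i$ versus $g_i$) one uses, about the exact form of the extra relation introduced by the stabilizing letter $\sigma_n^{\pm1}$, and about the fact that $h_{\sigma_n^\varepsilon\beta}$ does not literally extend $h_\beta$ but only agrees with it after passing to the quotient. Making precise that the extra generator is genuinely eliminated by the extra relation — so that the natural map $G_\beta \to G_{\sigma_n^\varepsilon\beta}$ is an isomorphism rather than merely an injection — is the delicate point; this is essentially the algebraic shadow of the topological fact that a stabilization does not change the link type of the closure, and I would phrase it via the Tietze transformation eliminating $(x_{n+1}, r'_n)$ from the presentation $P'$ to recover $P$. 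Conditions on $\chi$ and uniqueness, by contrast, are formal consequences of surjectivity and Proposition \ref{prop:quotient}.
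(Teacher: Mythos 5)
Your proposal is correct and follows essentially the same route as the paper: condition (1) via Proposition \ref{prop:quotient} and the identity $h_\alpha(\ker\gamma_\beta)=\ker\gamma_{\alpha^{-1}\beta\alpha}$ (using anti-multiplicativity of $\beta\mapsto h_\beta$), and condition (2) by identifying $\ker\gamma_{\sigma_n^{\varepsilon}\iota(\beta)}$ as the normal closure of $\iota_n(\ker\gamma_\beta)$ together with one extra relator expressing $x_{n+1}$ as a word in $x_1,\dots,x_n$, then eliminating that generator (the paper proves injectivity by a cancellation argument and records your Tietze-transformation phrasing in a remark). Just be sure the retraction you invoke for injectivity does not literally kill $x_{n+1}$ but substitutes for it the word given by the extra relator, namely $x_{n+1}\mapsto x_n$ for the negative stabilization and $x_{n+1}\mapsto h_{\iota(\beta)}(x_n)$ for the positive one, so that it is compatible with all defining relations and restricts to the identity on $\iota_n(\F_n)$.
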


\begin{proof}
\underline{First step: Markov 1:}\\
Take $n \geqslant 1$ and $\alpha,\beta \in B_n$. Note that
\begin{align*}
 h_\alpha (Ker(\gamma_\beta)) 
&= h_{\alpha} \left (\ \langle \langle h_\beta(x)x^{-1}; x \in \F_n \rangle \rangle \ \right )
=  \langle \langle h_{\beta \alpha}(x)h_{\alpha}(x)^{-1}; x \in \F_n \rangle \rangle \\
&=  \langle \langle h_{\alpha^{-1} \beta \alpha}(y) y^{-1}; y \in \F_n \rangle \rangle
= Ker(\gamma_{\alpha^{-1} \beta \alpha}),
\end{align*}
thus it follows from Proposition \ref{prop:quotient} that
the isomorphism $h_\alpha\colon \F_n \overset{\sim}{\to} \F_n$ induces the required isomorphism $\chi^\QQ_{\beta,\alpha}\colon G_{\beta} \overset{\sim}{\to} G_{\alpha^{-1} \beta \alpha}$.

\

\underline{Second step: Markov 2:}\\
Take $n \geqslant 1$ and $\beta \in B_n$. Let $\beta_+=\sigma_n^{-1}\iota(\beta) \in B_{n+1}$.
First notice that
\begin{align*}
&h_{\beta_+}(x_j)x_j^{-1}= 
 h_{\iota(\beta)}(h_{\sigma_n^{-1}}(x_j))x_j^{-1} =
h_{\iota(\beta)}(x_j)x_j^{-1}  \ \ \text{for} \ 1\leqslant j\leqslant n-1, \\
& h_{\beta_+}(x_n)x_n^{-1}= 
h_{\iota(\beta)}(h_{\sigma_n^{-1}}(x_n))x_n^{-1} =
h_{\iota(\beta)}(x_{n+1})x_n^{-1} =
x_{n+1}x_n^{-1}, \\
& h_{\beta_+}(x_{n+1})x_{n+1}^{-1}= 
h_{\iota(\beta)}(h_{\sigma_n^{-1}}(x_{n+1}))x_{n+1}^{-1} =
h_{\iota(\beta)}(x_{n+1}^{-1}x_n x_{n+1})x_{n+1}^{-1} =
x_{n+1}^{-1} h_{\iota(\beta)}(x_n).
\end{align*}
Hence $Ker(\gamma_{\beta_+}) = \langle \langle \ x_{n+1}x_n^{-1} \ ; \
\iota_n(Ker(\gamma_{\beta})) \ \rangle \rangle$.

We can now define $\sigma^\QQ_{\beta,-1}:=\left (
G_{\beta} \ni [x]_{G_{\beta}} \mapsto [\iota_n(x)]_{G_{\beta_+}} \in G_{\beta_+}
\right )$, where $x \in \F_n$ and $[\cdot]_{G}$ is the quotient class in $G$.
Since $\iota_n(Ker(\gamma_{\beta}))  \subset Ker(\gamma_{\beta_+})$, then $\sigma^\QQ_{\beta,-1}$ is a well-defined group homomorphism.

Let us prove that $\sigma^\QQ_{\beta,-1}$ is surjective. Let $[y]_{G_{\beta_+}} \in G_{\beta_+}$, with $y \in \F_{n+1}$. Let $y'\in \iota_n(\F_n)$ be the word constructed from $y$ by replacing all letters $x_{n+1}$ with $x_n$. Hence $[y]_{G_{\beta_+}}=[y']_{G_{\beta_+}} \in Im(\sigma^\QQ_{\beta,-1})$ and $\sigma^\QQ_{\beta,-1}$ is surjective.

Let us prove that $\sigma^\QQ_{\beta,-1}$ is injective. Let $[\iota_n(x)]_{G_{\beta_+}} \in G_{\beta_+}$ (with $x\in \F_n$) be trivial. Then $\iota_n(x) \in Ker(\gamma_{\beta_+}) = \langle \langle \ x_{n+1}x_n^{-1} \ ; \
\iota_n(Ker(\gamma_{\beta})) \ \rangle \rangle$. Thus $\iota_n(x)$ is a product of conjugates (in $\F_{n+1}$) of terms $(x_{n+1}x_n^{-1})^{\pm 1}$ and/or terms in $\iota_n(Ker(\gamma_{\beta}))$. But since $\iota_n(x)$ is a free word without the letter $x_{n+1}$, we conclude that the conjugates of terms $(x_{n+1}x_n^{-1})^{\pm 1}$ in $\iota_n(x)$ cancel each other. Thus
$\iota_n(x) \in \iota_n(Ker(\gamma_{\beta}))$ and $[x]_{G_{\beta}}=1$. Hence 
$\sigma^\QQ_{\beta,-1}$ is injective.

\

\underline{Third step: Markov 2 again:}\\
Take $n \geqslant 1$, $\beta \in B_n$, and let $\beta_+=\sigma_n\iota(\beta) \in B_{n+1}$. The proof is similar as in the Second step, with the following differences:
\begin{align*}
& h_{\beta_+}(x_n)x_n^{-1}= 
h_{\iota(\beta)}(x_n) x_{n+1} \left (h_{\iota(\beta)}(x_n) \right )^{-1} x_n^{-1},
 \\
& h_{\beta_+}(x_{n+1})x_{n+1}^{-1}= 
h_{\iota(\beta)}(x_n) x_{n+1}^{-1},\\
& Ker(\gamma_{\beta_+}) = \langle \langle \ h_{\iota(\beta)}(x_n) x_{n+1}^{-1} \ ; \
\iota_n(Ker(\gamma_{\beta})) \ \rangle \rangle,
\end{align*}
and we replace the  $x_{n+1}$ with $h_{\iota(\beta)}(x_n)$ in the proof of the surjectivity of $\sigma^\QQ_{\beta,1}$.
\end{proof}

\begin{remark}
In the second step of the previous proof, one can alternatively prove that $\sigma^\QQ_{\beta,-1}$ is an isomorphism by observing that it corresponds to the sequence of  Tietze transformations going from the presentation
$$\langle x_1, \ldots,x_n|h_{\beta}(x_1)x_1^{-1}, \ldots, h_{\beta}(x_{n-1})x_{n-1}^{-1}, h_{\beta}(x_n)x_n^{-1} \rangle$$
 of $G_{\beta}$ to the presentation
\begin{align*}
&  \langle x_1, \ldots,x_n, x_{n+1}|h_{\iota(\beta)}(x_1)x_1^{-1}, \ldots, h_{\iota(\beta)}(x_{n-1})x_{n-1}^{-1}, x_{n+1}x_n^{-1}, x_{n+1}^{-1} h_{\iota(\beta)}(x_n) \rangle = \\
& \langle x_1, \ldots,x_n, x_{n+1}|h_{\beta_+}(x_1)x_1^{-1}, \ldots, h_{\beta_+}(x_{n-1})x_{n-1}^{-1}, h_{\beta_+}(x_n)x_n^{-1}, h_{\beta_+}(x_{n+1})x_{n+1}^{-1} \rangle 
\end{align*}
of $G_{\beta_+}$. 
\end{remark}

Finally, let us fix notations for epimorphisms that go lower than the  groups of the braid closures.

\begin{exemple}\label{ex:psi}
	 Let $\left \{\psi_\beta\colon G_{\beta}\twoheadrightarrow \Gamma_{\psi_\beta}\right \}_{\beta \in \sqcup_{n\geqslant 1} B_n}$ be a family of epimorphisms such that 
\begin{itemize}
\item 	$\Phi_{n(\beta)}$ factors through $ \psi_\beta \circ  \gamma_\beta$,
\item  $\chi^\QQ_{\beta,\alpha}\left ( Ker(\psi_\beta)\right ) = Ker(\psi_{\alpha^{-1} \beta \alpha})$,
\item  $\sigma^\QQ_{\beta,\varepsilon}\left ( Ker(\psi_\beta)\right ) = Ker(\psi_{\sigma_n^\varepsilon \iota(\beta)})$.
\end{itemize}	
Then it follows from Propositions \ref{prop:quotient} and \ref{prop:gamma} that
the family 
$$\QQ=\left \{ \psi_\beta \circ  \gamma_\beta\colon \F_{n(\beta)}\twoheadrightarrow \Gamma_{\psi_\beta} \right \}$$ is Markov-admissible.

Note that the first assumption (that $\Phi_{n(\beta)}$ factors through $ \psi_\beta \circ  \gamma_\beta$) is not necessary for $\QQ$ to be Markov-admissible, but is relevant in the previous Theorem \ref{thm:burau:alexander:twisted} in order to compare the function $F_{\QQ}$ of the next section with $L^2$-Alexander torsions (for which such a factoring property is assumed, see \cite{BAC}).
\end{exemple}

\subsection{A sufficient condition for Markov invariance}\label{sec:invariance}

For $\QQ=\{Q_\beta \}$ a Markov-admissible family, we define the function
$$
F_{\mathcal{Q}}:=
\begin{pmatrix}
 &\sqcup_{n\geqslant 1} B_n &\to& \mathcal{F}(\R_{>0}, \R_{\geqslant 0})/\{t \mapsto t^m, m \in \Z \} \\
&\beta &\mapsto & 
\left [t \mapsto 
\dfrac{
	\det^r_{G_{Q_\beta}}\left (
	\overline{\mathcal{B}}^{(2)}_{t,Q_{\beta}}
	(\beta) - \Id^{\oplus (n-1)}
	\right )
}{\max(1,t)^n}
\right ]
\end{pmatrix},
$$
taking values in equivalence classes $[t \mapsto f(t) ]$ of selfmaps of $\R_{>0}$, up to multiplication by monomials with integer exponents.

The following Propositions \ref{prop:first} and \ref{prop:second} show that $F_{\mathcal{Q}}$ is invariant under both Markov moves when $\mathcal{Q}$ descends to the groups of braid closures or lower. As a consequence, they provide a second separate proof of the fact that the twisted $L^2$-Alexander torsions of Theorem \ref{thm:burau:alexander:twisted} are invariants of links.

\begin{prop}\label{prop:first}
Let $\QQ=\{ \psi_\beta \circ  \gamma_\beta\colon \F_{n(\beta)}\twoheadrightarrow \Gamma_{\psi_\beta} \}$ be as in Example \ref{ex:psi}.
Then $F_{\mathcal{Q}}$ is invariant under the first Markov move.
\end{prop}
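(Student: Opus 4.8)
The plan is to show that $F_{\mathcal{Q}}$ takes the same value on a braid $\beta \in B_n$ and on its conjugate $\alpha^{-1}\beta\alpha$ for any $\alpha \in B_n$; since the first Markov move replaces a braid by a conjugate, this suffices. The starting point is the anti-multiplication formula of Proposition \ref{prop:L2burau:mult}, applied twice, to express $\overline{\mathcal{B}}^{(2)}_{t,Q}(\alpha^{-1}\beta\alpha)$ in terms of $\overline{\mathcal{B}}^{(2)}_{t,\cdot}(\alpha^{-1})$, $\overline{\mathcal{B}}^{(2)}_{t,\cdot}(\beta)$ and $\overline{\mathcal{B}}^{(2)}_{t,\cdot}(\alpha)$, where the ``$\cdot$'' in the subscripts are the appropriately post-composed epimorphisms dictated by the formula. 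Here I would use the Markov-admissibility data of $\mathcal{Q}$: condition (1) gives the isomorphism $\chi^{\mathcal{Q}}_{\beta,\alpha}\colon G_{Q_\beta} \to G_{Q_{\alpha^{-1}\beta\alpha}}$ intertwining $Q_{\alpha^{-1}\beta\alpha}\circ h_\alpha$ and $Q_\beta$, which is exactly what is needed to identify the group rings appearing in the three factors, so that all the $L^2$-Burau operators live over a single group $G:=G_{Q_{\alpha^{-1}\beta\alpha}}$ (or $G_{Q_\beta}$, via the isomorphism).

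Next I would compute the Fuglede--Kadison determinant of $\overline{\mathcal{B}}^{(2)}_{t,Q}(\alpha^{-1}\beta\alpha) - \Id^{\oplus(n-1)}$. The key algebraic trick is conjugation invariance: writing $\overline{\mathcal{B}}^{(2)}_{t,Q}(\alpha^{-1}\beta\alpha) = A^{-1} \overline{\mathcal{B}}^{(2)}_{t,Q'}(\beta) A$ for a suitable invertible operator $A$ (built from $\overline{\mathcal{B}}^{(2)}_{t,\cdot}(\alpha)$ and its relation to $\overline{\mathcal{B}}^{(2)}_{t,\cdot}(\alpha^{-1})$ via the multiplicativity formula applied to $\alpha^{-1}\alpha = 1$), we get
$$
\overline{\mathcal{B}}^{(2)}_{t,Q}(\alpha^{-1}\beta\alpha) - \Id^{\oplus(n-1)} = A^{-1}\left(\overline{\mathcal{B}}^{(2)}_{t,Q'}(\beta) - \Id^{\oplus(n-1)}\right)A,
$$
where I must be careful that the ``$Q'$'' epimorphism decorating $\beta$ really is $Q_\beta$ up to the isomorphism $\chi^{\mathcal{Q}}_{\beta,\alpha}$; this is precisely guaranteed by Markov-admissibility condition (1). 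Then multiplicativity of $\det^r_G$ (Proposition \ref{prop:detFK:properties}(1), in its $\det^r$ variant) collapses the $A^{-1}(\cdots)A$ sandwich: $\det^r_G(A^{-1}) \det^r_G(A) = 1$, so $\det^r_G$ of the conjugated operator equals $\det^r_G$ of the original. Since the isomorphism $\chi^{\mathcal{Q}}_{\beta,\alpha}$ also leaves Fuglede--Kadison determinants unchanged (induction property, Proposition \ref{prop:detFK:properties}(3), applied to an isomorphism), and the normalizing factor $\max(1,t)^n$ depends only on $n$, which is unchanged by conjugation, we conclude $F_{\mathcal{Q}}(\alpha^{-1}\beta\alpha) = F_{\mathcal{Q}}(\beta)$.

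The main obstacle I anticipate is bookkeeping the epimorphisms correctly through the anti-multiplicativity formula: each application of Proposition \ref{prop:L2burau:mult} introduces a post-composition with $h_{(\cdot)}$, so the three factors naturally live over $\Gamma$ with *different* decorating maps $Q_{\alpha^{-1}\beta\alpha}$, $Q_{\alpha^{-1}\beta\alpha}\circ h_\alpha$, etc., and I must verify that Markov-admissibility condition (1) glues these into a coherent picture over one group ring so that the conjugation identity above literally holds as an equation of operators on $\ell^2(G)^{\oplus(n-1)}$. A secondary subtlety is injectivity/determinant-class issues: $\det^r_G$ vanishes on non-injective operators, so I should note that the conjugation argument respects the dichotomy — $\overline{\mathcal{B}}^{(2)}_{t,Q}(\alpha^{-1}\beta\alpha) - \Id$ is injective if and only if $\overline{\mathcal{B}}^{(2)}_{t,Q_\beta}(\beta) - \Id$ is, since $A$ is invertible — so both sides are simultaneously zero or simultaneously equal and nonzero, and the claimed equality of equivalence classes holds in all cases.
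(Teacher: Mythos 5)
Your overall route is the same as the paper's: decompose $\overline{\mathcal{B}}^{(2)}_{t,Q_{\alpha^{-1}\beta\alpha}}(\alpha^{-1}\beta\alpha)$ via Proposition \ref{prop:L2burau:mult}, eliminate the two outer $\alpha$-factors using multiplicativity of the regular Fuglede--Kadison determinant, and transfer the result from $\Gamma_{\psi_{\alpha^{-1}\beta\alpha}}$ to $\Gamma_{\psi_\beta}$ through the isomorphism $\chi^{\QQ}_{\beta,\alpha}$ and the induction property. However, there is a genuine gap in how you justify the central step. Writing $\gamma := Q_{\alpha^{-1}\beta\alpha}$, the decomposition reads
$$\overline{\mathcal{B}}^{(2)}_{t,\gamma}(\alpha^{-1}\beta\alpha)=\overline{\mathcal{B}}^{(2)}_{t,\gamma}(\alpha)\circ\overline{\mathcal{B}}^{(2)}_{t,\gamma\circ h_{\alpha}}(\beta)\circ\left(\overline{\mathcal{B}}^{(2)}_{t,\gamma\circ h_{\alpha^{-1}\beta\alpha}}(\alpha)\right)^{-1},$$
so it is a literal conjugation by $A=\overline{\mathcal{B}}^{(2)}_{t,\gamma}(\alpha)$ only if the two outer factors carry the same decorating epimorphism, i.e.\ only if $\gamma\circ h_{\alpha^{-1}\beta\alpha}=\gamma$. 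You attribute this to Markov-admissibility condition (1), but that condition only gives $Q_{\alpha^{-1}\beta\alpha}\circ h_{\alpha}=\chi^{\QQ}_{\beta,\alpha}\circ Q_{\beta}$, which identifies the \emph{middle} factor; applied to the pair $(\alpha^{-1}\beta\alpha,\alpha^{-1}\beta\alpha)$ it only yields $Q_{\alpha^{-1}\beta\alpha}\circ h_{\alpha^{-1}\beta\alpha}=\chi\circ Q_{\alpha^{-1}\beta\alpha}$ for some unknown automorphism $\chi$, and post-composing the decoration by an automorphism changes the operator. For the Markov-admissible family of identities, for instance, $\gamma\circ h_{\alpha^{-1}\beta\alpha}=h_{\alpha^{-1}\beta\alpha}\neq\id$, and the two outer factors genuinely differ. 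As written, your argument would establish first-Markov invariance for \emph{every} Markov-admissible family, with the hypothesis of Example \ref{ex:psi} never used --- a red flag, since the paper stresses in Section \ref{sec:counter:ex} that descending at least to $\gamma_\beta$ is exactly what produces these cancellations.

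The missing ingredient is the defining property of the families of Example \ref{ex:psi}: since $Q_\sigma=\psi_\sigma\circ\gamma_\sigma$ and $\gamma_\sigma\circ h_\sigma=\gamma_\sigma$ for every braid $\sigma$, one has $Q_{\alpha^{-1}\beta\alpha}\circ h_{\alpha^{-1}\beta\alpha}=Q_{\alpha^{-1}\beta\alpha}$, and only then do the outer factors coincide and your conjugation identity hold. (The paper does not even need the literal conjugation: it factors $XYZ-\Id=X\left(Y-X^{-1}Z^{-1}\right)Z$, cancels $\det{}^r_G(X)\det{}^r_G(Z)=t^{e}\,t^{-e}=1$ by Remark \ref{rem:det:l2burau}, where $e$ is the exponent sum of $\alpha$, and then invokes $Q_\sigma\circ h_\sigma=Q_\sigma$ to get $X^{-1}Z^{-1}=\Id$.) With this point supplied, the rest of your sketch --- invertibility of $A$, multiplicativity of $\det{}^r_G$, the injectivity dichotomy, the transfer through $\chi^{\QQ}_{\beta,\alpha}$ by Proposition \ref{prop:detFK:properties} (3), and the unchanged normalization $\max(1,t)^n$ --- is correct and agrees with the paper's proof.
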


\begin{proof} Let $\QQ=\{  Q_\beta :=\psi_\beta \circ  \gamma_\beta\colon \F_{n(\beta)}\twoheadrightarrow \Gamma_{\psi_\beta} \}$.
Let $n \geqslant 1$ be an integer, $t>0$, and $\alpha, \beta \in B_n$.
We will prove that
$$
\det{}^r_{\Gamma_{\psi_{\alpha^{-1}\beta \alpha}}}\left (
\overline{\mathcal{B}}^{(2)}_{t,Q_{\alpha^{-1}\beta \alpha} }
(\alpha^{-1}\beta \alpha) - \Id^{\oplus (n-1)}
\right ) =
\det{}^r_{\Gamma_{\psi_\beta}}\left (
\overline{\mathcal{B}}^{(2)}_{t,Q_\beta}
(\beta) - \Id^{\oplus (n-1)}
\right ).
$$ 

Observe that for any epimorphism $\gamma\colon \F_n \to G$,  Proposition \ref{prop:L2burau:mult} implies that:
	$$
\Id^{\oplus (n-1)} =
\overline{\mathcal{B}}^{(2)}_{t,\gamma }(1) =
\overline{\mathcal{B}}^{(2)}_{t,\gamma }(\alpha \alpha^{-1}) =
\overline{\mathcal{B}}^{(2)}_{t,\gamma }(\alpha^{-1}) \circ 
\overline{\mathcal{B}}^{(2)}_{t,\gamma \circ h_{\alpha^{-1}}}(\alpha),
$$
thus $
\overline{\mathcal{B}}^{(2)}_{t,\gamma }(\alpha^{-1}) =
\left (\overline{\mathcal{B}}^{(2)}_{t,\gamma \circ h_{\alpha^{-1}}}(\alpha)\right )^{-1}
$.

Consequently, we have for any epimorphism $\gamma\colon \F_n \to G$:
\begin{align*}
\overline{\mathcal{B}}^{(2)}_{t,\gamma}(\alpha^{-1}\beta \alpha) &=
	\overline{\mathcal{B}}^{(2)}_{t,\gamma}(\alpha) \circ
	\overline{\mathcal{B}}^{(2)}_{t,\gamma \circ h_{\alpha}}(\beta) \circ
 \overline{\mathcal{B}}^{(2)}_{t,\gamma \circ h_{\beta \alpha}}(\alpha^{-1})\\
 &=
\overline{\mathcal{B}}^{(2)}_{t,\gamma}(\alpha) \circ
\overline{\mathcal{B}}^{(2)}_{t,\gamma \circ h_{\alpha}}(\beta) \circ
\left (\overline{\mathcal{B}}^{(2)}_{t,\gamma \circ h_{\alpha^{-1}\beta \alpha}}(\alpha)\right )^{-1}.
\end{align*}
Hence, for any epimorphism $\gamma\colon \F_n \to G$:
\begin{align*}
&\det{}^r_{G}\left (
\overline{\mathcal{B}}^{(2)}_{t,\gamma}
(\alpha^{-1}\beta \alpha) - \Id^{\oplus (n-1)}
\right ) \\
&= \det{}^r_{G}\left (
\overline{\mathcal{B}}^{(2)}_{t,\gamma}(\alpha) \circ
\overline{\mathcal{B}}^{(2)}_{t,\gamma \circ h_{\alpha}}(\beta) \circ
\left (\overline{\mathcal{B}}^{(2)}_{t,\gamma \circ h_{\alpha^{-1}\beta \alpha}}(\alpha)\right )^{-1} - \Id^{\oplus (n-1)}
\right )\\
&= \det{}^r_{G}\left (
\overline{\mathcal{B}}^{(2)}_{t,\gamma \circ h_{\alpha}}(\beta) 
- 
\left ( \overline{\mathcal{B}}^{(2)}_{t,\gamma}(\alpha)   \right )^{-1} \circ 
\overline{\mathcal{B}}^{(2)}_{t,\gamma \circ h_{\alpha^{-1}\beta \alpha}}(\alpha)
\right ),
\end{align*}
where the second equality follows from Remark \ref{rem:det:l2burau} and Proposition \ref{prop:detFK:properties} (1).

For $\gamma= Q_{\alpha^{-1}\beta \alpha}$, we thus have:
\begin{align*}
&\det{}^r_{\Gamma_{\psi_{\alpha^{-1}\beta \alpha}}}\left (
\overline{\mathcal{B}}^{(2)}_{t,Q_{\alpha^{-1}\beta \alpha} }
(\alpha^{-1}\beta \alpha) - \Id^{\oplus (n-1)}
\right ) \\
&= \det{}^r_{\Gamma_{\psi_{\alpha^{-1}\beta \alpha}}}\left (
\overline{\mathcal{B}}^{(2)}_{t,Q_{\alpha^{-1}\beta \alpha} \circ h_{\alpha}}(\beta) 
- 
\left ( \overline{\mathcal{B}}^{(2)}_{t,Q_{\alpha^{-1}\beta \alpha}}(\alpha)   \right )^{-1} \circ 
\overline{\mathcal{B}}^{(2)}_{t,Q_{\alpha^{-1}\beta \alpha} \circ h_{\alpha^{-1}\beta \alpha}}(\alpha)
\right )\\
&= \det{}^r_{\Gamma_{\psi_{\alpha^{-1}\beta \alpha}}}\left (
\overline{\mathcal{B}}^{(2)}_{t,  \chi^\mathcal{Q}_{\beta,\alpha} \circ Q_\beta}(\beta) 
- 
\left ( \overline{\mathcal{B}}^{(2)}_{t,Q_{\alpha^{-1}\beta \alpha}}(\alpha)   \right )^{-1} \circ 
\overline{\mathcal{B}}^{(2)}_{t,Q_{\alpha^{-1}\beta \alpha} \circ h_{\alpha^{-1}\beta \alpha}}(\alpha)
\right ).
\end{align*}
Now, since for  every braid $\sigma \in B_n$, $\gamma_\sigma \circ h_{\sigma} = \gamma_\sigma$ and $Q_\sigma = \psi_\sigma \circ \gamma_\sigma$, we obtain:
\begin{align*}
&\det{}^r_{\Gamma_{\psi_{\alpha^{-1}\beta \alpha}}}\left (
\overline{\mathcal{B}}^{(2)}_{t,Q_{\alpha^{-1}\beta \alpha} }
(\alpha^{-1}\beta \alpha) - \Id^{\oplus (n-1)}
\right ) \\
&= \det{}^r_{\Gamma_{\psi_{\alpha^{-1}\beta \alpha}}}\left (
\overline{\mathcal{B}}^{(2)}_{t,  \chi^\mathcal{Q}_{\beta,\alpha} \circ Q_\beta}(\beta) 
- 
\left ( \overline{\mathcal{B}}^{(2)}_{t,Q_{\alpha^{-1}\beta \alpha}}(\alpha)   \right )^{-1} \circ 
\overline{\mathcal{B}}^{(2)}_{t,Q_{\alpha^{-1}\beta \alpha} \circ h_{\alpha^{-1}\beta \alpha}}(\alpha)
\right )\\
&= \det{}^r_{\Gamma_{\psi_{\alpha^{-1}\beta \alpha}}}\left (
\overline{\mathcal{B}}^{(2)}_{t,  \chi^\mathcal{Q}_{\beta,\alpha} \circ Q_\beta}(\beta) 
- 
\left ( \overline{\mathcal{B}}^{(2)}_{t,Q_{\alpha^{-1}\beta \alpha}}(\alpha)   \right )^{-1} \circ 
\overline{\mathcal{B}}^{(2)}_{t,Q_{\alpha^{-1}\beta \alpha} }(\alpha)
\right )\\
&= \det{}^r_{\Gamma_{\psi_{\alpha^{-1}\beta \alpha}}}\left (
\overline{\mathcal{B}}^{(2)}_{t,  \chi^\mathcal{Q}_{\beta,\alpha} \circ Q_\beta}(\beta) 
- 
\Id^{\oplus (n-1)}
\right )\\
&= \det{}^r_{\Gamma_{\psi_\beta}}\left (
\overline{\mathcal{B}}^{(2)}_{t,  Q_\beta}(\beta) 
- 
\Id^{\oplus (n-1)}
\right ),
\end{align*}
where the last equality follows from Proposition \ref{prop:detFK:properties} (3).
\end{proof}

\begin{prop}\label{prop:second}
Let $\QQ=\{ \psi_\beta \circ  \gamma_\beta\colon \F_{n(\beta)}\twoheadrightarrow \Gamma_{\psi_\beta} \}$ be as in Example \ref{ex:psi}.
Then $F_{\mathcal{Q}}$ is invariant under the second Markov move.
\end{prop}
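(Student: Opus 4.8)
The goal is to show that $F_{\mathcal Q}$ is unchanged (up to the $\doteq$-equivalence, i.e. up to a power of $t$) when we replace a braid $\beta\in B_n$ by either $\sigma_n\iota_n(\beta)$ or $\sigma_n^{-1}\iota_n(\beta)$ in $B_{n+1}$. The strategy mirrors the proof of Proposition~\ref{prop:first}: use the (anti-)multiplication formula of Proposition~\ref{prop:L2burau:mult} to factor $\overline{\mathcal B}^{(2)}_{t,\gamma}(\sigma_n^{\varepsilon}\iota_n(\beta))$ as $\overline{\mathcal B}^{(2)}_{t,\gamma}(\sigma_n^{\varepsilon})\circ\overline{\mathcal B}^{(2)}_{t,\gamma\circ h_{\sigma_n^{\varepsilon}}}(\iota_n(\beta))$, then analyze the block structure of the operator $\overline{\mathcal B}^{(2)}_{t,Q_{\sigma_n^{\varepsilon}\iota(\beta)}}(\sigma_n^{\varepsilon}\iota_n(\beta))-\Id^{\oplus n}$ acting on $\ell^2(G)^{\oplus n}$ (here $G=\Gamma_{\psi_{\sigma_n^{\varepsilon}\iota(\beta)}}$), and peel off the last row/column using the block-triangular and $2\times 2$ determinant properties in Proposition~\ref{prop:detFK:properties}, leaving behind exactly the $(n-1)\times(n-1)$ operator $\overline{\mathcal B}^{(2)}_{t,Q_\beta}(\beta)-\Id^{\oplus(n-1)}$ together with a leftover factor which I expect to be a power of $t$ (or something absorbed by the $\max(1,t)^{n}$ versus $\max(1,t)^{n+1}$ normalization).

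First I would pin down how $\iota_n(\beta)\in B_{n+1}$ acts: since $\beta$ involves only $\sigma_1,\dots,\sigma_{n-1}$, the Artin action $h_{\iota_n(\beta)}$ fixes $x_{n+1}$ (equivalently $g_{n+1}$) and restricts to $h_\beta$ on the first $n$ generators; consequently the reduced Burau jacobian $\overline{\mathcal B}^{(2)}_{t,\gamma}(\iota_n(\beta))$ on $\ell^2(G)^{\oplus n}$ (indices running over $g_1,\dots,g_n$) has block form $\begin{pmatrix}\overline{\mathcal B}^{(2)}_{t,\gamma|}(\beta)&0\\ *&\Id\end{pmatrix}$ — in fact I expect the $*$ to vanish as well, making it block-diagonal, because $h_\beta(g_j)$ for $j\le n-1$ involves only $g_0,\dots,g_n$ and the Fox derivatives $\partial h_\beta(g_j)/\partial g_n$ are zero for $j<n$. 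Then I would multiply by the explicit matrix for $\overline{\mathcal B}^{(2)}_{t,\gamma}(\sigma_n^{\varepsilon})$ (the $2\times 2$ corner block given at the end of Section~\ref{sec:prelim}, namely $\Id^{\oplus(n-2)}\oplus\begin{pmatrix}\Id & tR_{\gamma(g_{n+1}g_n^{-1})}\\0&-tR_{\gamma(g_{n+1}g_n^{-1})}\end{pmatrix}$ for $\varepsilon=+1$, and its inverse-type analogue for $\varepsilon=-1$), using Proposition~\ref{prop:L2burau:mult} and the twist $\gamma\circ h_{\sigma_n^{\varepsilon}}$, which only changes the entries in the last coordinate. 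The upshot should be that $\overline{\mathcal B}^{(2)}_{t,Q_{\sigma_n^{\varepsilon}\iota(\beta)}}(\sigma_n^{\varepsilon}\iota_n(\beta))-\Id^{\oplus n}$ has the shape
\begin{equation*}
\begin{pmatrix}
\overline{\mathcal B}^{(2)}_{t,Q_\beta}(\beta)-\Id^{\oplus(n-1)} & 0 & v\\
0 & \ast & \ast\\
w & \ast & \ast
\end{pmatrix}
\end{equation*}
where, via the Markov-admissibility identity $Q_{\sigma_n^{\varepsilon}\iota(\beta)}\circ\iota_n=\sigma^{\mathcal Q}_{\beta,\varepsilon}\circ Q_\beta$ and the induction property Proposition~\ref{prop:detFK:properties}(3), the top-left block is genuinely the $\beta$-operator over $\Gamma_{\psi_\beta}\hookrightarrow\Gamma_{\psi_{\sigma_n^{\varepsilon}\iota(\beta)}}$.

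Next I would extract the determinant. Using Proposition~\ref{prop:detFK:properties}(2) and (6) (the $2\times 2$ trick) I would clear the last coordinate: the new $g_{n+1}$-row/column carries exactly one entry of the form $\pm tR_{\cdot}-\Id$ or $\Id$ (coming from the $\sigma_n^{\varepsilon}$ block), and its Fuglede--Kadison determinant is $\max(1,t)$ by Proposition~\ref{prop:detFK:properties}(5) since the relevant group element $\psi(g_{n+1}g_n^{-1})$ — a meridian — has infinite order in $\Gamma_{\psi_{\sigma_n^{\varepsilon}\iota(\beta)}}$ (this is where the factoring of $\Phi$ through the quotient is used, just as in Fact~1 of the proof of Theorem~\ref{thm:burau:alexander:twisted}). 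After this elimination the remaining $(n-1)\times(n-1)$ operator is $\overline{\mathcal B}^{(2)}_{t,Q_\beta}(\beta)-\Id^{\oplus(n-1)}$, possibly conjugated/scaled by an invertible right-multiplication operator whose determinant is a power of $t$. Collecting factors, $\det^r(\cdots)_{\text{for }\sigma_n^{\varepsilon}\iota(\beta)}=\max(1,t)\cdot t^{?}\cdot\det^r_{\Gamma_{\psi_\beta}}\big(\overline{\mathcal B}^{(2)}_{t,Q_\beta}(\beta)-\Id^{\oplus(n-1)}\big)$, and dividing by $\max(1,t)^{n+1}$ versus $\max(1,t)^{n}$ respectively accounts for the extra $\max(1,t)$, while the $t^{?}$ is killed by the $\doteq$-quotient in the target of $F_{\mathcal Q}$ — so $F_{\mathcal Q}(\sigma_n^{\varepsilon}\iota(\beta))=F_{\mathcal Q}(\beta)$. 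I would also record the degenerate case: if either operator is non-injective then, by the multiplicativity and block arguments, both sides are $0$, consistent with the $\R_{\ge 0}$-valued convention.

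\textbf{Main obstacle.} The delicate point is the bookkeeping of the last row/column after composing with the $\sigma_n^{\varepsilon}$-block under the \emph{shifted twist} $\gamma\circ h_{\sigma_n^{\varepsilon}}$: I must check that no off-diagonal contributions leak back into the first $n-1$ coordinates (so that the top-left block is cleanly the $\beta$-operator), and that the single surviving ``new'' entry is exactly of the infinite-order-element type needed to invoke Proposition~\ref{prop:detFK:properties}(5)/(6). This amounts to an explicit but finite matrix computation with the generator images listed in Section~\ref{sec:prelim}, treating $\varepsilon=+1$ and $\varepsilon=-1$ separately (the $\sigma_n$ and $\sigma_n^{-1}$ corner blocks differ), and carefully tracking which $g_i g_{i+1}^{-1}$ appears so that the Markov-admissibility compatibility $\sigma^{\mathcal Q}_{\beta,\varepsilon}$ lines the groups up correctly.
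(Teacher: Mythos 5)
Your overall strategy (factor via Proposition \ref{prop:L2burau:mult}, manipulate the matrix into block-triangular form, then apply the Fuglede--Kadison determinant properties) is the right one, but the central step is wrong as stated and the real mechanism is missing. The claim that $\overline{\mathcal{B}}^{(2)}_{t,\gamma}(\iota_n(\beta))$ is block diagonal because ``$\partial h_\beta(g_j)/\partial g_n=0$ for $j<n$'' is false: already for $\beta=\sigma_{n-1}$ one has $h_{\sigma_{n-1}}(g_{n-1})=g_ng_{n-1}^{-1}g_{n-2}$, so $\partial h_{\iota(\beta)}(g_{n-1})/\partial g_n=1\neq 0$; in general the $g_n$-row of $\overline{\mathcal{B}}^{(2)}_{t,\gamma}(\iota_n(\beta))$ is a nonzero row $\mathcal{R}$ of Fox derivatives. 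Because of this row (and of the further entries created when you compose with the $\sigma_n^{\pm1}$ block under the shifted twist and subtract $\Id^{\oplus n}$), the operator $\overline{\mathcal{B}}^{(2)}_{t,Q_{\beta_\pm}}(\beta_\pm)-\Id^{\oplus n}$ is \emph{not} block triangular, and ``clearing the last coordinate'' is not a routine application of Proposition \ref{prop:detFK:properties} (2)/(6). What is actually needed is a further left multiplication by an auxiliary operator $\mathfrak{G}$ whose last row is $\bigl(tR_{Q(g_1)}-\Id,\ldots,t^nR_{Q(g_n)}-\Id\bigr)$, computed against $\overline{\mathcal{B}}^{(2)}(\iota(\beta))$ via the fundamental formula of Fox calculus (Proposition \ref{prop:fox}), and then---this is the crux---the identities $Q_{\beta_-}(g_j)=(Q_{\beta_-}\circ h_{\iota(\beta)})(g_j)$ for $j\leqslant n-1$ in the negative case, and $Q_{\beta_+}\bigl(g_{n-1}-g_ng_{n+1}^{-1}g_n\bigr)=0$ in the positive case, which follow from $Q_{\beta_\pm}\circ h_{\beta_\pm}=Q_{\beta_\pm}$ and are exactly what kill the bottom row and make the product block triangular. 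This is precisely where the hypothesis that $\QQ$ descends to the braid-closure groups or deeper enters; your sketch never uses it, and a peeling argument that does not would equally ``prove'' invariance for the identity and abelianization families, contradicting Theorems \ref{thm:contrex:abel} and \ref{thm:contrex:id}. So the ``delicate bookkeeping'' you flag as the main obstacle is in fact the substance of the proof, and as proposed it would fail.

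Two smaller inaccuracies. First, by the anti-multiplicativity of Proposition \ref{prop:L2burau:mult}, $\overline{\mathcal{B}}^{(2)}_{t,\gamma}(\sigma_n^{\varepsilon}\iota(\beta))=\overline{\mathcal{B}}^{(2)}_{t,\gamma}(\iota(\beta))\circ\overline{\mathcal{B}}^{(2)}_{t,\gamma\circ h_{\iota(\beta)}}(\sigma_n^{\varepsilon})$; the factorization you wrote is the one for $\iota(\beta)\sigma_n^{\varepsilon}$ (harmless up to conjugation and Proposition \ref{prop:first}, but it changes which twists appear where in the matrices). Second, after the correct reduction the leftover diagonal entry is not a meridian-type term of determinant $\max(1,t)$: in the negative case it is $-\Id+t^{n+1}R_{Q_{\beta_-}(g_nh_{\iota(\beta)}(g_{n-1}^{-1})g_n)}$, of determinant $\max(1,t)^{n+1}$ (its $\Phi$-value is $n+1\neq 0$, whence infinite order), while the auxiliary operators $\mathfrak{G}$ and $\mathfrak{D}$ contribute $\max(1,t)^{n}$ and $t^{\pm1}$; it is this balance of exponents, not a single factor $\max(1,t)$, that reconciles the normalizations $\max(1,t)^{n}$ and $\max(1,t)^{n+1}$, with the leftover power of $t$ absorbed by the $\doteq$-equivalence.
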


\begin{proof}
	For any braid $\beta$ let us denote $Q_\beta := \psi_\beta \circ  \gamma_\beta \colon \F_{n(\beta)} \twoheadrightarrow \Gamma_{\psi_{\beta}}$.
	
	\
	
	\underline{First step: negative crossing:}
	
	Let $n \in \N_{\geqslant 1}$, $\beta \in B_n$ and 
$\beta_- = \sigma_n^{-1} \iota(\beta) \in B_{n+1}$. Let $t>0$. We will prove that
$$
\dfrac{
	\det^r_{\Gamma_{\psi_{\beta_-}}}\left (
	\overline{\mathcal{B}}^{(2)}_{t,Q_{\beta_-}}
	(\beta_-) - \Id^{\oplus n}
	\right )
}{\max(1,t)^{n+1}} = \dfrac{1}{t} \cdot 
\dfrac{
	\det^r_{\Gamma_{\psi_\beta}}\left (
	\overline{\mathcal{B}}^{(2)}_{t,Q_{\beta}}
	(\beta) - \Id^{\oplus (n-1)}
	\right )
}{\max(1,t)^n}.
$$
In order to do this, we will compose $\overline{\mathcal{B}}^{(2)}_{t,Q_{\beta_-}}
(\beta_-) - \Id^{\oplus n}$ with two operators (one named $\mathfrak{G}$ on the left, one named $\mathfrak{D}$ on the right) so that we obtain a block triangular operator with the upper left block ``mostly'' equal to $\overline{\mathcal{B}}^{(2)}_{t,Q_{\beta}}
(\beta) - \Id^{\oplus (n-1)}$.

First, it follows from Proposition \ref{prop:L2burau:mult} that
$$ \overline{\mathcal{B}}^{(2)}_{t,Q_{\beta_-}}
(\beta_-) = 
\overline{\mathcal{B}}^{(2)}_{t,Q_{\beta_-}}
\left (\sigma_n^{-1} \iota(\beta)\right ) =
\overline{\mathcal{B}}^{(2)}_{t,Q_{\beta_-}}
\left (\iota(\beta)\right )
\overline{\mathcal{B}}^{(2)}_{t,Q_{\beta_-}\circ h_{\iota(\beta)}}
\left (\sigma_n^{-1} \right ).
$$

Recall that $\overline{\mathcal{B}}^{(2)}_{t,id}
\left (\sigma_n^{-1} \right ) = \begin{pmatrix}
\Id & \ & \ & \ & 0 \\
\ & \ddots & \ & \ & \vdots \\
\ & \ &\Id & \ & 0 \\
\ & \ & \ &\Id &\Id \\
0 & \ldots & 0 & 0 & -\frac{1}{t} R_{g_{n-1}g_n^{-1}}
\end{pmatrix}$, thus the operator defined as
$\mathfrak{D} := \left (\overline{\mathcal{B}}^{(2)}_{t,Q_{\beta_-}\circ h_{\iota(\beta)}}
\left (\sigma_n^{-1} \right )\right )^{-1}$ is equal to:
$$\mathfrak{D}:= \left (\overline{\mathcal{B}}^{(2)}_{t,Q_{\beta_-}\circ h_{\iota(\beta)}}
\left (\sigma_n^{-1} \right )\right )^{-1} = \begin{pmatrix}
\Id & \ & \ & \ & 0 \\
\ & \ddots & \ & \ & \vdots \\
\ & \ &\Id & \ & 0 \\
\ & \ & \ &\Id &  {t} R_{\left (Q_{\beta_-}\circ h_{\iota(\beta)}\right )(g_{n}g_{n-1}^{-1})} \\
0 & \ldots & 0 & 0 & -{t} R_{\left (Q_{\beta_-}\circ h_{\iota(\beta)}\right )(g_{n}g_{n-1}^{-1})}
\end{pmatrix}.$$
 We therefore compute:
\begin{align*}
\overline{\mathcal{B}}^{(2)}_{t,Q_{\beta_-}} 
(\beta_-) \circ  \mathfrak{D}&=
\overline{\mathcal{B}}^{(2)}_{t,Q_{\beta_-}}
\left (\iota(\beta)\right )
\circ 
\overline{\mathcal{B}}^{(2)}_{t,Q_{\beta_-}\circ h_{\iota(\beta)}}
\left (\sigma_n^{-1} \right ) \circ \mathfrak{D}\\
& =
\overline{\mathcal{B}}^{(2)}_{t,Q_{\beta_-}}
\left (\iota(\beta)\right )\\
&=
\begin{pmatrix}
\ & \ & \ & 0 \\
\ & \overline{\mathcal{B}}^{(2)}_{t,Q_{\beta_-}\circ\iota_{\F_n}}(\beta) & \ & \vdots \\
\ & \ & \ & 0 \\
 & \mathcal{R} &  &\Id 
\end{pmatrix},
\end{align*}
where the row $\mathcal{R}$ is the right multiplication operator by the row
$$  \begin{pmatrix}
\kappa(t,\Phi_{n+1},Q_{\beta_-})\left (\dfrac{\partial h_{\iota(\beta)}(g_j)}{\partial g_n}\right )
\end{pmatrix}
= \begin{pmatrix}
	\kappa(t,\Phi_{n+1}\circ \iota_{\F_n},Q_{\beta_-}\circ \iota_{\F_n})\left (\dfrac{\partial h_{\beta}(g_j)}{\partial g_n}\right )
\end{pmatrix},
$$
that has $n-1$ coefficients in $\R \Gamma_{\psi_{\beta_-}}$.

Now,  the fundamental formula of Fox calculus (Proposition \ref{prop:fox}) implies that:
$$
\begin{pmatrix}
R_{g_1-1} & \ldots & R_{g_n-1} 
\end{pmatrix} 
\cdot 
\begin{pmatrix}
R_{\dfrac{\partial h_{\iota(\beta)}(g_j)}{\partial g_i}}
\end{pmatrix}
_{1\leqslant i,j \leqslant n}
=\begin{pmatrix}
R_{h_{\iota(\beta)}(g_1)-1} & \ldots & R_{h_{\iota(\beta)}(g_n)-1} 
\end{pmatrix},
$$
thus, by applying $\kappa(t,\Phi_{n+1},Q_{\beta_-})$  and by definition of $\overline{\mathcal{B}}^{(2)}_{t,Q_{\beta_-}}
\left (\iota(\beta)\right )$, we obtain:
\begin{align*}
&\begin{pmatrix}
t R_{Q_{\beta_-}(g_1)}-\Id & \ldots & t^n R_{Q_{\beta_-}(g_n)}- \Id
\end{pmatrix} 
\cdot 
\overline{\mathcal{B}}^{(2)}_{t,Q_{\beta_-}}
\left (\iota(\beta)\right ) \\
&= \begin{pmatrix}
t R_{\left (Q_{\beta_-}\circ h_{\iota(\beta)}\right )(g_1)}-\Id & \hspace*{1.8cm} \ldots &
\hspace*{1.4cm} & t^n R_{\left (Q_{\beta_-}\circ h_{\iota(\beta)}\right )(g_n)}- \Id
\end{pmatrix}\\
&= \begin{pmatrix}
t R_{\left (Q_{\beta_-}\circ h_{\iota(\beta)}\right )(g_1)}-\Id & \ldots & 
t^{n-1} R_{\left (Q_{\beta_-}\circ h_{\iota(\beta)}\right )(g_{n-1})}-\Id &
t^n R_{Q_{\beta_-}(g_n)}- \Id
\end{pmatrix},
\end{align*}
where the second equality follows from the fact that $\beta \in B_{n}$ leaves $g_n$ unchanged in the Artin action.
Let us thus define the following block triangular operator $\mathfrak{G}$:
$$\mathfrak{G}:=
\begin{pmatrix}
 	 \Id & \ & \ & 0 \\
	\ & \ddots & \ & \vdots \\
	\ & \ & \Id & 0 \\
	t R_{Q_{\beta_-}(g_1)}-\Id & \ldots 
	& t^{n-1} R_{Q_{\beta_-}(g_{n-1})}- \Id
	& t^n R_{Q_{\beta_-}(g_n)}- \Id
\end{pmatrix}.$$
It immediately follows from what precedes that
\begin{align*}
& \mathfrak{G} \circ
\left (\overline{\mathcal{B}}^{(2)}_{t,Q_{\beta_-}} 
(\beta_-)\right ) \circ  \mathfrak{D} =
\mathfrak{G} \circ \left (\overline{\mathcal{B}}^{(2)}_{t,Q_{\beta_-}}
\left (\iota(\beta)\right )\right )=\\
&\begin{pmatrix}
\ & \ & \ & 0 \\
\ & \overline{\mathcal{B}}^{(2)}_{t,Q_{\beta_-}\circ\iota_{\F_n}}(\beta) & \ & \vdots \\
\ & \ & \ & 0 \\
t R_{\left (Q_{\beta_-}\circ h_{\iota(\beta)}\right )(g_1)}- \Id
\hspace*{-0.7cm}
 & \ldots  & 
 \hspace*{-0.7cm}
t^{n-1} R_{\left (Q_{\beta_-}\circ h_{\iota(\beta)}\right )(g_{n-1})}-\Id &
t^n R_{Q_{\beta_-}(g_n)}- \Id
\end{pmatrix}.
\end{align*}
On the other hand, we compute $\mathfrak{G} \circ
\left (\Id^{\oplus n} \right ) \circ  \mathfrak{D} =$
$$
\begin{pmatrix}
\Id & \  & \ & 0 \\
\ & \ddots  & \ & \vdots \\
\ & \  & \ & 0 \\
\ &  \ & \Id &  {t} R_{\left (Q_{\beta_-}\circ h_{\iota(\beta)}\right )(g_{n}g_{n-1}^{-1})} \\
t R_{Q_{\beta_-}(g_1)}- \Id
& 
\ldots  & 
t^{n-1} R_{Q_{\beta_-}(g_{n-1})}-\Id &
\star 
\end{pmatrix},
$$
with $\star = t^n R_{Q_{\beta_-}\left (g_n 
	h_{\iota(\beta)}(g_{n-1}^{-1}) g_{n-1}
	\right )}-t^{n+1} R_{Q_{\beta_-}\left (g_n 
	h_{\iota(\beta)}(g_{n-1}^{-1}) g_{n}
	\right )}
$.

Hence
\begin{align*}
& \mathfrak{G} \circ
\left (\overline{\mathcal{B}}^{(2)}_{t,Q_{\beta_-}} 
(\beta_-)-\Id^{\oplus n}\right ) \circ  \mathfrak{D} =\\
&\begin{pmatrix}
\ & \  & \ & 0 \\
\ & \overline{\mathcal{B}}^{(2)}_{t,Q_{\beta_-}\circ\iota_{\F_n}}(\beta) -\Id^{\oplus (n-1)}  & \ & \vdots \\
\ & \  & \ & 0 \\
\ &  \ & \ &  -{t} R_{\left (Q_{\beta_-}\circ h_{\iota(\beta)}\right )(g_{n}g_{n-1}^{-1})} \\
\ldots 
& 
t^{j} R_{\left (Q_{\beta_-}\circ h_{\iota(\beta)}\right )(g_{j})}-
t^{j} R_{Q_{\beta_-}(g_{j})}  & 
\ldots &
\square 
\end{pmatrix},
\end{align*}
where $j \in \{1, \ldots , n-1\}$ and
\begin{align*}
\square= t^n R_{Q_{\beta_-}(g_n)}-\Id -t^n R_{Q_{\beta_-}\left (g_n 
	h_{\iota(\beta)}(g_{n-1}^{-1}) g_{n-1}
	\right )}
+t^{n+1} R_{Q_{\beta_-}\left (g_n 
	h_{\iota(\beta)}(g_{n-1}^{-1}) g_{n}
	\right )}.
\end{align*}
Now we use the fact that our epimorphism $Q_{\beta_-}$ descends deeper than the braid closure group: indeed,
for every $j \in \{1, \ldots , n-1\}$, we have:
$$
Q_{\beta_-} (g_j) = \left (Q_{\beta_-} \circ h_{\beta_-}\right )(g_j)= \left (Q_{\beta_-} \circ h_{\iota(\beta)} \circ h_{\sigma_n^{-1}}\right )(g_j)= \left (Q_{\beta_-} \circ h_{\iota(\beta)} \right )(g_j).
$$
Thus $\mathfrak{G} \circ
\left (\overline{\mathcal{B}}^{(2)}_{t,Q_{\beta_-}} 
(\beta_-)-\Id^{\oplus n}\right ) \circ  \mathfrak{D}$ is actually upper block triangular and  equal to:
$$
\begin{pmatrix}
\ & 0 \\
 \overline{\mathcal{B}}^{(2)}_{t,Q_{\beta_-}\circ\iota_{\F_n}}(\beta) -\Id^{\oplus (n-1)}   & \vdots \\
\ & 0 \\
  \  &  -{t} R_{\left (Q_{\beta_-}\circ h_{\iota(\beta)}\right )(g_{n}g_{n-1}^{-1})} \\
0  &
-\Id 
+t^{n+1} R_{Q_{\beta_-}\left (g_n 
	h_{\iota(\beta)}(g_{n-1}^{-1}) g_{n}
	\right )} 
\end{pmatrix}.
$$
By applying $\det^r_{\Gamma_{\psi_{\beta_-}}}$ to the previous equality,
it therefore follows from Proposition \ref{prop:detFK:properties} (1), (2), (5), (9) and Remark \ref{rem:det:l2burau} that
\begin{align*}
&\max(1,t)^{n}  \cdot 
	\det{}^r_{\Gamma_{\psi_{\beta_-}}}\left (
	\overline{\mathcal{B}}^{(2)}_{t,Q_{\beta_-}}
	(\beta_-) - \Id^{\oplus n}
	\right )
\cdot t\\
&= 	\det{}^r_{\Gamma_{\psi_{\beta_-}}}\left (
	\overline{\mathcal{B}}^{(2)}_{t,Q_{\beta_-}\circ\iota_{\F_n}}
	(\beta) - \Id^{\oplus (n-1)}
	\right ) \cdot 
\max(1,t)^{n+1}.
\end{align*}
We conclude by using the fact that
$Q_{\beta_-}\circ\iota_{\F_n}= \sigma^{\mathcal{Q}}_{\beta,-1}  \circ Q_\beta$ (by
 Markov-admissibility of $\mathcal{Q}$) and Proposition \ref{prop:detFK:properties} (3).

	\
	
	\underline{Second step: positive crossing:}
			
Let $\beta_+ = \sigma_n \iota(\beta) \in B_{n+1}$. 
We will proceed almost exactly as in the first step, except for the following differences:
\begin{itemize}
\item We now aim to prove that
 $$\dfrac{
	\det{}^r_{\Gamma_{\psi_{\beta_+}}}\left (
	\overline{\mathcal{B}}^{(2)}_{t,Q_{\beta_+}}
	(\beta_+) - \Id^{\oplus n}
	\right )
}{\max(1,t)^{n+1}} = 
\dfrac{
	\det{}^r_{\Gamma_{\psi_\beta}}\left (
	\overline{\mathcal{B}}^{(2)}_{t,Q_{\beta}}
	(\beta) - \Id^{\oplus (n-1)}
	\right )
}{\max(1,t)^n}.$$
\item The operator $\mathfrak{D}$ becomes:
$$\mathfrak{D}:= \left (\overline{\mathcal{B}}^{(2)}_{t,Q_{\beta_+}\circ h_{\iota(\beta)}}
\left (\sigma_n \right )\right )^{-1} = \begin{pmatrix}
\Id & \ & \ & \ & 0 \\
\ & \ddots & \ & \ & \vdots \\
\ & \ & \Id & \ & 0 \\
\ & \ & \ & \Id &  \Id \\
0 & \ldots & 0 & 0 & -\frac{1}{t} R_{Q_{\beta_+}(g_{n}g_{n+1}^{-1})}
\end{pmatrix}.$$
\item The final lower right coefficient $\star$ of $\mathfrak{G} \circ  \mathfrak{D}$ becomes
$$\star= 
t^{n-1} R_{Q_{\beta_+}(g_{n-1})}
-\Id 
-t^{n-1} R_{Q_{\beta_+}\left (g_n 
	g_{n+1}^{-1} g_{n}
	\right )}
+\frac{1}{t} R_{Q_{\beta_+}\left (g_n 
	g_{n+1}^{-1}\right )}.$$
\item The final lower right coefficient $\square$ of $\mathfrak{G} \circ \left ( \overline{\mathcal{B}}^{(2)}_{t,Q_{\beta_+}} 
(\beta_+)-\Id^{\oplus n}\right )  \circ  \mathfrak{D}$ becomes
$$\square= 
-t^{n-1} R_{Q_{\beta_+}(g_{n-1})}
+t^n R_{Q_{\beta_+}(g_{n })}
+t^{n-1} R_{Q_{\beta_+}\left (g_n 
	g_{n+1}^{-1} g_{n}
	\right )}
-\frac{1}{t} R_{Q_{\beta_+}\left (g_n 
	g_{n+1}^{-1}\right )}.$$
\item The simplification 
$$\square= t^n R_{Q_{\beta_+}(g_{n })}
-\frac{1}{t} R_{Q_{\beta_+}\left (g_n 
	g_{n+1}^{-1}\right )}$$
comes from the fact that in the ring $\Z \F_{n+1}$ we have the equalities:
\begin{align*}
g_{n-1}- g_n g_{n+1}^{-1}g_{n}
&= g_{n} \left (	g_n^{-1} - g_{n+1}^{-1}g_n g_{n-1}^{-1} \right ) g_{n-1} \\
&= g_{n} \left (	g_n^{-1} - h_{\sigma_n}^{-1}\left (g_n^{-1}\right ) \right ) g_{n-1} \\
&= g_{n} \left (	g_n^{-1} - h_{\sigma_n}^{-1}\left (h_{\iota(\beta)}^{-1}(g_n^{-1})\right ) \right ) g_{n-1} \\
&= g_{n} \left (	g_n^{-1} - h_{\beta_+}^{-1}\left (g_n^{-1}\right ) \right ) g_{n-1} \\
&= g_{n} \left (	h_{\beta_+}\left (h_{\beta_+}^{-1}\left (g_n^{-1}\right )\right ) - h_{\beta_+}^{-1}\left (g_n^{-1}\right ) \right ) g_{n-1}.
\end{align*}
Hence, by composing with $\gamma_{\beta_+}\colon \Z[\F^n] \to  \Z[G_{\beta_+}]$, the quotient by all relations 
$h_{\beta_+}(*) = *$,
we get
$
\gamma_{\beta_+}\left (		g_{n-1}- g_n g_{n+1}^{-1}g_{n}	\right ) =
0.
$
The previous equality to $0$ remains true through any deeper epimorphism $Q_{\beta_+}=\psi_{\beta_+} \circ \gamma_{\beta_+}$.
\item Applying $\det^r_{\Gamma_{\psi_{\beta_+}}}$ to the final equality now yields (as expected):
\begin{align*}
&\max(1,t)^{n}  \cdot 
\det{}^r_{\Gamma_{\psi_{\beta_+}}}\left (
\overline{\mathcal{B}}^{(2)}_{t,Q_{\beta_+}}
(\beta_+) - \Id^{\oplus n}
\right )
\cdot \frac{1}{t}\\
&= 	\det{}^r_{\Gamma_{\psi_{\beta_+}}}\left (
\overline{\mathcal{B}}^{(2)}_{t,Q_{\beta_+}\circ\iota_{\F_n}}
(\beta) - \Id^{\oplus (n-1)}
\right ) \cdot 
 \frac{1}{t} \max(1,t)^{n+1}.
\end{align*}
\end{itemize}
\end{proof}

\begin{remark}
	One can compare the techniques used in the proof of Proposition \ref{prop:second} with  those used in \cite[Section 3.3]{Bi} and \cite[Section 3.4.1]{KT}. In particular, the construction of the matrix $\mathfrak{G}$ in the previous proof resembles the simpler ones $C$ of \cite[Section 3.3]{Bi}  and $C_n$ of \cite[Section 3.4.1]{KT}, which makes sense since their properties all come from the fundamental formula of Fox calculus.
\end{remark}

\subsection{Counter-examples to Markov invariance}\label{sec:counter:ex}

Theorem \ref{thm:burau:alexander:twisted} established that $F_\QQ$ is a Markov function when the Markov-admissible family of epimorphisms $\QQ$ descends to the link groups or lower. It is now natural to ask if those families $\QQ$ are the only ones for which $F_\QQ$ is a Markov function.
We do not have an answer to this question at the time of writing.

However, by looking at the details of the proofs of Propositions \ref{prop:first} and \ref{prop:second}, it appears that applying the epimorphism $\gamma_\beta$ (or a strictly deeper epimorphism) is necessary in order to obtain the cancellations in the matrices that yield Markov invariance for $F_\QQ$. As additional evidence for this hypothesis, we discovered two families $\QQ$ such that $F_\QQ$ is not a Markov function:
one family (the identities of the free group) lives strictly higher than the $\gamma_\beta$, and the other one (the abelianizations of the free groups) is not comparable to the $\gamma_\beta$ (Figure \ref{fig:epimorphisms} can help visualizing this). See the following Theorems \ref{thm:contrex:abel} and \ref{thm:contrex:id}.

These two counter-examples illustrate some of the difficulties in computing general Fuglede--Kadison determinants, and some transversal techniques one might need to use in order to do so (techniques such as computation of Mahler measures of polynomials, or combinatorics of closed paths on Cayley graphs). Notably, the proof of Theorem \ref{thm:contrex:id} uses   the new values for Fuglede--Kadison determinants over free groups computed in \cite[Theorem 1.2]{BA6} via combinatorial and analytical techniques.

Of course, appearances might still deceive, and it could happen that unexpected identities of Fuglede--Kadison determinants occur even with families of epimorphisms not encompassed by Theorem \ref{thm:burau:alexander:twisted}, especially since computing such determinants remains a daunting task today.

In the following theorem, we prove that when the family  $\QQ$ descends to the free abelian groups, $F_\QQ$ is not a Markov function and thus  cannot yield link invariants. The proof uses properties of the Fuglede--Kadison determinant and a value of the Mahler measure due to Boyd \cite{Bo}.

\begin{theo}\label{thm:contrex:abel}
For the family of abelianizations $\mathcal{Q} =\left \{ \varphi_{n(\beta)}\colon \F_{n(\beta)} \twoheadrightarrow \Z^{n(\beta)} \right \}$, the value at $t=1$ of the function $F_{\QQ}$ is not invariant under Markov moves.	In particular $F_{\QQ}$ is not a Markov function.
\end{theo}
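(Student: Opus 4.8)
The plan is to exhibit two explicit braids $\beta_1, \beta_2$ with isotopic closures (i.e. related by Markov moves) on which $F_{\QQ}$ takes different values at $t=1$. Since Markov invariance would force $F_{\QQ}(\beta_1)$ and $F_{\QQ}(\beta_2)$ to be equal as selfmaps of $\R_{>0}$ up to a monomial factor $t^m$, and since at $t=1$ every such monomial equals $1$, it suffices to compute the two numbers $\det^r_{\Z^{n_i}}\bigl(\overline{\mathcal{B}}^{(2)}_{1,\varphi_{n_i}}(\beta_i) - \Id^{\oplus(n_i-1)}\bigr)$ and see that they disagree. The natural choice is to take $\beta_1$ to be a braid whose closure is the unknot, so that the second Markov move (stabilization) relates it to a braid $\beta_2$ on one more strand; the unknot side will be a trivial computation, while the stabilized side will produce a genuinely two-variable object.

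First I would set up the computation on the trivial side: for the $1$-strand or $2$-strand braid representing the unknot, $\overline{\mathcal{B}}^{(2)}_{1,\varphi}(\beta_1)$ is either empty or a small explicit matrix, and $\det^r$ of the relevant operator is easily seen to be $1$ (or some simple power of the relevant generators, which is $1$ in the FK determinant by Proposition \ref{prop:detFK:properties}~(5) applied at $t=1$, since $\max(1,|t|)=1$). Second, I would compute the stabilized braid's reduced $L^2$-Burau matrix using the explicit formulas for $\overline{\mathcal{B}}^{(2)}_{t,\gamma}(\sigma_i^{\pm1})$ given in the preliminaries, specialized to $\gamma = \varphi_n$ the abelianization, so that the right-multiplication operators $R_{\varphi_n(g_{i+1}g_i^{-1})}$ become multiplication by single generators $X_j$ of $\Z^n$. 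After subtracting $\Id^{\oplus(n-1)}$ and taking $\det^r$, Proposition \ref{prop:detFK:properties}~(7) identifies the result with the Mahler measure of an explicit Laurent polynomial $P \in \C[X_1^{\pm1},\dots,X_n^{\pm1}]$. Third, I would arrange (by choosing $\beta_1$ to be, e.g., a suitable power of $\sigma_1$ or a simple word whose closure is the unknot, then stabilizing once or twice) that $P$ is — after discarding unit factors, which have Mahler measure $1$ — equivalent to the polynomial $1+X+Y$, whose Mahler measure is Boyd's value $e^{\frac{1}{\pi}\Im\mathrm{Li}_2(e^{i\pi/3})} = 1.38135\ldots \neq 1$, recorded in Example~\ref{ex:mahler:boyd}. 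This gives $F_{\QQ}(\beta_1)(1) = 1 \neq 1.38\ldots = F_{\QQ}(\beta_2)(1)$, contradicting Markov invariance.

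The main obstacle is the bookkeeping in the third step: one must choose the braids cleverly enough that the Laurent polynomial extracted from $\det^r(\overline{\mathcal{B}}^{(2)}_{1,\varphi}(\beta_2) - \Id)$ reduces, after pulling out monomial units and applying the $2\times 2$ trick (Proposition \ref{prop:detFK:properties}~(6)) and block-triangular reductions (Proposition \ref{prop:detFK:properties}~(2)) to cancel the "trivial" part of the matrix, to exactly the trinomial $1+X+Y$ (or another polynomial of known non-trivial Mahler measure). A secondary subtlety is ensuring the operators involved are injective so that the regular determinant $\det^r$ behaves multiplicatively and agrees with the classical FK determinant on the pieces; here Remark~\ref{rem:atiyah} saves us, since $\Z^n$ satisfies the strong Atiyah conjecture, so right multiplication by any nonzero element of $\C\Z^n$ is injective, and one only needs to check that the polynomial $P$ is not identically zero. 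Finally, one must double-check that the denominators $\max(1,t)^n$ in the definition of $F_{\QQ}$ contribute only a factor of $1$ at $t=1$ and hence do not interfere with the comparison.
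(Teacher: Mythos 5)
Your proposal takes essentially the same route as the paper's proof: the paper instantiates your plan with $\beta=\sigma_1^{-1}\in B_2$ (closure the unknot, giving $F_{\QQ}(\beta)(1)=1$ by Proposition \ref{prop:detFK:properties}~(5)) and its stabilization $\beta_+=\sigma_1^{-1}\sigma_2\in B_3$, whose determinant at $t=1$ is reduced via the $2\times 2$ trick, the induction property and Proposition \ref{prop:detFK:properties}~(7) to $\mathcal{M}(1+X+Y)=1.38135\ldots\neq 1$ from Example \ref{ex:mahler:boyd}. The one step you leave open — choosing the braid so that the extracted Laurent polynomial is exactly $1+X+Y$ — is settled by precisely this minimal choice, so the argument goes through as you outlined.
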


\begin{proof}
Let
$t>0, \ n=2, \ \beta = \sigma_1^{-1} \in B_2$, and  $\beta_+ =  \sigma_1^{-1} \sigma_2\in B_3$.
Following Section \ref{subsec:braid}, we compute that $\beta_+$ acts on $\F_3$ by
$$\left \{ \begin{matrix}
g_1 \\ g_2
\end{matrix}\right . \overset{h_{\beta_+}}{\mapsto}
\left \{ \begin{matrix}
g_1^{-1} g_2 \\ g_3 g_2^{-1}g_1^{-1} g_2
\end{matrix},\right .
$$
where $g_1, g_2, g_3$ denote the generators as in Section \ref{subsec:braid}.
Thus Fox calculus gives:
$$\overline{\mathcal{B}}^{(2)}_{t,id}(\beta_+) - \Id^{\oplus 2} =
\begin{pmatrix}
-\frac{1}{t} R_{g_1^{-1}} - \Id & -R_{g_3 g_2^{-1}g_1^{-1}} \\
\frac{1}{t} R_{g_1^{-1}} & -\Id - t R_{g_3 g_2^{-1}} + R_{g_3 g_2^{-1}g_1^{-1}}
\end{pmatrix}.
$$

Hence, from Remark \ref{rem:atiyah} and Proposition \ref{prop:detFK:properties} (6), we have:
\begin{align*}
&\det{}_{\F_3}\left (\overline{\mathcal{B}}^{(2)}_{t,id}(\beta_+) - \Id^{\oplus 2}\right ) \\
& = \det{}_{\F_3}\left (
\left ( -\Id - t R_{g_3 g_2^{-1}} + R_{g_3 g_2^{-1}g_1^{-1}}\right )
\left ( -R_{g_1 g_2 g_3^{-1}}\right )
\left (-\frac{1}{t} R_{g_1^{-1}} - \Id\right )
-\frac{1}{t} R_{g_1^{-1}} 
\right )\\
&=\det{}_{\F_3}\left (
-\left (
{t} R_{g_1}+R_{g_1 g_2 g_3^{-1}}+ \frac{1}{t} R_{g_2 g_3^{-1}}
\right )
\right )\\
& =\frac{1}{t}\det{}_{\F_3}\left (
\Id + t R_{g_1} + t^2 R_{g_1 g_3 g_2^{-1}}
\right ).
\end{align*}

Let $z_1,z_2,z_3$ denote the canonical generators of $\Z^3$. Then it follows from the same arguments as in the previous computation that:
$$\det{}_{\F_3}\left (\overline{\mathcal{B}}^{(2)}_{t,\varphi_3}(\beta_+) - \Id^{\oplus 2}\right ) = \frac{1}{t}\det{}_{\Z^3}\left (
\Id + t R_{z_1} + t^2 R_{z_1 z_3}
\right ).$$
Let $H$ be the subgroup of $\Z^3$ isomorphic to $\Z^2$ and generated by $\lambda=z_1$ and $\mu=z_1 z_3$. Recall that $F_{\QQ}(\beta_+)$ is an equivalence class of functions of $t>0$ up to multiplication by a monomial, thus the value at $t=1$ is always the same regardless of the representant of the equivalence class. Let us denote this value $F_{\QQ}(\beta_+)(1)$. We then have:
\begin{align*}
F_{\QQ}(\beta_+)(1)&=\det{}_{\Z^3}\left (
\Id +  R_{z_1} + R_{z_1 z_3}
\right )\\
&=\det{}_{H}\left (
\Id + R_{\lambda} + R_{\mu}
\right ) \\
&= \mathcal{M}(1+X+Y) = 1.38135...  \neq 1.
\end{align*}
where $\mathcal{M}$ is the Mahler measure, the second equality follows from Proposition \ref{prop:detFK:properties} (3), the third one from Proposition \ref{prop:detFK:properties} (7) and the fourth one  from
Example \ref{ex:mahler:boyd}.

Now, since by Proposition \ref{prop:detFK:properties} (5) we have:
$$F_{\QQ}(\beta)(1)= \det{}_\Z\left (-R_{\varphi_2(g_1^{-1}g_2)}-\Id\right )=
1,$$ we conclude that $\beta \mapsto F_{\QQ}(\beta)(1)$ is not invariant under Markov moves.
\end{proof}

We cannot expect Markov invariance by remaining at the level of the free group either, as the following theorem shows:

\begin{theo}\label{thm:contrex:id}
	For the family of identities $\mathcal{Q} =\{ \id_{\F_{n(\beta)}} \}$, the function $F_{\QQ}$ is not invariant under  Markov moves.	
\end{theo}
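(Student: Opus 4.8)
The plan is to run exactly the argument of the proof of Theorem~\ref{thm:contrex:abel}, keeping the same braids $\beta = \sigma_1^{-1} \in B_2$ and $\beta_+ = \sigma_1^{-1}\sigma_2 \in B_3$ (so that $\widehat{\beta_+}$ is a Markov stabilization of $\hat\beta$, and Markov invariance of $F_{\mathcal{Q}}$ would force $F_{\mathcal{Q}}(\beta)(1) = F_{\mathcal{Q}}(\beta_+)(1)$), but now evaluating the relevant Fuglede--Kadison determinant over the free group $\F_3$ instead of over the abelianization $\Z^3$. Since $\F_3$ satisfies the strong Atiyah conjecture (Remark~\ref{rem:atiyah}), all the operators involved are injective, $\det^r_{\F_3}$ and $\det_{\F_3}$ agree on them, and the manipulations of Proposition~\ref{prop:detFK:properties} apply just as in the abelian case.

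Concretely, I would first reuse \emph{verbatim} the Fox-calculus computation already performed in the proof of Theorem~\ref{thm:contrex:abel}, which gives
$$\det{}_{\F_3}\!\left(\overline{\mathcal{B}}^{(2)}_{t,\id}(\beta_+) - \Id^{\oplus 2}\right) \;=\; \frac{1}{t}\,\det{}_{\F_3}\!\left(\Id + t R_{g_1} + t^2 R_{g_1 g_3 g_2^{-1}}\right).$$
The new input is the following observation: the elements $g_1 = x_1$ and $g_1 g_3 g_2^{-1}$ have $\Z^3$-images $(1,0,0)$ and $(1,1,1)$, which are $\Z$-independent, so $g_1$ and $g_1 g_3 g_2^{-1}$ do not lie in a common cyclic subgroup and therefore freely generate a rank-$2$ free subgroup $\langle g_1, g_1 g_3 g_2^{-1}\rangle \cong \F_2$ of $\F_3$. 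Applying the induction property of Proposition~\ref{prop:detFK:properties}~(3) to the inclusion of this subgroup yields
$$\det{}_{\F_3}\!\left(\Id + t R_{g_1} + t^2 R_{g_1 g_3 g_2^{-1}}\right) \;=\; \det{}_{\F_2}\!\left(\Id + t R_a + t^2 R_b\right),$$
where $a,b$ now denote a free basis of an abstract $\F_2$. Specializing to $t=1$ and invoking \cite[Theorem~1.2]{BA6} — which computes the Fuglede--Kadison determinant of such elements of $\C\F_2$ via a combinatorial analysis of closed paths on the Cayley graph of $\F_2$ — we get $\det_{\F_2}(\Id + R_a + R_b) = C$ for an explicit constant $C$, the crucial point being that $C \neq 1$ (and, incidentally, that $C$ differs from the abelian value $\mathcal{M}(1+X+Y) = 1.38135\ldots$ of Theorem~\ref{thm:contrex:abel}). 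Since the factors $1/t$ and $\max(1,t)^{3}$ are both $1$ at $t=1$, this gives $F_{\mathcal{Q}}(\beta_+)(1) = C$.

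On the other side, $\overline{\mathcal{B}}^{(2)}_{t,\id}(\sigma_1^{-1})$ is the $1\times 1$ operator $-t^{-1}R_{g_1^{-1}}$, coming from $\partial h_{\sigma_1^{-1}}(g_1)/\partial g_1 = -g_1^{-1}$. Writing $\overline{\mathcal{B}}^{(2)}_{t,\id}(\sigma_1^{-1}) - \Id = -\bigl(\Id - (-t^{-1})R_{g_1^{-1}}\bigr)$ and using that $g_1$ has infinite order in $\F_2$, Proposition~\ref{prop:detFK:properties}~(1),(5),(9) give $\det^r_{\F_2}\!\left(\overline{\mathcal{B}}^{(2)}_{t,\id}(\sigma_1^{-1}) - \Id\right) = \max(1,1/t)$, whence $F_{\mathcal{Q}}(\beta)(1) = \max(1,1)/\max(1,1)^2 = 1$. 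Comparing, $1 = F_{\mathcal{Q}}(\beta)(1) \neq C = F_{\mathcal{Q}}(\beta_+)(1)$, which is incompatible with invariance under the second Markov move, so $F_{\mathcal{Q}}$ is not a Markov function.

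The main obstacle is precisely the evaluation $\det_{\F_2}(\Id + R_a + R_b) \neq 1$: in the abelian setting of Theorem~\ref{thm:contrex:abel} this was handled by Proposition~\ref{prop:detFK:properties}~(7), turning the determinant into a Mahler measure computed by Boyd, whereas over the non-abelian group $\F_2$ there is no such shortcut and one genuinely has to import the combinatorial/analytic machinery of \cite{BA6} in order to obtain a value — and, crucially, a value that can be certified to be different from $1$. The remaining steps (the free-subgroup claim, the reduction by induction, and the bookkeeping of the monomial equivalence classes and of the $\det^r$ versus $\det$ distinction) are routine once that value is in hand.
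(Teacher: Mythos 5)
Your proposal is correct and follows essentially the same route as the paper's own proof: reuse the Fox-calculus computation from Theorem \ref{thm:contrex:abel} to reduce $F_{\QQ}(\beta_+)(1)$ to $\det_{\F_3}\left(\Id + R_{g_1} + R_{g_1 g_3 g_2^{-1}}\right)$, pass by induction (Proposition \ref{prop:detFK:properties} (3)) to the rank-two free subgroup freely generated by $g_1$ and $g_1 g_3 g_2^{-1}$, and invoke \cite[Theorem 1.2]{BA6}, which gives the value $2/\sqrt{3} \neq 1$, while $F_{\QQ}(\beta)(1)=1$. Your abelianization argument for the free-basis claim (which the paper merely asserts) and your bookkeeping of the $t=1$ evaluation are both fine.
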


\begin{proof}
Let us take $t=1, \ n=2, \ \beta = \sigma_1^{-1} \in B_2, \ \beta_+ = \sigma_1^{-1}  \sigma_2\in B_3$. Then, as in the proof of Theorem \ref{thm:contrex:abel}, we obtain
\begin{align*}
F_{\QQ}(\beta_+)(1)&=\det{}_{\F_3}\left (\mathcal{B}^{(2)}_{1,id}(\beta_+) - \Id^{\oplus 2}\right ) \\
&=\det{}_{\F_3}\left (
\Id +  R_{g_1} + R_{g_1 g_3 g_2^{-1}}
\right )\\
&=\det{}_{F}\left (
\Id +  R_{x} + R_{y}
\right ),
\end{align*}
where $F$ is the free group on two generators $x,y$ that embeds in $\F_3$ via $x \mapsto g_1, y \mapsto g_1 g_3 g_2^{-1}$, and the last equality follows from Proposition \ref{prop:detFK:properties} (3).
Now, since 
 $$F_{\QQ}(\beta)(1)= \det{}_{\F_2}\left (-R_{g_1^{-1}g_2}-\Id\right )=
1,$$ it remains to prove that 
$\det_{F}\left (
\Id +  R_{x} + R_{y}
\right ) \neq  1$. This follows from \cite[Theorem 1.2]{BA6}, which establishes that
$$\det{}_{F}\left (
\Id +  R_{x} + R_{y}
\right ) = \dfrac{2}{\sqrt{3}} \neq 1.$$
\end{proof}

\section*{Acknowledgements}
The author was supported by the FNRS in his "Research Fellow" position at UCLouvain, under Grant
no. 1B03320F. We thank Cristina Anghel-Palmer, Anthony Conway, Jacques Darné and Wolfgang Lück for helpful discussions, and the anonymous referee for useful suggestions.


\begin{thebibliography}{00}
	
	
		\bibitem{BA} F.~Ben Aribi, \emph{A study of properties and computation techniques of the $L^2$-Alexander
		invariant in knot theory}, PhD thesis, Université Paris Diderot, Paris, 2015.
	%
\bibitem{BA2} F.~Ben Aribi, \emph{Gluing formulas for the $L^2$-Alexander torsions}  Commun. Contemp. Math. 21 (2019), no. 3, 1850013, 31 pp.
	%
	\bibitem{BA56} F.~Ben Aribi, \emph{Markov moves, $L^2$-Burau maps and Lehmer's constants}, arXiv 2101.01678v3.
	%
		\bibitem{BA6} F.~Ben Aribi, \emph{Fuglede--Kadison determinants over free groups and Lehmer's constants}, arXiv 2202.03877.
	%
	\bibitem{BAC} F.~Ben Aribi and A.~Conway, \emph{$L^2$-Burau maps and $L^2$-Alexander torsions} (2018), Osaka J.Math. 55,
	529--545.
	%
	\bibitem{Bi}
	J. Birman, \emph{Braids, links, and mapping class groups},
	Annals of Mathematics Studies, No. 82. Princeton University Press, Princeton, N.J.; University of Tokyo Press, Tokyo, 1974. ix+228 pp. 
	\bibitem{Bo}
	D. Boyd, {\em Speculations concerning the range of Mahler's measure},  Canad. Math. Bull. 24 (1981), no. 4, 453--469. 
\bibitem{Bu}
W. Burau, \emph{Über Zopfgruppen und gleichsinnig verdrillte Verkettungen}, Abh. Math. Sem. Univ. Hamburg
11 (1935), 179--186.
%
	\bibitem{DFL}
	J. Dubois, S. Friedl and W. L\"uck, {\em
		The $L^2$-Alexander torsion of 3-manifolds},
	Journal of Topology 9, No. 3 (2016), 889--926. 
	\bibitem{Fox}
R.H. Fox, \emph{Free differential calculus. II. The isomorphism problem of groups}, Ann. of Math. (2), 59 (1954), 196--210.
	%
	\bibitem{HL}
	V. Huynh and T.T.Q. Le, \emph{On the Colored Jones Polynomial and the Kashaev invariant}, Fundam. Prikl. Mat.  11  (2005),  no. 5, 57--78.
	%
	\bibitem{KT}
	C. Kassel and V. Turaev, \emph{Braid groups}, Graduate Texts in Mathematics, 247. Springer, New York, 2008. xii+340 pp.
	%
	\bibitem{LZ} W. Li and W. Zhang, \emph{An $L^2$-Alexander invariant for knots}, Commun. Contemp. Math. \textbf{8} (2006), no. 2, 167--187.
	%
		\bibitem{Liu}
	Y. Liu, \emph{Degree of $L^2$-Alexander torsion for 3-manifolds}, Invent. Math. 207 (2017), 981--1030.
	%
	\bibitem{Lu} W. L\"uck, \emph{$L^2$-invariants: theory and applications to geometry and $K$-theory}, Ergebnisse der Mathematik und ihrer Grenzgebiete. 3. Folge. A Series of Modern Surveys in Mathematics, 44. Springer-Verlag, Berlin, 2002.

	%
		\bibitem{LS}
	W. Lück and T. Schick, \emph{$L^2$-torsion of hyperbolic manifolds of finite volume}, Geom. Funct. Anal. 9 (1999), no. 3, 518--567.
\end{thebibliography}
\end{document}